\documentclass[11pt,reqno]{amsart}
\usepackage{amssymb,amsfonts, amsthm, amsmath}
\usepackage{etaremune}
\usepackage{enumerate}
\usepackage{xcolor}
\usepackage[utf8]{inputenc}
\usepackage{color}
\usepackage{comment}
\usepackage{hyperref}
\usepackage{mathtools}
\usepackage[margin=1.25in]{geometry}
\usepackage[normalem]{ulem}
\usepackage{mathrsfs}
\usepackage{todonotes}

 \usepackage{xcolor}
\usepackage{physics}
\usepackage{amsmath}
\usepackage{tikz}
\usepackage{mathdots}
\usepackage{yhmath}
\usepackage{cancel}
\usepackage{color}
\usepackage{siunitx}
\usepackage{array}
\usepackage{multirow}
\usepackage{amssymb}
\usepackage{gensymb}
\usepackage{tabularx}
\usepackage{extarrows}
\usepackage{booktabs}
\usetikzlibrary{fadings}
\usetikzlibrary{patterns}
\usetikzlibrary{shadows.blur}
\usetikzlibrary{shapes}

\newtheorem{theorem}{Theorem}[section]
\newtheorem*{theorem*}{Theorem}
\newtheorem{corollary}[theorem]{Corollary}

\newtheorem{prop}[theorem]{Proposition}
\newtheorem{lemma}[theorem]{Lemma}
\theoremstyle{definition}

\newtheorem{remark}[theorem]{Remark}

\newtheorem{claim}{Claim}[section]

\newcommand\pd[2]{\frac{\partial #1}{\partial #2}}

\newcommand{\ft}{\mathfrak{t}}

\newcommand\Rb[1]{\mathbb{R}^{#1}}

\numberwithin{equation}{section}
\numberwithin{figure}{section}

\newcommand{\cC}{\mathcal{C}}
\newcommand{\cD}{\mathcal{D}}
\newcommand{\cH}{\mathcal{H}}

\newcommand{\cE}{\mathcal{E}}

\newcommand{\bx}{\mathbf{x}}
\newcommand{\by}{\mathbf{y}}
\newcommand{\bF}{\mathbf{F}}
\newcommand{\cF}{\mathcal{F}}
\newcommand{\bOh}{\mathbf{0}}
\newcommand{\bH}{\mathbf{H}}
\DeclareMathOperator{\Gap}{Gap}
\DeclareMathOperator{\Graph}{graph}
\DeclareMathOperator{\tra}{tr}
\newcommand{\cM}{\mathcal{M}}

\newcommand{\RR}{\mathbb{R}}
\newcommand{\NN}{\mathbb{N}}

\DeclareMathOperator{\supp}{supp}
\DeclareMathOperator{\sing}{sing}
\DeclareMathOperator{\Id}{Id} 
\DeclareMathOperator{\topint}{int}

\title[MCF from conical singularities]{Mean curvature flow from conical singularities}
\author{Otis Chodosh}
\address{Department of Mathematics, Bldg.\ 380, Stanford University, Stanford, CA 94305, USA}
\email{ochodosh@stanford.edu}
\author{J. M. Daniels-Holgate}
\address{Einstein Institute of Mathematics, Edmond J. Safra Campus, The Hebrew University of Jerusalem, Givat Ram. Jerusalem, 9190401, Israel} 
\email{joshua.daniels-holgate@mail.huji.ac.il} 
\author{Felix Schulze}
\address{Department of Mathematics, Zeeman Building, University of Warwick, Gibbet Hill Road, Coventry CV4 7AL,
UK}

\email{felix.schulze@warwick.ac.uk} 

\begin{document}

\maketitle
\begin{abstract}
 We prove Ilmanen's resolution of point singularities conjecture by establishing short-time smoothness of the level set flow of a smooth hypersurface with isolated conical singularities. This shows how the mean curvature flow evolves through asymptotically conical singularities. 

Precisely, we prove that the level set flow of a smooth hypersurface $M^n\subset \mathbb{R}^{n+1}$, $2\leq n\leq 6$, with an isolated conical singularity is modeled on the level set flow of the cone. In particular, the flow fattens (instantaneously) if and only if the level set flow of the cone fattens. 
\end{abstract}

\section{Introduction}

 A family of smooth hypersurfaces $M(t)$ is a mean curvature flow if 
\[
(\tfrac{\partial}{\partial t}\bx)^\perp = \bH_{M(t)}(\bx),
\]
where $\bH_{M(t)}(\bx)$ is the mean curvature vector of $M(t)$ at $\bx$. Mean curvature flow is the gradient flow of area. We recall that the mean curvature flow, $M(t)$, from a smooth, compact hypersurface $M(0)\subset \RR^{n+1}$  is guaranteed to become singular in finite time, moreover, well-posedness and regularity of the flow can break down after the onset of certain singularities (cf.\ \cite{White:ICM}). 

In the present article, we quantify the short-time regularity and well-posedness of the level set flow from a smooth compact hypersurface with an isolated singularity\footnote{For simplicity of notation we only consider a single singularity, but everything here would generalize easily to the case of finitely many isolated singularities, each modeled on a smooth cone.} modeled on any smooth cone $\cC$. Recalling 
\cite{ChoSch}, such hypersurfaces can appear as the singular time-slice of a flow encountering a singularity modeled on an asymptotically conical self-shrinker. Our results hence demonstrate how one can flow through such a singularity.

Before stating our results, we recall that the level set flow (cf.~\cite{OS,CGG,EvansSpruck1,ilmanen}) of a closed set $X$ is the unique maximal assignment of closed sets $t\mapsto F_t(X)$ with $F_0(X) = X$, such that $F_t(X)$ avoids smooth flows (see Section \ref{subsec:level-set-flow}). If the level set flow $F_t(X)$ develops an interior at $t=T$, we say that the flow \emph{fattens} at time $T$.


Our main results can be stated as follows:
\begin{theorem}[Fattening dichotomy]\label{theo:fatt-inf}
For $2\leq n \leq 6$, suppose that $M^n\subset \RR^{n+1}$ is a smooth hypersurface with an isolated conical singularity modeled on a smooth cone $\cC$. Then the level set flow of $M$ fattens instantly if and only if the level-set flow from the cone $\cC$ fattens. 
\end{theorem}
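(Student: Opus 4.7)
\medskip

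\noindent\textbf{Proof proposal.} The plan is to exploit the scale-invariance of $\cC$ to reduce the fattening question for $M$ to one for $\cC$, and then use pseudolocality together with the smoothness of $M$ away from the singular point $p$ to transfer information between the two flows. A key preliminary observation is that, because $\cC$ is a cone, its level set flow satisfies $F_t(\cC) = \sqrt{t}\,F_1(\cC)$ up to a time rescaling, so the cone's flow either has empty interior for all $t > 0$ or nonempty interior for all $t > 0$: fattening of $\cC$ is automatically instantaneous. The theorem thus reduces to matching the dichotomy for $M$ with this alternative for $\cC$.

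First I would record the ``exterior'' input: since $M\setminus\{p\}$ is smooth, classical short-time existence produces a smooth mean curvature flow $M_{\mathrm{ext}}(t)$ of $M\setminus B_\delta(p)$ on some interval $[0,\tau(\delta)]$, and pseudolocality (in the Ilmanen--White form) keeps it a Lipschitz graph over $M\setminus B_{2\delta}(p)$ for a time that does not degenerate as $\delta \to 0$. On the ``interior'' side, parabolic rescalings of $M$ at $p$ converge smoothly and locally to $\cC$; hence any weak flow from $M$, blown up at $(p,0)$, produces (by Brakke/Ilmanen compactness) a weak flow from $\cC$ which, by self-similarity, is a self-expander out of $\cC$.

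For the easier direction ($\cC$ does not fatten $\Rightarrow$ $M$ does not fatten instantly): when $\cC$ does not fatten, its unique weak flow is a self-similarly expanding Brakke flow $t\mapsto\sqrt{t}\,E$ out of $\cC$, where $E$ is an asymptotically conical self-expander asymptotic to $\cC$. I would construct a candidate flow from $M$ by taking $M_{\mathrm{ext}}(t)$ outside $B_\delta(p)$ and the rescaled expander $\sqrt{t}E$ inside, and then argue (using the avoidance principle against smooth barriers and the blow-up characterization of the tangent flow at $p$) that any two weak set-theoretic subsolutions from $M$ must coincide with this candidate; in particular the level set flow has empty interior.

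For the harder direction ($\cC$ fattens $\Rightarrow$ $M$ fattens instantly): fattening of $\cC$ produces two distinct expanding Brakke flows $t\mapsto \sqrt{t}E_\pm$ out of $\cC$ (e.g.\ the inner and outer expanders on the two sides of the fattening region). I would use each $E_\pm$ as the ``interior piece'' and glue it to the common exterior flow $M_{\mathrm{ext}}(t)$ via a parabolic cut-off/barrier argument to manufacture two distinct weak flows $\mathcal{M}_\pm$ from $M$. Distinctness of $\mathcal{M}_+$ and $\mathcal{M}_-$ would be checked by a blow-up: if $\mathcal{M}_+(t) = \mathcal{M}_-(t)$ for some arbitrarily small $t$, then parabolically rescaling at $(p,0)$ would force $E_+ = E_-$, a contradiction. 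Having two distinct weak flows from $M$ then forces instantaneous fattening of the level set flow of $M$.

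The main obstacle I anticipate is the matching/gluing step in both directions: establishing that the smooth exterior evolution $M_{\mathrm{ext}}(t)$ and the self-expander $\sqrt{t}E$ (or $\sqrt{t}E_\pm$) are compatible as a single weak flow in the annular overlap $B_{2\delta}\setminus B_\delta$, and, equivalently, that any weak flow from $M$ has $\sqrt{t}E$ (resp.\ $\sqrt{t}E_\pm$) as its unique tangent flow at $(p,0)$. This presumably requires a quantitative uniqueness/stability result for asymptotically conical self-expanders asymptotic to $\cC$ (existence, foliation properties, and a suitable avoidance principle), and is where the assumption $2\le n\le 6$ enters through regularity of minimal cones.
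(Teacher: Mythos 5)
Your overall architecture (reduce to the behaviour at the conical point, use the inner/outer expanders of $\cC$, handle the exterior by pseudolocality, and deduce fattening from distinctness of the inner and outer evolutions) matches the paper's, but there is a genuine gap at the central step. You assert that any weak flow from $M$, parabolically rescaled at $(p,0)$, limits to a weak flow from $\cC$ ``which, by self-similarity, is a self-expander out of $\cC$.'' This does not follow. Forward blow-ups are not controlled by Huisken's monotonicity formula (which only governs \emph{backward} blow-ups/tangent flows), and a weak flow emanating from a cone need not be self-similar: non-self-expanding flows out of cones exist (see the discussion of \cite{chen:existunique} and \cite[p.~25]{Trieste} in the paper). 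The paper explicitly identifies the absence of a usable forward monotonicity formula as the main obstruction and designs its entire argument to avoid it. So the step on which both of your directions rest — that the tangent object at $(p,0)$ is an expander, let alone the \emph{outermost} expander $\Sigma$ (resp.\ $\Sigma'$) — is exactly what needs to be proved, and your proposal does not supply a mechanism for it.

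The paper's mechanism, which is missing from your sketch, is a barrier construction exploiting two spectral facts about the outermost expander $\Sigma$: (i) one-sided minimization of the expander functional makes $L_\Sigma\geq 0$, giving a positive first eigenfunction $\phi_{3R}$ with eigenvalue $\mu_{3R}>0$ on $\Sigma\cap B_{3R}$; (ii) the one-sided foliation by expanders asymptotic to perturbed cones gives a positive Jacobi field $v$ with linear growth. The function $f=v+\alpha\phi_{3R}$ satisfies $L_\Sigma f=-\alpha\mu_{3R}\phi_{3R}<0$, so its graph is a strict supersolution; welding it (Meeks--Yau) to the graph of a small constant $h$ (which is a supersolution far out since $|A_\Sigma|^2-\tfrac12<0$ there) produces a global barrier $\Gamma_s$ that forces the forward blow-up of the outer flow onto $\Sigma$ (Proposition~\ref{prop:fattening-barrier-final}, Theorem~\ref{theo:fattening}). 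The same $f$, grafted onto a normal graph of $sh\sqrt{t}$ over a given flow $\cM^1(t)$ away from $B_{\sqrt{t}R}$, pinches any second flow $\cM^2$ with the same expander asymptotics onto $\cM^1$ as $s\to 0$ (Proposition~\ref{prop:uniqueness}), which is what makes your ``argue that any two weak subsolutions coincide'' step rigorous in the non-fattening direction. Without these barriers, the avoidance principle against generic smooth barriers neither identifies the forward blow-up nor yields uniqueness. As a smaller point, ``two distinct weak flows from $M$ forces fattening'' should be phrased in terms of the outer and inner flows of $M$ in the Hershkovits--White sense: it is the strict separation of $\partial F_t(V)$ and $\partial F_t(V')$, inherited from $\Sigma\cap\Sigma'=\emptyset$ via the blow-up, that produces an open region inside $F_t(M)$.
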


Fattening of $\cC$ implies fattening of $M$ is proven in Theorem \ref{theo:fattening}, whilst non-fattening of $\cC$ implies short-time non-fattening of $M$ can be found in Corollary \ref{coro:non-fat}.

A fortiori, Theorem \ref{theo:fatt-inf} is a consequence of the following results (more precisely Theorem \ref{theo:structure-theo} and Theorem \ref{theo:uniqueness}), which give a precise description of the level-set flow near the conical singularity of $M$.
\begin{theorem}[Structure theorem for the level set flow]\label{theo:structure-theo}
For $2\leq n \leq 6$, suppose that $M^n\subset \RR^{n+1}$ is a smooth hypersurface with an isolated conical singularity modeled on a smooth cone $\cC$ at $\bOh$. Then, there is a $T>0$ such that the outermost mean curvature flows of $M$ are smooth for $t \in (0,T)$. Moreover, $t^{-1/2}F_t(M)$ converges in the local Hausdorff sense to $F_1(\cC)$ as $t\searrow 0$.
\end{theorem}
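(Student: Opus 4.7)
The strategy rests on the scale invariance $F_t(\cC) = \sqrt{t}\, F_1(\cC)$ of the cone flow, combined with the fact that $M$ is asymptotically conical at $\bOh$. Under parabolic rescaling by $1/\sqrt{t}$, the initial data $M$ converges smoothly, away from $\bOh$, to $\cC$, so the rescaled flow $t^{-1/2} F_t(M)$ should converge to $F_1(\cC)$; smoothness of the outermost flows of $M$ for small $t>0$ should then follow from smoothness of the outermost part of $F_1(\cC)$, which in dimensions $2\leq n\leq 6$ is available from the preceding cone-flow analysis in the paper together with classical low-dimensional regularity for outermost flows.

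\textbf{Barriers and Hausdorff convergence.} For each small $\epsilon>0$, I would build smooth hypersurfaces $\cC^+_\epsilon$ and $\cC^-_\epsilon$ that lie strictly on the outer and inner sides of $M$, respectively, in a neighborhood of $\bOh$. These are obtained by taking a normal perturbation of $\cC$ of size $\epsilon$ and glueing to $M$ itself in an annular transition region whose scale is dictated by the asymptotically conical rate. The avoidance principle for level set flows then sandwiches $F_t(M)$ between $F_t(\cC^\pm_\epsilon)$. On the complement of a tip neighborhood, $F_t(\cC^\pm_\epsilon)$ is governed by classical short-time smooth existence (since $\cC^\pm_\epsilon$ is smooth and compact), while near the tip $\cC^\pm_\epsilon$ agrees with a normal perturbation of $\cC$ and its flow is modeled, after rescaling by $1/\sqrt{t}$, on a perturbation of $F_1(\cC)$. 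Sending $\epsilon\searrow 0$ pinches both barriers to $F_1(\cC)$, giving the stated convergence $t^{-1/2}F_t(M)\to F_1(\cC)$ in the local Hausdorff sense.

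\textbf{Smoothness of the outermost flows.} Away from the tip, standard short-time existence for smooth compact initial data yields a smooth evolution of $M\setminus B_r(\bOh)$ for $t\in[0,cr^2)$, so it suffices to address a parabolic neighborhood of $\bOh$. I would argue there by contradiction and compactness: if the outermost flow of $M$ fails to be smooth at points $(x_k,t_k)$ with $t_k\searrow 0$, then rescaling by $1/\sqrt{t_k}$ and using a suitable compactness theorem for Brakke flows produces a limit weak flow from $\cC$ that is non-smooth at the limit of $x_k/\sqrt{t_k}$. By the sandwich from Step~1, this limit is trapped between the outer and inner barriers, hence must coincide with the corresponding outermost flow of $\cC$—whose smoothness (on its regular outer boundary, in the range $n\leq 6$) contradicts the claimed non-smoothness.

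\textbf{Main obstacle.} The subtlest point is that weak or Hausdorff limits of outermost level set flows are not automatically outermost when fattening can occur, so one cannot naively pass to a rescaled limit and invoke known smoothness of $F_1(\cC)$. The barrier construction is precisely designed to bypass this: one controls $F_t(M)$ from both sides by explicit cone-modeled flows with a common Hausdorff limit, forcing the rescaled limit to be $F_1(\cC)$. Carrying out this glueing quantitatively—so that the perturbations are genuine barriers at every parabolic scale as $t,\epsilon\searrow 0$—is the core technical step and relies crucially on a sharp form of the asymptotically conical decay of $M$ at $\bOh$.
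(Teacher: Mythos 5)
Your overall architecture (sandwich $F_t(M)$ between barriers built from the cone, pass to a rescaled limit, deduce smoothness by compactness) differs from the paper's, and the two places where you defer the work are exactly where the proof lives; as written there are genuine gaps.

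First, the step ``near the tip $\cC^\pm_\epsilon$ agrees with a normal perturbation of $\cC$ and its flow is modeled, after rescaling by $1/\sqrt{t}$, on a perturbation of $F_1(\cC)$; sending $\epsilon\searrow0$ pinches both barriers to $F_1(\cC)$'' is essentially the theorem you are trying to prove, restated for a different initial surface. A normal $\epsilon$-perturbation of $\cC$ glued to $M$ is again a hypersurface whose parabolic blowup at the tip is a priori unknown, and the level set flow is \emph{not} continuous in the initial data precisely in the fattening regime, so there is no reason the flows of $\cC^\pm_\epsilon$ converge to $F_t(\cC)$ as $\epsilon\searrow 0$. The paper avoids this circularity entirely: the barriers are not perturbed cones but static graphs over the \emph{outermost expander} $\Sigma$ of the function $s(v+\alpha\phi_{3R})$ welded (Meeks--Yau) to a constant graph $sh$, where $v$ is the linearly growing positive Jacobi field coming from the one-sided family of expanders for nearby cones and $\phi_{3R}$ is the first Dirichlet eigenfunction with $\mu_{3R}>0$ (available because $\Sigma$ minimizes $\cE$ to one side, hence is stable). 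The term $-L_\Sigma f=\alpha\mu_{3R}\phi_{3R}>0$ makes this a strict supersolution, and the crucial boundary condition is that the outer Brakke flow is trapped in the level set flow of the exterior region $W'$, which decays toward the cone and hence lies below the barrier outside a compact set (Claims 4.3--4.4). This is what pushes the rescaled outer flow onto $\Sigma$ from outside; your cone-perturbation barriers provide no analogous one-sided mechanism.

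Second, the smoothness argument does not close. If $\cC$ fattens, $F_1(\cC)$ is a region with nonempty interior, so ``trapped between the outer and inner barriers, hence must coincide with the corresponding outermost flow of $\cC$'' is a non sequitur: containment in $F_1(\cC)$ constrains only the support of the rescaled limit and does not identify it with the smooth hypersurface $\Sigma$ (or $\Sigma'$). Even once one knows the forward blowup of the outer Brakke flow is \emph{supported} on $t\mapsto\sqrt{t}\Sigma$, one still needs multiplicity one before local regularity gives smoothness of $\cM(t)$ for small $t$; the paper obtains this from the constancy theorem plus Huisken monotonicity and unit regularity of the elliptic-regularization flows. Your proposal works throughout with the level set flow alone and never introduces the Hershkovits--White inner/outer Brakke flows, which is what makes both the density and the multiplicity arguments available. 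Finally, note a smaller issue: since $\cC^\pm_\epsilon$ is glued to $M$ itself, it intersects $M$, so the avoidance principle cannot be applied directly; one must instead phrase the sandwich via inclusion of level set flows of nested closed regions.
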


We provide a refinement of this statement below, which, in aggregate with the aforementioned work \cite{ChoSch}, can be considered as a canonical neighbourhood theorem for asymptotically conical singularities. Before stating this result, we provide a brief exposition of the Hershkovits--White framework applicable to the present context. (See Section \ref{sec:outermost} for a rigorous discussion.)


 \begin{figure}[h]
    \centering

\includegraphics{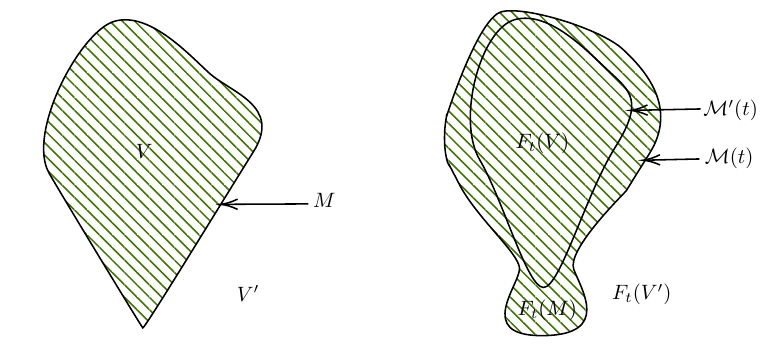}
    \caption{Left: A hypersurface $M$ with isolated conical singularity, with interior $V$ and exterior $V'$. Right: The level set flow of each at time $t$.}
    \label{fig:compact}
\end{figure}

We first consider the compact case, illustrated in Figure \ref{fig:compact}. Recall that the outer flow, $\cM$ is the space-time boundary of the level set flow $F_t(V)$ of the interior $V$ of $M$. Similarly the inner flow $\cM'$ is the space-time boundary of the level set flow $F_t(V')$ of the exterior $V'$ of $M$. Turning to the cone $\mathcal{C}$ as illustrated in Figure \ref{fig:cone}, we note dilation invariance and uniqueness of the level set flow yields $F_t(\cC)=\sqrt{t}F_1(\cC)$. Denote $W$ and $W'$ the interior and exterior of the cone $\cC$ and assume we have chosen these conistently with the interior and exterior of $M$. Let $\Sigma':= \partial F_1(W')$ and $\Sigma:= \partial F_1(W)$ and observe that $\partial F_t(W) = \sqrt{t}\Sigma$, $\partial F_t(W') = \sqrt{t}\Sigma'$. Note, when $2\leq n\leq 6$, $\Sigma,\Sigma'$ will be smooth (this is the source of the dimension restriction above; for $n>6$, the above theorems continue to hold if we impose the additional condition that the outermost expanders for $\cC$ are smooth). 

In the sequel we will refer to $\cM(t),\cM'(t)$ as \emph{outermost} flows and $\Sigma$, $\Sigma'$ as the outermost expanders. The next result shows that the outermost expanders approximate the outermost flows. 

\begin{theorem}[Canonical neighbourhood theorem for outermost flows]\label{theo:exist-inf}
For $2\leq n \leq 6$, suppose that $M^n\subset \RR^{n+1}$ is a smooth hypersurface with an isolated conical singularity at $\bOh$. Assume the conical singularity is modeled on a smooth cone $\cC$ with outermost expanders $\Sigma,\Sigma'$ labeled as above. Then, $t^{-1/2}\mathcal{M}(t)$ (resp.\ $t^{-1/2}\mathcal{M}'(t)$) converges to $\Sigma$ (resp.\ $\Sigma'$) locally smoothly as $t\searrow 0 $.\end{theorem}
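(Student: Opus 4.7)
The plan is to reduce the assertion to a parabolic blow-up analysis at the spacetime origin. Write $\mathcal{M}^{\lambda}(s) := \lambda\,\mathcal{M}(\lambda^{-2}s)$ for the parabolic rescaling, so that $t^{-1/2}\mathcal{M}(t) = \mathcal{M}^{t^{-1/2}}(1)$. The task then reduces to showing that $\mathcal{M}^{\lambda}(s) \to \sqrt{s}\,\Sigma$ locally smoothly on $(0,\infty)\times\mathbb{R}^{n+1}$ as $\lambda\to\infty$, and similarly for $\mathcal{M}'$.

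First I would observe that the rescaled initial data $\lambda M$ converges to the cone $\mathcal{C}$ smoothly on compact subsets of $\mathbb{R}^{n+1}\setminus\{\mathbf{0}\}$ as $\lambda\to\infty$, and that the interior $\lambda V$ converges to $W$ in the Kuratowski sense; this is a direct consequence of the asymptotically conical model at $\mathbf{0}$. By parabolic scaling invariance of the level-set flow and of the Hershkovits--White outer/inner flow construction (Section~\ref{sec:outermost}), the outermost flow of $\lambda M$ at time $s$ is precisely $\mathcal{M}^{\lambda}(s)$.

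Next, I would apply Brakke compactness: uniform local area bounds follow from the monotonicity formula together with the smooth convergence $\lambda M\to\mathcal{C}$ away from $\mathbf{0}$, yielding a subsequential integral Brakke flow limit $\widetilde{\mathcal{M}}$ on $(0,\infty)\times\mathbb{R}^{n+1}$ for any sequence $\lambda_i\to\infty$. To identify $\widetilde{\mathcal{M}}$, I would combine Theorem~\ref{theo:structure-theo}, which already gives Hausdorff convergence of $t^{-1/2}F_t(M)$ to $F_1(\mathcal{C})$, with continuity of the level-set flow of the interior region under Kuratowski convergence of the initial open sets. This forces the spacetime support of $\widetilde{\mathcal{M}}$ to coincide with the outermost flow of $\mathcal{C}$, i.e.\ with the spacetime track of $\sqrt{s}\,\Sigma$. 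Independence of the limit from the chosen subsequence then upgrades the subsequential statement to convergence of the full family.

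Finally, I would promote this weak convergence to local smooth convergence. Since $\Sigma$ is smooth in the range $2\leq n\leq 6$, every spacetime point of $\sqrt{s}\,\Sigma$ has Gaussian density one. Combined with the outermost, hence unit-multiplicity, nature of $\mathcal{M}^{\lambda_i}$, White's Brakke regularity theorem applies in a neighbourhood of each such point, giving local smooth convergence of $\mathcal{M}^{\lambda}$ to $\sqrt{s}\,\Sigma$; specializing to $s=1$ yields the theorem. The principal obstacle I anticipate is the identification step: one must verify that the Brakke limit carries no excess mass beyond the outermost expander, and in particular rule out higher-multiplicity sheets or spurious additional components, which requires showing that the Hershkovits--White outermost flow behaves continuously under the convergence $\lambda M\to\mathcal{C}$ of initial data with Kuratowski-convergent interiors. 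Once this is established, the curvature estimates supplied by Brakke--White regularity handle all remaining analytic bookkeeping.
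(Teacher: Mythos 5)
There is a genuine gap, and it sits exactly where you flag it yourself: the identification step. Your proposed mechanism for identifying the subsequential Brakke limit is (a) an appeal to Theorem \ref{theo:structure-theo} and (b) ``continuity of the level-set flow of the interior region under Kuratowski convergence of the initial open sets.'' Point (a) is circular: in the paper's logic, Theorem \ref{theo:structure-theo} is itself a repackaging of the forward blow-up statement you are trying to prove (both it and Theorem \ref{theo:exist-inf} are deduced from Theorem \ref{theo:fattening} and Corollary \ref{coro:smoothness}), so it cannot be used as input. Point (b) is false in the generality you need: the level set flow is only \emph{semi}-continuous under such convergence of initial data. Avoidance gives one inclusion for free --- any blow-up limit of the outer flow is contained in the level set flow of the blown-up exterior region (this is Claim \ref{claim:blowup-outer-aprooxrs}) --- but the reverse inclusion, namely that the blow-up does not collapse strictly inside the gap region (e.g.\ onto $\Sigma'$, onto some intermediate expander, or with the wrong multiplicity), is precisely the hard content of the theorem. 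Moreover, even granting local Hausdorff convergence of $t^{-1/2}F_t(M)$ to $F_1(\cC)$, Hausdorff convergence of closed sets does not control their boundaries, so it would not single out $\Sigma$ as the limit of the \emph{outer} flow when $\cC$ fattens and $\partial F_1(\cC)=\Sigma\cup\Sigma'$. The introduction's ``Strategy of proof'' explicitly warns that compactness/monotonicity-style arguments of this type are not known to close this loop.

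The paper's resolution, entirely absent from your proposal, is a barrier construction exploiting two spectral facts about the outermost expander: one-sided minimization of $\cE$ gives $L_\Sigma\geq 0$ and hence a positive Dirichlet eigenfunction $\phi_{3R}$ with $\mu_{3R}>0$, and the one-sided family of nearby expanders gives a positive, linearly growing Jacobi field $v$. Welding the graph of $s(v+\alpha\phi_{3R})$ to a small constant graph $sh$ (Lemmas \ref{lemm:subsolns-cross}--\ref{lemm:subsoln-falphafR}, Proposition \ref{prop:fattening-barrier-final}) produces a strict supersolution $\sqrt{t}\,\Gamma_s$ lying on the gap side of $\sqrt{t}\,\Sigma$; since the blow-up must lie in the level set flow of the exterior, which decays toward the cone while $\Gamma_s$ sits at height $\sim sh$ near infinity (Claims \ref{claim:boundary-true-1}, \ref{claim:boundary-true-2}), the maximum principle traps the forward blow-up between $\sqrt{t}\,\Sigma$ and $\sqrt{t}\,\Gamma_s$ and letting $s\to 0$ pins it to $\sqrt{t}\,\Sigma$; multiplicity one then follows from the constancy theorem plus unit regularity, and smoothness from the rescaling/contradiction argument of Corollary \ref{coro:smoothness}. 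Your final Brakke--White regularity step is fine in spirit once multiplicity one on $\sqrt{s}\,\Sigma$ is known, but as written the proof defers its central difficulty to an unproved (and in general untrue) continuity statement.
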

Theorem \ref{theo:exist-inf} resolves the ``resolution of point singularities'' conjecture of Ilmanen \cite[Problem 16]{ilmanen2003problems}. Smoothness can be found in Corollary \ref{coro:smoothness} and the forward blow-up statement (including the convergence of the outermost Brakke flows to the outermost expanders) can be found in Theorem \ref{theo:fattening}.



\begin{figure}[t]
    \centering
\includegraphics{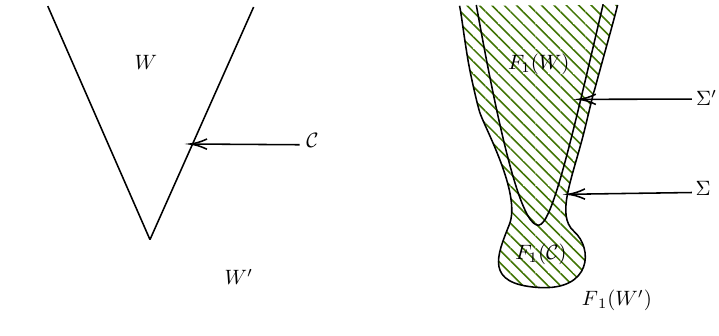}

\caption{Left: The initial cone $\mathcal{C}$ with interior $W$ and exterior $W'$. Right: The level set flow of each region at time $t=1$. }\label{fig:cone}
\end{figure}

Note that if $\cC$ does not fatten then $\Sigma = \Sigma'$ and Theorem \ref{theo:structure-theo} trivially holds. In particular, any flows starting from $M$ are smooth for a short time and modeled on the unique expander asymptotic to $\cC$. This implies that two such flows separate like $o(t^{1/2})$, but this does not a priori imply there is only one such flow. This is the content of our final result.

\begin{theorem}[Uniqueness]\label{theo:uniqueness}
For $2\leq n \leq 6$, suppose that $M^n\subset \RR^{n+1}$ is a smooth hypersurface with an isolated conical singularity at $\bOh$, modeled on a smooth cone $\cC$ which does not fatten. Then, there is $T>0$ such that the outermost flows of $M$ agree (and are smooth) for $t\in (0,T)$. Especially, the evolution of $M$ is unique on this time interval. 
\end{theorem}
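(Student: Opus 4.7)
By Theorems~\ref{theo:structure-theo} and~\ref{theo:exist-inf}, the outermost flows $\cM(t),\cM'(t)$ are both smooth on $(0,T)$ and their parabolic rescalings $t^{-1/2}\cM(t),\,t^{-1/2}\cM'(t)$ converge to the common outermost/innermost expander $\Sigma=\Sigma'$ locally smoothly as $t\searrow 0$. Any weak flow starting from $M$ is sandwiched in the closed region bounded by $\cM(t)\cup\cM'(t)$, so it suffices to show these two smooth flows coincide on some short time interval $(0,T')$; then fattening is ruled out and uniqueness for all Brakke and level-set flows from $M$ follows immediately.

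My plan is to set this up as a backward parabolic uniqueness problem in self-similar coordinates. Passing to $\tau=\log t$ and $y=t^{-1/2}x$, both rescaled outermost flows become smooth solutions $\tilde{\cM}(\tau),\tilde{\cM}'(\tau)$ of the rescaled MCF which approach the stationary solution $\Sigma$ locally smoothly as $\tau\to-\infty$. For $\tau\ll 0$, on large compact subsets of $\Sigma$ each is realized as a normal graph by functions $v(\tau,\cdot),v'(\tau,\cdot)$, with $v,v'\to 0$ smoothly as $\tau\to-\infty$. Subtracting the two graphical rescaled MCF equations, the difference $w:=v-v'$ satisfies a linear parabolic equation on $\Sigma$ of the form
\[
\partial_\tau w = \Delta_\Sigma w - \tfrac12\,y\cdot\nabla_\Sigma w + b(\tau,y)\cdot\nabla_\Sigma w + c(\tau,y)\,w,
\]
whose coefficients $b,c$ are controlled by $v,v'$ and their first two derivatives (hence small by Theorem~\ref{theo:exist-inf}), and with $w\to 0$ locally smoothly as $\tau\to-\infty$. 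Applying a backward uniqueness theorem in the spirit of Lions--Malgrange, or a Carleman estimate tuned to the Gaussian-type weight natural for rescaled MCF, should then force $w\equiv 0$ on an interval $(-\infty,\tau_0]$, yielding $\cM(t)=\cM'(t)$ on $(0,e^{\tau_0}]$.

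The main obstacle is that $\Sigma$ is non-compact, the drift $\tfrac12\,y\cdot\nabla_\Sigma$ grows unboundedly, and the a priori control from Theorem~\ref{theo:exist-inf} is only on compact subsets of $\Sigma$. The remedy I would pursue is a two-scale argument: away from $\bOh$ in the original coordinates, $M$ is smooth and standard short-time uniqueness of MCF for smooth initial data pins $\cM(t)=\cM'(t)$ in the exterior of a small ball, which after rescaling provides boundary data for $w$ on the tail of $\Sigma$; on a large compact portion of $\Sigma$ one then closes a weighted parabolic energy/Carleman estimate in $\tau$ to propagate agreement inward. The delicate step is quantifying the rate at which $v,v'\to 0$ as $\tau\to-\infty$ — e.g., via a {\L}ojasiewicz-type inequality at the expander or a direct spectral analysis of the stability operator $L_\Sigma=\Delta_\Sigma-\tfrac12\,y\cdot\nabla_\Sigma+|A_\Sigma|^2+\tfrac12$ — so that the Carleman weight wins over the polynomial growth of the drift. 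This is essentially a parabolic analogue of Bernstein-type uniqueness for self-expanders, and I expect the argument to hinge on carefully matching these interior and exterior regimes.
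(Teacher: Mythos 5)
Your reduction to showing that two smooth flows with the same forward blowup $\Sigma$ must coincide is the same reduction the paper makes (Proposition \ref{prop:uniqueness}), but the mechanism you propose for closing it has two genuine gaps. First, the claim that ``standard short-time uniqueness of MCF for smooth initial data pins $\cM(t)=\cM'(t)$ in the exterior of a small ball'' is false: mean curvature flow is not localizable in this way. Two global flows agreeing on an open subset of the initial data need not agree there at positive times, because each is influenced by the (different) behavior of the other near $\bOh$; indeed the paper's own far-barrier construction (Lemma \ref{lemm:outer-barrier}) only controls the two flows to within $sh\sqrt{t}$ of each other away from the singularity, never claims exact agreement there. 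So your Carleman scheme has no region where $w$ is known to vanish, and no boundary data to propagate inward from.

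Second, and more fundamentally, backward uniqueness from the sole hypothesis ``$w\to 0$ locally smoothly as $\tau\to-\infty$'' cannot work: the linearized equation $\partial_\tau w=L_\Sigma w$ admits infinitely many nontrivial solutions decaying as $\tau\to-\infty$ (e.g.\ $w=e^{-\mu\tau}\varphi$ for any Dirichlet eigenfunction on a compact piece with $L_\Sigma\varphi=-\mu\varphi$, $\mu<0$), and at the nonlinear level non-uniqueness genuinely occurs when the cone fattens. What rules it out in the non-fattening case is not a unique-continuation phenomenon but the \emph{one-sided} structure of the outermost expander: stability ($L_\Sigma\geq 0$, hence a positive Dirichlet eigenfunction $\phi_{3R}$ with $\mu_{3R}>0$) together with the positive linearly-growing Jacobi field $v$ of Lemma \ref{dsvars}. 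The paper uses these to build strict super/subsolutions $\Gamma^{\pm}_s(t)$ welded around $\cM^1$ (Proposition \ref{prop:uniqueness-barrier}); the distance estimate $\operatorname{dist}(\cM^1(t),\cM^2(t))=o(\sqrt{t})$ (Lemma \ref{lem:distance-estimate}) traps $\cM^2$ between them for small $t$, the avoidance principle keeps it trapped, and letting $s\to 0$ forces $\cM^1=\cM^2$. Your proposal never invokes this sign structure (beyond a passing mention of the stability operator, which you also write with the wrong signs on the drift and zeroth-order terms compared with \eqref{eq:definiton-Jacobi}), so as written it would not distinguish the non-fattening case from the fattening one, where the conclusion is false.
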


Smoothness follows again from Corollary \ref{coro:smoothness} and for uniqueness see Corollary \ref{coro:non-fat}.

\begin{remark}\,
As a consequence of the work of Brendle \cite{Brendle:genus0} any asymptotically conical shrinker must have non-zero genus and by work of Ilmanen--White \cite[p.\ 21]{Trieste} the inner and outer expanders are topological planes. Combined with Chodosh--Schulze \cite[Corollary 1.2]{ChoSch}, the results presented in this work demonstrate strict genus drop through any isolated conical singularities that form in a multiplicity one flow. We note that the full ``strict genus monotonicity conjecture'' of Ilmanen \cite[Problem 13]{ilmanen2003problems} at non-generic singularities (for outermost flows) was recently resolved by Bamler--Kleiner \cite{BamlerKleiner23} by combining their resolution of Ilmanen’s multiplicity one conjecture with the strict genus drop results for one-sided perturbations of Chodosh--Choi--Schulze--Mantoulidis \cite{CCMS:generic1, ccs23}.
\end{remark}

\subsection{Related work} 

The study of fattening and non-fattening of conical singularities has received considerable attention. In particular, in their first work on the level set flow, Evans--Spruck already observed \cite[\S 8.2]{EvansSpruck1} that the cone $\mathcal{C}:=\{xy=0\}\subset \RR^2$ and a figure eight will fatten. Note that a figure eight is a smooth curve in $\RR^2$ with an isolated conical singularity modeled on the cone $\mathcal{C}$ in the terminology of this paper (and our results would apply without change to this setting). Fattening has been subsequently studied by many authors, see \cite{SonSou,DG:new,White:questDG,ilmanen,Trieste,AltAngGig,AngChopIlm,NP:numerics,angenent2002fattening,White:ICM,Helm, DingExp} for a non-exhaustive list. 

More recently, Hershkovits--White \cite{hershwhite} introduced a powerful framework for analysing the level set flow, which they applied to show non-fattening through mean-convex singularities. Combining their work with the resolution of the mean-convex neighborhood conjecture by Choi--Haslhofer--Herskovits \cite{chh18} (cf.\ \cite{ChoiHaslhoferHershkovitsWhite}), it follows that fattening does not occur if all singularities are either round cylinders of the form $\mathbb{S}^{n-1}\times \RR$ or round spheres $\mathbb{S}^{n}$. We also draw attention to the recent studies of asymptotically conical expanders by Deruelle--Schulze \cite{DS20} and Bernstein--Wang \cite{BW:space,BW:compact,BW:mount,BW19,BW:top-unique-expand,BW18}. In particular, Bernstein--Wang have used these results to prove a low-entropy Schoenflies theorem \cite{BW:schoen} (cf.\ \cite{CCMS:generic1,CCMS:low-ent-gen,Daniels-Holgate}) and have announced applications to the study of low-entropy cones. See also the work of Chen \cite{Chen:rot1,Chen:rot2,chen:existunique}. 

Finally, we note that the question of evolving a Ricci flow through a singularity modelled on the evolution of an asymptotically conical gradient shrinking soliton is also of considerable interest (but we note that the analogues of Theorem \ref{theo:fatt-inf} and \ref{theo:exist-inf} and the resolution of point singularities are not understood in general). In particular, expanders have been studied in \cite{ShulzeSimon,Der16,DS:rel,BamChen} and flows have been constructed out of initial Riemannian manifolds with isolated conical singularities modeled on non-negatively curved cones over spheres \cite{GS}. Moreover, ``fattening'' of the cone at infinity of a shrinking gradient Ricci soliton has been constructed in \cite{AK}.


\subsection{Strategy of proof}

Optimistically, one might hope that the resolution of a conical singularity is always modeled on expanders, just as tangent flows are always modeled on self-shrinkers. Indeed, one might expect a forward monotonicity formula would control the forward blow-ups (the (subsequential) weak limits of $\lambda^2 M(\lambda^{-1}t)$ as $\lambda\to\infty$) but there appear to be serious issues to make this rigorous in the setting of isolated conical singularities (cf.\ \cite[p.\ 25]{Trieste}). We do note that in the setting of flows coming out of cones, Bernstein--Wang have obtained a version of forwards monotonicity \cite{BW19} (generalizing to the dynamical setting the relative expander entropy of Deruelle--Schulze \cite{DS20}) and Chen \cite{chen:existunique} has constructed non-self-expanding flows from cones. However, it remains unclear if/how monotonicity based methods could prove that forward blowups of outermost flows are outermost expanders (or even that they are smooth). 

In this article we take a completely different approach (avoiding forwards monotonicity entirely). Instead, we find barriers that push the outermost flows onto the outermost expanders in the forward blowup limit. A closely related construction proves uniqueness of two flows with the same outermost expander blowup limit. The construction of these barriers combines two key spectral properties of an outermost expander $\Sigma$:
\begin{enumerate}
\item The outermost expander minimizes weighted area to the outside, so the linearized expander operator (cf.\ \eqref{eq:definiton-Jacobi}) is non-negative $L_\Sigma\geq 0$. In particular, there is a positive eigenfunction $\phi_{3R}$ on $\Sigma \cap B_{3R}(\bOh)$ with positive eigenvalue $\mu_{3R}>0$. 
\item The outermost expander is the one-sided limit of expanders asymptotic to nearby cones, which yields a positive Jacobi field $L_\Sigma v = 0$ with $v$ growing linearly at infinity. 
\end{enumerate}
The ``interior'' barrier is then formed by taking the graph over $\Sigma$ of a small multiple of $f : = v+\alpha \phi_{3R}$. Because $L_\Sigma f = -\alpha \mu \phi_{3R}$ this can be seen to be a strict barrier in $B_{2R}(\bOh)$, pushing (rescaled) mean curvature flows towards $\Sigma$.

To prove that the flow fattens if the cone fattens (Theorem \ref{theo:fattening}), we can weld (in the sense of  Meeks--Yau \cite{MeeksYau}) the graph of $f$ to the graph of a constant function $h$ over $\Sigma$ to obtain a global barrier $\Gamma_s$ over $\Sigma$ (note that $L_\Sigma h = (|A_\Sigma|^2 - \frac 12)h$ is $<0$ outside of a sufficiently large compact set). (See Proposition \ref{prop:fattening-barrier-final}.) Now, the key observation is that the forward blowups of the outermost flow will lie below $\Gamma_s$ outside of a sufficently large set, since the forward blowups must lie in the level set flow of the cone (which decay towards the cone) while $\Gamma_s$ has height $\sim sh$ over the cone near infinity (see Claim \ref{claim:boundary-true-1}). 

In particular, this proves that the outermost flows have forward blowup at $\bOh$ equal to the outermost flows of the cone $\cC$. To prove that the flow does not fatten if the cone does not fatten, it thus suffices to consider two flows $\cM^1(t),\cM^2(t)$ which have forwards blowup given by the same outermost expander. We construct a barrier in this situation by welding (in the sense of  Meeks--Yau \cite{MeeksYau}) the graph of $\pm sf$ (denoted $\Gamma_{\textrm{close},s}^\pm(t)$) to the normal graph of $s h \sqrt{t}$ over $\cM^1(t)\setminus B_{\sqrt{t}R}(\bOh)$ (denoted $\Gamma_{\textrm{far},s}^\pm(t)$). The barriers then pinch $\cM^2$ towards $\cM^1$ from above and below as $s\to0$ proving uniqueness. This can be seen in Figure \ref{fig:unique}.

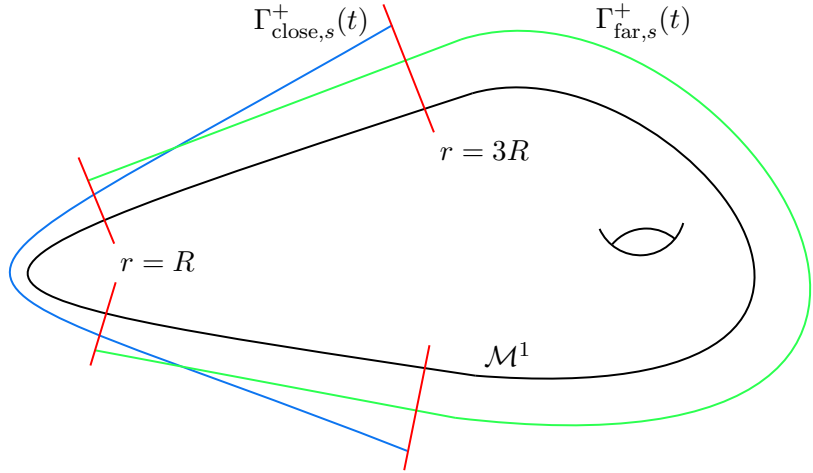
\begin{figure}
\tikzset{every picture/.style={line width=0.75pt}} 

\begin{tikzpicture}[x=0.75pt,y=0.75pt,yscale=-0.75,xscale=0.75]

\draw    (441,260) .. controls (40,200) and (40,200) .. (439,70) ;
\draw [color={rgb, 255:red, 13; green, 117; blue, 240 }  ,draw opacity=1 ]   (395.2,310.62) .. controls (46.2,173.62) and (35.2,226.15) .. (385.2,24.17) ;
\draw [color={rgb, 255:red, 43; green, 255; blue, 80 }  ,draw opacity=1 ]   (180.39,128.83) -- (431.39,33.76) ;
\draw [color={rgb, 255:red, 43; green, 255; blue, 80 }  ,draw opacity=1 ]   (185.45,242.96) -- (427.45,288.36) ;
\draw [color={rgb, 255:red, 255; green, 0; blue, 0 }  ,draw opacity=1 ]   (174.2,113.17) -- (198.2,171.17) ;
\draw [color={rgb, 255:red, 255; green, 0; blue, 0 }  ,draw opacity=1 ]   (199.2,197.17) -- (182.2,253.17) ;
\draw [color={rgb, 255:red, 255; green, 0; blue, 0 }  ,draw opacity=1 ]   (379.2,10.17) -- (413.2,96.23) ;
\draw [color={rgb, 255:red, 255; green, 0; blue, 0 }  ,draw opacity=1 ]   (410.2,239.53) -- (393.2,323.62) ;
\draw    (439,70) .. controls (574.2,31.23) and (787.2,287) .. (441,260) ;
\draw  [draw opacity=0] (580.85,157.26) .. controls (577.48,169.04) and (567.04,178.01) .. (554.11,178.93) .. controls (541.18,179.84) and (529.58,172.41) .. (524.59,161.22) -- (552,149) -- cycle ; \draw   (580.85,157.26) .. controls (577.48,169.04) and (567.04,178.01) .. (554.11,178.93) .. controls (541.18,179.84) and (529.58,172.41) .. (524.59,161.22) ;  
\draw  [draw opacity=0] (533.15,171.56) .. controls (538.7,165.04) and (546.98,160.94) .. (556.21,161) .. controls (563.55,161.05) and (570.26,163.74) .. (575.44,168.15) -- (556,191) -- cycle ; \draw   (533.15,171.56) .. controls (538.7,165.04) and (546.98,160.94) .. (556.21,161) .. controls (563.55,161.05) and (570.26,163.74) .. (575.44,168.15) ;  
\draw [color={rgb, 255:red, 43; green, 255; blue, 80 }  ,draw opacity=1 ]   (431.39,33.76) .. controls (606.2,-22) and (860.2,340) .. (427.45,288.36) ;

\draw (520,7.4) node [anchor=north west][inner sep=0.75pt]    {$\Gamma_{\textrm{far},s}^+(t)$};
\draw (445,235.4) node [anchor=north west][inner sep=0.75pt]    {$\cM^{1}$};
\draw (200.2,174.57) node [anchor=north west][inner sep=0.75pt]    {$r=R$};
\draw (415.2,99.63) node [anchor=north west][inner sep=0.75pt]    {$r=3R$};
\draw (290,8) node [anchor=north west][inner sep=0.75pt]    {$\Gamma _{\textrm{close},s}^{+}(t)$};

\end{tikzpicture}

\caption{The barrier construction to prove uniqueness of flows with the same (outermost) expander $\Sigma$ as forward blowup at $\bOh$. }\label{fig:unique}
\end{figure}

\subsection{Organization}

In Section \ref{sec:prelim} we collect some preliminary definitions and facts to be used later. In Section \ref{sec:expander-barriers} we construct barriers graphical over the expander and then use these barriers to prove that the level set flow is locally modeled on the level set flow of the cone in Section \ref{fattening}. In Section \ref{sec:glob-barrier} we construct global barriers over a flow that's modeled on an outermost expander near the conical singularity and then use these to prove uniqueness of such flows in Section \ref{sec:unique}. Finally, we collect some results about graphs over expanders in Appendix \ref{sec:graphs}. 

\subsection{Acknowledgements} 

O.C. was supported by a Terman Fellowship and an NSF grant (DMS-2304432). J.M.D-H. was supported by the Warwick Mathematics Institute Centre for Doctoral Training. This project has received funding from the European Research Council (ERC) under the European Union’s Horizon 2020 research and innovation programme, grant agreement No 101116390.

\section{Preliminaries}\label{sec:prelim}
In this section we collect some preliminary definitions, conventions, and results. 

\subsection{Spacetime and the level set flow}\label{subsec:level-set-flow}
We define the \emph{time} map $\ft : \RR^{n+1}\times \RR\to \RR$ to be the projection $\ft(\bx,t) = t$. For $E\subset \RR^{n+1}\times \RR$ we will write $E(t) : = E\cap\ft^{-1}(t)$. The knowledge of $E(t)$ for all $t$ is the same thing as knowing $E$, so we will often ignore the distinction. 

For a compact $n$-manifold $M$ (possibly with boundary), we consider $f: M\times [a,b] \to \RR^{n+1}$ so that (i) $f$ is continuous (ii) $f$ is smooth on $(M\setminus\partial M)\times (a,b]$ (iii) $f|_{M\times \{t\}}$ is injective for each $t \in [a,b]$ and (iii) $t\mapsto f(M\setminus\partial M,t)$ is flowing by mean curvature flow. In this case we call
\[
\cM : =\cup_{t\in [a,b]} f(M,t)\times \{t\} \subset \RR^{n+1}\times \RR
\]
a \emph{classical mean curvature flow} and define the \emph{heat boundary} of $\cM$ by
\[
\partial\cM : = f(M,a) \cup f(\partial M,[a,b]). 
\]
Classical flows that intersect must intersect in a point that belongs to at least one of their heat boundaries (cf.\ \cite[Lemma 3.1]{White:topology-weak}). 

For $\Gamma \subset \RR^{n+1}\times [0,\infty)$, $\cM \subset \RR^{n+1}\times \RR$ is a \emph{weak set flow} (generated by $\Gamma$) if $\cM(0) =\Gamma(0)$ and if $\cM'$ is a classical flow with $\partial \cM'$ disjoint from $\cM$ and $\cM'$ disjoint from $\Gamma$ then $\cM'$ is disjoint from $\cM$. There may be more than one weak set flow generated by $\Gamma$. 

The biggest such flow is called the \emph{level set flow}, which can be constructed as follows: For $\Gamma \subset \RR^{n+1}\times [0,\infty)$ as above, we set
\[
W_0 : = \{(\bx,0) : (\bx,0) \not \in \Gamma\}
\]
and then let $W_{k+1}$ denote the union of all classical flows $\cM'$ with $\cM'$ disjoint from $\Gamma$ and $\partial\cM'\subset W_k$. The \emph{level set flow} generated by $\Gamma$ is then defined by
\[
\cM : = (\RR^{n+1}\times [0,\infty))\setminus \cup_k W_k \subset \RR^{n+1}\times [0,\infty). 
\]
See \cite{EvansSpruck1,ilmanen,White:topology-weak}. If $\Gamma\subset \RR^{n+1}\times \{0\}$, we will write $F_t(\Gamma) : = \cM(t)$ for the time $t$ slice of the corresponding level set flow. 

Fix $\Gamma \subset \RR^{n+1}$ closed. We say that the level set flow of $\Gamma$ is \emph{non-fattening} if $F_t(\Gamma)$ has no interior for each $t \geq 0$. This condition holds generically for compact $\Gamma\subset \RR^{n+1}$, namely if $u_0$ is a continuous function with compact level sets $u_0^{-1}(s)$ then the level set flow of $u_0^{-1}(s)$ fattens for at most countably many values of $s$, see \cite[\S 11.3-4]{ilmanen}.

\subsection{Integral Brakke flows} An ($n$-dimensional\footnote{Of course one can consider $k$-dimensional flows in $\RR^{n+1}$ but we will never do so in this paper, so we will often omit the ``$n$-dimensionality'' and implicitly assume that all Brakke flows are flows of ``hypersurfaces.''}) integral Brakke flow in $\RR^{n+1}$ is a $1$-parameter family of Radon measures $(\mu(t))_{t\in I}$ so that
\begin{enumerate}
\item For almost every $t\in I$ there is an integral $n$-dimensional varifold $V(t)$ with $\mu(t) = \mu_{V(t)}$ and so that $V(t)$ has locally bounded first variation and mean curvature $\bH$ orthogonal to $\textrm{Tan}(V(t),\cdot)$ almost everywhere. 
\item For every bounded interval $[t_1,t_2]\subset I$ and $K\subset \RR^{n+1}$ compact, we have
\[
\int_{t_1}^{t_2} \int_K (1+|\bH|^2) d\mu(t) dt < \infty.
\]
\item If $[t_1,t_2]\subset I$ and $f\in C^1_c(\RR^{n+1}\times [t_1,t_2])$ has $f\geq 0$ then
\[
\int f(\cdot,t_2) d\mu(t_2) - \int f(\cdot,t_1) d\mu(t_1) \leq \int_{t_1}^{t_2} \int (-|\bH|^2 f + \bH \cdot\nabla f + \tfrac{\partial f}{\partial t}) d\mu(t) dt.
\]
\end{enumerate}
We will sometimes write $\cM$ to represent a Brakke flow. 

We define the support of $\cM = (\mu(t))_t$ to be $\overline{\cup_t \supp \mu(t) \times \{t\}} \subset \RR^{n+1}\times \RR$.  It is useful to recall that the support a Brakke flow (with $t \in [0,\infty)$) is a weak set flow (generated by $\supp \mu(0)$) \cite[10.5]{ilmanen}. 

We say that a sequence of integral Brakke flows $\cM_i$ converges to another integral Brakke flow $\cM$ (written $\cM_i \rightharpoonup \cM$) if $\mu_i(t)$ weakly converges to $\mu(t)$ for all $t$ and for almost every $t$, after passing to a further subsequence depending on $t$, the associated integral varifolds converge $V_i(t) \to V(t)$. (Recall that if $\cM_i$ is a sequence of integral Brakke flows with uniform local mass bounds then a subsequence converges to an integral Brakke flow \cite[\S 7]{ilmanen}.)

For a Brakke flow $\cM$ and $\lambda>0$ we write $\cD_\lambda(\cM)$ for the ``dilated'' Brakke flows with measures satisfying $\mu_\lambda(t)(A) = \lambda^n \mu(\lambda^{-2}t)(\lambda^{-1}A)$. 


\subsection{The inner/outer flows of a level set flow}\label{sec:outermost}

We collect results of \cite{hershwhite} on weak set flows and outermost flows and show that they are also applicable (with minor modifications) to the flow of more general initial data. 

\begin{prop}[{\cite[Proposition A.3]{hershwhite}}]\label{thm:app.1}
Suppose that $F$ is a closed subset of $\RR^{n+1}$, and let $\cM \subset \RR^{n+1} \times \RR^+$ be its level set flow. Set:
$$ M(t) := \{\mathbf{x} \in \RR^{n+1} : (\mathbf{x},t) \in \partial \cM \}\, .$$
Then $t \mapsto M(t)$ is a weak set flow.
\end{prop}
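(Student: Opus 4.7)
The plan is to verify the defining avoidance property of a weak set flow: any classical flow $\cM'$ in $\RR^{n+1}\times [a,b]$ whose heat boundary $\partial\cM'$ is disjoint from $M$ must itself be disjoint from $M$. Since $M=\partial\cM$, the spacetime complement $(\RR^{n+1}\times [0,\infty))\setminus M$ decomposes as a disjoint union of two open sets: the spacetime interior $\topint(\cM)$ and the open complement $\cM^c:=(\RR^{n+1}\times [0,\infty))\setminus \cM$. As $\partial\cM'$ is compact and avoids the closed set $M$, it splits as $\partial\cM'=A\sqcup B$ with $A\subset\topint(\cM)$ and $B\subset\cM^c$ both compact. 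By passing to connected components of $\cM'$, I can reduce to two cases: either $\partial\cM'\subset\cM^c$, or $\partial\cM'\subset\topint(\cM)$.

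The first case is immediate, since $\cM$ is a weak set flow by construction: classical avoidance yields $\cM'\cap\cM=\emptyset$, so in particular $\cM'\cap M=\emptyset$. The second case is the substantive one, and I would prove the stronger statement $\cM'\subset\topint(\cM)$. Recall Ilmanen's iterative construction $\cM^c=\bigcup_k W_k$, with $W_0=\{(x,0):x\notin F\}$ and $W_{k+1}$ the union of all classical flows $\tilde{\cM}$ with $\partial\tilde{\cM}\subset W_k$ and $\tilde{\cM}\cap(F\times\{0\})=\emptyset$. I would then prove $\cM'\cap W_k=\emptyset$ by induction on $k$: the base case follows because if $\cM'$ starts at $a>0$ then it misses $\{t=0\}$, while if $a=0$ we have $\cM'(0)\subset\topint(\cM)(0)\subset\cM(0)=F$; the inductive step takes any classical flow $\tilde{\cM}\subset W_{k+1}$ and uses the principle that two intersecting classical flows meet on a heat boundary---$\partial\cM'\cap\tilde{\cM}\subset\topint(\cM)\cap\cM^c=\emptyset$, and $\cM'\cap\partial\tilde{\cM}\subset\cM'\cap W_k=\emptyset$ by the inductive hypothesis. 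This gives $\cM'\subset\cM$.

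To upgrade $\cM'\subset\cM$ to $\cM'\subset\topint(\cM)$, I would perturb $\cM'$ on both sides: let $\cM'_{\pm\epsilon}$ denote the classical MCF starting from a small normal displacement $\pm\epsilon\nu$ of the initial slice of $\cM'$. Short-time existence and smooth dependence of classical MCF on smooth compact initial data produce $\cM'_{\pm\epsilon}$ on $[a,b]$ for $\epsilon$ sufficiently small, with $\partial\cM'_{\pm\epsilon}\subset\topint(\cM)$ by openness. The preceding argument applied to each perturbation yields $\cM'_{\pm\epsilon}\subset\cM$. The smoothness of $\cM'$ ensures the family $\{\cM'_\epsilon\}_{|\epsilon|<\delta}$ covers a two-sided spacetime neighborhood of $\cM'$, so this neighborhood lies entirely in $\cM$; hence $\cM'\subset\topint(\cM)$, which in particular means $\cM'\cap M=\emptyset$.

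The main obstacle I anticipate is precisely this last promotion step: the inclusion $\cM'\subset\cM$ does not directly exclude $\cM'$ from touching $M=\partial\cM$ from inside $\cM$, so one genuinely needs a two-sided perturbation and must check that the union of perturbed flows fills a neighborhood of $\cM'$. This relies on continuous dependence of classical MCF on smooth initial data up to the existence time of $\cM'$ and on the fact that the normal map $(x,t,\epsilon)\mapsto(x+\epsilon\nu(x,t),t)$ is a local diffeomorphism near $\cM'$; both are standard for smooth classical flows with compact heat boundary.
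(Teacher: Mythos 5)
Your overall architecture is sound, and the paper itself offers no proof of this proposition (it is quoted verbatim from Hershkovits--White), so I am judging the argument on its own terms. The decomposition of the complement of $\partial\cM$ into $\topint\cM\sqcup\cM^c$, the reduction to connected components (valid, since a connected classical flow has connected heat boundary), Case~1 via the avoidance property of the level set flow $\cM$, and the induction over Ilmanen's sets $W_k$ yielding $\cM'\subset\cM$ in Case~2 are all correct; this last induction is the genuinely substantive part and you have it right.

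The promotion from $\cM'\subset\cM$ to $\cM'\cap\partial\cM=\emptyset$, which you correctly flag as the crux, has real gaps as written. First, if $M$ has nonempty boundary, ``the classical MCF starting from $f(M,a)+\epsilon\nu$'' is not well defined: one must prescribe the motion of $\partial M$, and short-time existence with smooth dependence on $[a,b]$ then requires solving a parabolic Dirichlet problem, not the standard closed-hypersurface theory you invoke. This is avoidable entirely: replace the normal perturbations by the spatial translates $\cM'+v$, $|v|<\delta$. These are automatically classical flows, their heat boundaries lie in $\topint\cM$ for small $|v|$ by compactness and openness, your induction applies to each verbatim, and their union is exactly $\{(\bx,t):t\in[a,b],\ d(\bx,\cM'(t))<\delta\}$ -- no PDE perturbation theory and no foliation lemma needed. (Your stated justification for the covering is also off: the normal exponential map of $\cM'$ being a local diffeomorphism does not control the perturbed \emph{flows} at positive times; one would need positivity of the normal variation field via the maximum principle, or nestedness plus continuity.) Second, and independently of which family you use, the perturbed/translated flows exist only for $t\le b$, so they certify that points of $\cM'$ have neighborhoods in $\cM$ relative to $\RR^{n+1}\times[a,b]$ only; for a point $(\bx_0,b)$ on the final time slice this does not place it in $\topint\cM$ (taken relative to $\RR^{n+1}\times\RR^+$), since membership in $\partial\cM$ could be witnessed by points of $\cM^c$ at times $t>b$. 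One must add a forward-in-time argument, e.g.\ extend the classical flow slightly past $b$ (its heat boundary remains in $\topint\cM$ by continuity), or observe that $\bx_0\in\topint(\cM(b))$ spatially and that the level set flow of a small closed ball in $\cM(b)$ (a shrinking ball, contained in $\cM$ by containment of level set flows) supplies the missing forward neighborhood.
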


In what follows, we assume that $F$ is the closure of its interior in $\RR^{n+1}$ (we will call\footnote{Note that this slightly extends  the definition in \cite{hershwhite}, where $\partial F$ ($\partial U$ in their notation) would be a compact, smooth hypersurface. This extension allows us to flow from non-compact and non-smooth initial surfaces. This does not change anything in the analysis of \cite{hershwhite}.}  such a set $F$  \emph{admissible}).
Let $F':= \overline{F^c}$, denote the level set flows of $F$, $F'$ by $\cM$, $\cM'$, and set $F(t):= \cM(t)$, $F'(t):= \cM'(t)$. In line with Proposition \ref{thm:app.1}, we set:
\begin{align*}
M(t) &:= \{ (\mathbf{x},t) \subset \RR^{n+1} : \mathbf{x} \in \partial \cM\},\\
M'(t) &:= \{ (\mathbf{x},t) \subset \RR^{n+1} : \mathbf{x} \in \partial \cM'\}.
\end{align*}
(Here $\partial \cM$, $\partial \cM'$ are the relative boundaries of $\cM$, $\cM'$ as subsets of $\RR^{n+1}\times \RR^+$). We call
\[ t \mapsto M(t), \; t \mapsto M'(t) \]
the \emph{outer} and \emph{inner} flows of $M:= \partial F$. By Proposition \ref{thm:app.1}, $M(t)$, $M'(t)$ are contained in the level set flow generated by $M$. Furthermore,
\[
M(t) = \lim_{\tau \nearrow t} \partial F(\tau) 
\]
for all $t > 0$, and $M(t) = \partial F(t)$ for all but countably many $t$. See \cite[Theorems B.5,  C.10]{hershwhite}. Note that \cite[Theorems B.5]{hershwhite} directly carries over to $M = \partial F$ where $F$ is admissible. 

We will say that an admissible set $F$ is \emph{smoothable}, if the following holds: There exist compact regions $F\Subset F_i$ with smooth boundaries such that 
\begin{itemize}
\item[(1)] For each $i$, $F_{i+1}$ is contained in the interior of $F_{i}$.
\item[(2)] $\cap F_i = \topint F$.
\item[(3)] $\cH^{n}\lfloor M$ is a Radon measure and $\cH^{n}\lfloor \partial F_i \rightarrow \cH^{n}\lfloor M$.
\item[(4)] There is $\Lambda > 0$ so that for any $p \in \RR^{n+1}$ and $\rho>0$ it holds that $|\partial F_i \cap B_\rho(p)| \leq \Lambda \rho^n$.
\end{itemize}
By perturbing $F_i$ slightly, we can also assume that
\begin{itemize}
\item[(5)] the level set flow of $\partial F_i$ never fattens.
\end{itemize}
Choose integral Brakke flows $t\mapsto \mu_i(t)$ starting from $\mu_i(0) = \cH^n\lfloor \partial F_i$ via elliptic regularization. Assume that $\mu_i$ limits to $t\mapsto \mu(t)$ in the sense of Brakke flows. Note that the flow $t \in [0,\infty) \mapsto \mu(t)$ is an integral, unit-regular Brakke flow with $\mu(0) = \cH^{n} \lfloor M$

We do the same hold with $F'$ replacing $F$ and so on. We then directly generalize \cite[Theorems B.6,  B.8]{hershwhite}.  The proof extends verbatim.

\begin{prop}\label{prop:inner-outermost-flow} Assume $F$ is admissible and smoothable with $M = \partial F$ The Brakke flow $t\mapsto \mu(t)$ has spacetime support equal to the spacetime set swept out by $t \in [0,\infty) \mapsto M(t)$, where $t  \mapsto M(t)$ is the outer flow of $M$. More precisely, for $t>0$, the Gaussian density of the flow $\mu(\cdot)$ at $(\mathbf{x},t)$ is $>0$ if and only if $\mathbf{x} \in M(t)$. The analogous statement holds for the inner flow $t  \mapsto M'(t)$ of $M$.
\end{prop}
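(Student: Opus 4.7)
The plan is to propagate \cite[Theorems B.6, B.8]{hershwhite} from the smooth approximants to the limit flow. First I would apply those theorems to each $\mu_i$: since $\partial F_i$ is smooth and compact with non-fattening level set flow, the spacetime support $\supp \mu_i$ coincides with the outer flow $\cM_i$ of $\partial F_i$, and by unit regularity of elliptic regularization flows one has $\Theta_{\mu_i}(\mathbf{x},t) \geq 1$ at every $(\mathbf{x}, t)\in \cM_i$. The same statements apply to $\mu_i'$ and the inner flows.

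Second, I would establish local Hausdorff convergence $\cM_i \to \cM$ in spacetime, where $\cM$ denotes the spacetime set swept out by the outer flow of $M$. The required inputs are: (i) nestedness $F_i(t) \supset F_{i+1}(t) \supset F(t)$ coming from the avoidance principle applied to initial data; (ii) $\bigcap_i F_i(t) = F(t)$, which follows from $\bigcap_i F_i = \topint F$ together with continuity of the level set flow under monotone decreasing compact initial data (itself a consequence of avoidance); (iii) the identity $M(t) = \lim_{\tau \nearrow t} \partial F(\tau)$ combined with the fact that $M(t) = \partial F(t)$ off a countable set. Symmetric considerations give convergence of the inner-flow sets.

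Third, the support identification is a two-sided inclusion. For $\supp \mu \subset \cM$: Brakke convergence $\mu_i \rightharpoonup \mu$ gives $\supp \mu \subset \limsup_i \supp \mu_i = \limsup_i \cM_i \subset \cM$. For $\cM \subset \supp \mu$: given $(\mathbf{x}, t) \in \cM$ with $t > 0$, choose $(\mathbf{x}_i, t_i) \in \cM_i$ converging to $(\mathbf{x}, t)$; unit regularity gives $\Theta_{\mu_i}(\mathbf{x}_i, t_i) \geq 1$, and upper semicontinuity of the Gaussian density along converging Brakke flows (via Huisken monotonicity) yields $\Theta_\mu(\mathbf{x}, t) \geq 1$. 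This simultaneously proves $(\mathbf{x}, t) \in \supp \mu$ and the Gaussian density characterization.

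The main obstacle is the middle step: one must verify that the outer-flow sets $\cM_i$ --- rather than the full level set flows of $\partial F_i$, which could in principle be larger at fattening times --- converge to the outer flow of $M$ rather than to the (potentially larger) level set flow of $M$. The smoothability hypotheses (3)--(5), namely non-fattening of each approximant, $\cH^n$ mass continuity, and uniform density bounds, are precisely what prevent loss, fattening, or spurious cancellation of boundary mass in the Brakke limit and make the Hershkovits--White argument go through unchanged.
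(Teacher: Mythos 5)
Your proposal is correct and takes essentially the same route as the paper, which offers no independent argument for this proposition: it simply asserts that the proofs of Hershkovits--White's Theorems B.6 and B.8 ``extend verbatim'' to admissible, smoothable $F$, and that extension is precisely the approximate-and-pass-to-the-limit scheme you describe (density lower bounds on the non-fattening approximants, Hausdorff convergence of the nested outer flows, and upper semicontinuity of Gaussian density under Brakke convergence). The step you single out as the main obstacle is exactly where the smoothability hypotheses (1)--(5) enter in the Hershkovits--White argument, so your assessment matches the paper's (implicit) proof.
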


\subsection{Density, Huisken's monotonicity, and entropy}\label{subsec:dens-mon-entropy} For $X_0 = (\bx_0,t_0)\in\RR^{n+1}\times \RR$ we consider the ($n$-dimensional) backwards heat kernel based at $X_0$:
\begin{equation}\label{eq:gauss.dens.defn}
\rho_{X_0} (\bx,t) : = (4\pi(t_0-t))^{-\frac n2} \exp\left( -\frac{|\bx-\bx_0|^2}{4(t_0-t)}\right)
\end{equation}
for $\bx\in\RR^{n+1},t<t_0$. For $\cM$ a Brakke flow defined on $[T_0,\infty)$, $t_0>T_0$ and $0<r\leq\sqrt{T_0-t_0}$, we set
\[
\Theta_\cM(X_0,r) : = \int \rho_{X_0}(\bx,t_0-r^2) d\mu(t_0-r^2). 
\]
Huisken's monotonicity formula \cite{Huisken:sing,Ilmanen:singularities} implies that $r\mapsto \Theta_{\cM}(X_0,r)$ is non-decreasing (and constant only for a shrinking self-shrinker centered at $X_0$). In particular we can define the density of $\cM$ at such $X_0$ by
\[
\Theta_\cM(X_0) : = \lim_{r\searrow 0} \Theta_\cM(X_0,r). 
\]
We call an integral Brakke flow $\cM$ \emph{unit-regular} if $\cM$ is smooth in a forwards-backwards space-time neighborhood of any space-time point $X$ with $\Theta_\cM(X) = 1$. Note that we can then write $\sing\cM = \{X \in\RR^{n+1}\times \RR : \Theta_\cM(X) > 1\}$.  Note that by \cite[Theorem 4.2]{SchulzeWhite} the class of unit-regular integral Brakke flows is closed under the convergence of Brakke flows. Furthermore, combining  \cite[Lemma 4.1]{SchulzeWhite} and \cite{White:Brakke} it follows that there is $\varepsilon_0>0$, depending only on dimension, such that every point $X\in \sing\cM$ has $\Theta_\cM(X) \geq 1 + \varepsilon_0$. Upper semi-continuity of density then implies that $\sing\cM$ is closed.



\subsection{Cones and Self-Expanders}\label{subsec:cones}
Consider $S \subset \mathbb{S}^n$ a smooth, embedded, closed hypersurface. We then call the cone over $S$, denoted by $\mathcal{C}=\mathcal{C}(S) \subset \RR^{n+1}$, \emph{smooth}. We say that $M\subset\Rb{n+1}$ is a \emph{smooth hypersurface with a conical singularity at $\bx_0$ modelled on the cone $\cC$} if:
    \begin{enumerate}
        \item $M\backslash\{\bx_0\}$ is a smooth (embedded) hypersurface,
        \item $\lim_{\rho\to\infty}\rho\cdot (M-\bx_0) = \cC$, 
    \end{enumerate}
    where the convergence is in $C^\infty_{loc}(\Rb{n+1}\backslash \{0\})$. Note that a hypersurface with conical singularities is admissible and smoothable in the sense of Section \ref{sec:outermost}, see also \cite[Appendix E]{CCMS:generic1}. 
    
    Similarly, we say that a hypersurface $M\subset\Rb{n+1}$ is  \emph{(smoothly) asymptotic} to $\cC$ if     
    \begin{align*}
        \lim_{\rho\searrow0}\rho \cdot M=\mathcal{C}
    \end{align*}
    in $C^\infty_{loc}(\Rb{n+1}\backslash\{0\})$.

A natural class of solutions to mean curvature flow, starting from an initial (smooth) cone $\cC$, are \emph{self-similarly expanding solutions}, i.e.~solutions given by
\begin{equation}\label{eq:expanding_solutions}
 t\mapsto \sqrt{t} \cdot \Sigma
 \end{equation}
for $t>0$, where $\Sigma$ is asymptotic to $\cC$. These solutions are invariant under parabolic rescalings \emph{forward in time}. The condition that \eqref{eq:expanding_solutions} is a mean curvature flow yields an elliptic equation for $\Sigma$, given by  
\begin{align}\label{eq:expander}
    \mathbf{H}_\Sigma(\mathbf{x})-\frac{\mathbf{x}^\perp}{2} = 0.
\end{align}
We call $\Sigma$ a \emph{self-expander} and denote the corresponding immortal solution to mean curvature flow by $\cM_\Sigma$. Alternatively, self-expanders are critical points (under compact perturbations) of the expander functional 
$$ \cE(M) = \int_Me^\frac{|\bx|^2}{4}\, d\cH^n\, .$$
We call a self-expander $\Sigma$ stable if the second variation of $\cE$ is non-negative under compact perturbations, i.e.~if
$$  \int_\Sigma \varphi (-L_\Sigma\varphi) \, e^\frac{|\bx|^2}{4}\, d\cH^n \geq 0 $$
for all $\varphi \in C^\infty_c(\Sigma)$, where $L_\Sigma$ is the corresponding Jacobi operator given by
 \begin{equation}\label{eq:definiton-Jacobi}
        L_{\Sigma}=\Delta_\Sigma +\frac{\mathbf{x}}{2}\cdot \nabla_\Sigma -\frac{1}{2}+|A_\Sigma|^2.
    \end{equation}
 Note that a stable expander $\Sigma$ becomes strictly stable when restricting to any compact subset $K \subset \Sigma$. Denoting  $\Sigma_R : = \Sigma \cap B_R(\bOh)$ for $R \in (0,\infty)$, this implies that there exists a  positive first eigenfunction $\phi_R \in C^\infty(\Sigma_R)$ (unique up to scaling) solving 
 \begin{equation}\label{eq:first-eignfct-L-fR}
\begin{cases}
    L_\Sigma \phi_R+ \mu_R\phi_R = 0 \ &\mathrm{in} \ \Sigma_R \\
    \phi_R=0 \ &\mathrm{on} \ \partial\Sigma_R\, ,
\end{cases}
\end{equation}
where $R\mapsto \mu_R >0$ is monotonically decreasing in $R$. We will scale such that  $ \int_{\Sigma_R} \phi_R^2 e^{|\bx|^2/4} =1$, ensuring that $\phi_R$ is unique. 
 
Linearising the expander equation \eqref{eq:expander} yields solutions to the linearized equation, i.e.~functions $u\in C^\infty(\Sigma)$ such that $L_\Sigma u= 0$. We call such a function a \emph{Jacobi field}.

We further recall the following decay estimate.
\begin{prop}[{\cite[Lemma 5.3]{DingExp}}]\label{prop:DS-decay-expanders}
Let $\Sigma$ denote an expander asymptotic to a smooth cone. Then there is $R>0$ sufficiently large so that $\Sigma\setminus B_R(\bOh)$ can be written as a normal graph over $\cC$ with the graphical height function $\sigma=o(|\bx|^{-1})$ 
as $\bx\to\infty$. 
\end{prop}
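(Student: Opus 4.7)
The plan is to rewrite the expander equation as a quasilinear elliptic PDE for a normal-graph height function $\sigma$ over $\cC$ outside a large ball, and then use the scale-invariant structure of this PDE together with smooth asymptoticity to bootstrap to the sharp decay.

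First, using the definition of smooth asymptoticity (that $\rho\Sigma \to \cC$ in $C^\infty_{\text{loc}}(\RR^{n+1}\setminus\{\bOh\})$ as $\rho \searrow 0$), one chooses $R_0$ large enough that $\Sigma\setminus B_{R_0}(\bOh)$ is a normal graph of some function $\sigma$ over $\cC\setminus B_{R_0}(\bOh)$. Scale-invariance of the convergence on annuli yields scale-invariant derivative bounds $r^{-1}|\sigma|, |\nabla\sigma|, r|\nabla^2\sigma|, \ldots \to 0$ uniformly in the angular variable as $r = |\bx| \to \infty$. In particular, the weaker bound $\sigma = o(r)$ is known from the outset.

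Next, translating the expander equation $\bH_\Sigma = \tfrac12 \bx^\perp$ into a graph equation over $\cC$, one finds that $\sigma$ satisfies
\[
\mathcal{L}_\cC\sigma \;=\; -H_\cC \;+\; \mathcal{N}[\sigma,\nabla\sigma,\nabla^2\sigma],
\]
where $\mathcal{L}_\cC\sigma := \Delta_\cC\sigma + \bigl(|A_\cC|^2 - \tfrac12\bigr)\sigma + \tfrac12\,\bx\cdot\nabla_\cC\sigma$ is the Jacobi operator of the expander functional on $\cC$ and $\mathcal{N}$ is the quadratic remainder, whose coefficients decay as negative powers of $r$. In cone coordinates $\bx = r\omega$ with $\omega\in S$, the dominant piece of $\mathcal{L}_\cC$ at infinity is the radial drift $\tfrac r2\partial_r - \tfrac12$, whose homogeneous modes are $\sigma \sim r$ and $\sigma \sim r^{-1}$.

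Finally, one runs a bootstrap on dyadic annuli $\{2^k\leq r\leq 2^{k+1}\}$: interior Schauder estimates applied to the rescaled unit-scale equation, combined with comparison against explicit radial barriers $\pm\varepsilon\, r^{-1}$ built from the drift ODE, improve the decay of $\sigma$ in successive stages from $o(r)$ to $O(1)$ to $O(r^{-1})$ and finally to $o(r^{-1})$. At each stage the improvement uses that, given the previous level of smallness, both the source $-H_\cC$ and the nonlinear remainder $\mathcal{N}$ become strictly subleading in the scale-invariant norm, so a maximum principle on exterior annuli forces the strict decay improvement. The principal obstacle is precisely that the rate $r^{-1}$ is borderline: it is the resonance scale of $\tfrac r2\partial_r - \tfrac12$, lying exactly between the two natural homogeneities of the drift. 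Upgrading from $O(r^{-1})$ to $o(r^{-1})$ therefore requires genuinely using the elliptic smoothing from $\Delta_\cC$ and the iterated refinement of the barriers together, which is the technical heart of the estimate.
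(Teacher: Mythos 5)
The paper offers no proof of this proposition --- it is quoted from Ding's work with only the remark that it improves the trivial $o(|\bx|)$ bound --- so you are reconstructing an argument the authors never wrote. Your machine (rewrite the expander equation as a graph equation over $\cC$, isolate the radial drift, bootstrap on dyadic annuli with Schauder estimates and barriers) is the right one, and it does give $\sigma=O(r^{-1})$ essentially in one pass: smooth asymptoticity gives $|\sigma|+r|\nabla\sigma|+r^2|\nabla^2\sigma|=o(r)$, so in $\mathcal{L}_\cC\sigma=-H_\cC+\mathcal{N}$ everything except the drift $\tfrac r2\partial_r-\tfrac12$ is $O(r^{-1})$, and integrating $(\sigma/r)'$ from $r$ to $\infty$ closes the estimate.

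The gap is in the final step, and it cannot be repaired. First, the homogeneous modes of $\tfrac r2\partial_r-\tfrac12$ (equivalently of $\partial_r^2+\tfrac r2\partial_r-\tfrac12$) are $r$ and a Gaussian-decaying solution $\sim r^{-2}e^{-r^2/4}$; the rate $r^{-1}$ is \emph{not} a homogeneous mode and there is no resonance there. The rate $r^{-1}$ enters only through the forcing $-H_\cC$: writing $H_\cC(r\omega)=h(\omega)/r$ with $h$ the (generically nonzero) mean curvature of the link, the non-resonant particular solution of $\bigl(\tfrac r2\partial_r-\tfrac12\bigr)\sigma_p=-h/r$ is $\sigma_p=h(\omega)/r$, so in fact $\sigma=h(\omega)r^{-1}+o(r^{-1})$. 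No amount of elliptic smoothing or barrier refinement can remove a nonzero leading coefficient forced by the source, so the upgrade from $O(r^{-1})$ to $o(r^{-1})$ fails whenever $\cC$ is not minimal. This is visible explicitly for the rotationally symmetric expanders asymptotic to $\{x_{n+1}=a|x'|\}$ with $a\neq 0$, $n\geq 2$: the profile satisfies $u(\rho)=a\rho+(n-1)a\,\rho^{-1}+o(\rho^{-1})$, so the normal height over the cone is genuinely comparable to $|\bx|^{-1}$. In other words, the statement you are trying to prove is only reachable under an extra hypothesis such as $H_\cC\equiv 0$; for a general smooth cone the sharp conclusion your method yields is $\sigma=O(|\bx|^{-1})$ with leading term $rH_\cC\cdot|\bx|^{-1}$. (That weaker conclusion is all the paper ever uses: Claim \ref{claim:boundary-true-1} only needs $\sigma=o(1)$, and Lemma \ref{dsvars} is stated with a big-$O$.) You should either prove the $O(|\bx|^{-1})$ version, or state and use the expansion with the explicit $H_\cC$ term subtracted off.
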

This improves the trivial $\sigma=o(|\bx|)$ estimate via comparison with large spheres. 

\subsection{The level set flow of a cone and the outermost expanders} \label{subsec:level-set-flow-cone}

For a smooth cone $\cC = \cC(S)$ with $\cC=\partial W$ for $W$ a closed set, we define $\Gap(\cC)$ to be the level set flow of the cone $\cC$ at time $t=1$. Since the level set flow is unique, and $\cC$ is invariant under scaling, it follows that the level set flow of $\cC$ is given by $t \mapsto\sqrt{t}\cdot \Gap(\cC)$ for $t\in (0,\infty)$.  

The analogous statement to Proposition \ref{prop:inner-outermost-flow} holds also for the level-set flow of smooth hypercones, see \cite[Theorem E.2]{CCMS:generic1}.  Furthermore, in \cite[Theorem 8.21]{CCMS:generic1} it was shown that the outermost/innermost flows from a cone (in low dimensions) are modelled on smooth expanders, minizing the expander functional $\cE$ from the outside. (For $n=2$ smoothness had been shown by Ilmanen \cite{Trieste}.) We will refer to these as the \textit{outermost expanders}. We summarize these facts as follows:

\begin{theorem}[\cite{Trieste,CCMS:generic1}]\label{ccmsout}
 For $2\leq n\leq 6$, let $\mathcal{C}^n\subset \Rb{n+1}$ be a smooth cone. Then, there are smooth expanders $\Sigma,\Sigma'$, smoothly asymptotic to $\mathcal{C}$. The expanders $\Sigma,\Sigma'$ describe the level set flow of $\cC$ in the following sense:
 \begin{itemize}
 \item If the level set flow of $\cC$ does not fatten, then $\Gap(\cC)= \Sigma=\Sigma':=\Sigma$ . 
 \item If the level set flow of $\cC$ does fatten, then $\Sigma \cap \Sigma' = \emptyset$ and $\Gap(\cC)$ is the region between $\Sigma$ and $\Sigma'$, i.e.~$\partial\!\Gap(\cC) = \Sigma\cup\Sigma'$.
 \end{itemize}
 Finally, $\Sigma$ minimizes the expander functional $\cE$ to the outside (relative to $W$) on compact sets. Similarly, $\Sigma'$ minimizes $\cE$ to the inside. 
\end{theorem}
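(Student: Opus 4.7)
The plan is to extract $\Sigma, \Sigma'$ by minimizing the expander functional $\cE$ on each side of the cone, then identify these minimizers with the outermost Brakke flows issuing from $\cC$, and finally deduce the fattening dichotomy from whether the two minimizers coincide.

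First I would construct $\Sigma$ (the argument for $\Sigma'$ being symmetric). Fix a large $R$ and consider the class of Caccioppoli sets $E$ in $B_R(\bOh)$ containing $W\cap B_R(\bOh)$ and agreeing with $W$ near $\partial B_R(\bOh)$. Minimize the weighted perimeter $\cE(\partial^* E \cap B_R(\bOh)) = \int_{\partial^*E} e^{|\bx|^2/4}\, d\cH^n$ in this class, producing a minimizer $E_R$. The standard $\cE$-monotonicity under outward perturbations will force $\partial E_R$ to be outside $\cC$. Exhausting $R\nearrow\infty$ and taking a subsequential limit gives a one-sided minimizer $\Sigma = \partial E$ in every compact set. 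Because $\cE$ is a weighted perimeter functional with smooth weight, the usual De Giorgi/Federer--Fleming regularity theory applies: for $2\leq n \leq 6$ the reduced boundary is smooth and there are no singular points (the tangent cones would be $\cE$-minimizing in the Euclidean sense, contradicting Simons-type rigidity in these dimensions). Criticality of $\cE$ is exactly \eqref{eq:expander}, so $\Sigma$ is a smooth self-expander.

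Next, to show $\Sigma$ is smoothly asymptotic to $\cC$, I would use large-sphere barriers to confine $\Sigma\setminus B_R$ close to $\cC$ in $C^0$, then upgrade to $C^\infty_{loc}$ decay via the expander equation \eqref{eq:expander}, linearized around $\cC$; this is a quasilinear elliptic equation for the graphical height $\sigma$ of $\Sigma$ over $\cC$, and classical Schauder estimates together with a maximum principle on annuli yield smooth asymptotic convergence (see Proposition \ref{prop:DS-decay-expanders}). One-sidedness of $\Sigma$ relative to $\cC$ follows from one-sided minimization.

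To identify $\Sigma$ with the outermost flow from $\cC$, I would invoke the smoothable-admissible framework of Section \ref{sec:outermost}, applied to the cone $\cC$ (as done in \cite[Theorem E.2]{CCMS:generic1}). Approximating $\cC$ by smooth compact regions $F_i \supsetneq W$ with $F_{i+1}\Subset \topint F_i$, $\cap F_i = \topint W$, and non-fattening level set flows, the elliptic regularization Brakke flows $\mu_i(t)$ starting from $\partial F_i$ converge to a unit-regular Brakke flow whose support is the spacetime track of the outer flow $t\mapsto M(t)$ of $\cC$ by Proposition \ref{prop:inner-outermost-flow}. By $\cC$-scaling invariance and uniqueness of the limit, $M(t) = \sqrt{t}\,\Sigma_{\mathrm{out}}$ for a single expander $\Sigma_{\mathrm{out}}$ asymptotic to $\cC$; comparison with $\Sigma$ via the $\cE$-minimizing property (any one-sided $\cE$-competitor is dominated by $\Sigma_{\mathrm{out}}$, and conversely $\Sigma_{\mathrm{out}}$ arises as a limit of minimizers by elliptic regularization) shows $\Sigma_{\mathrm{out}} = \Sigma$. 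The same applies on the other side to give $\Sigma'$.

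Finally, the dichotomy: if $\Sigma = \Sigma'$ as sets, then the outer and inner flows of $\cC$ agree, hence $\Gap(\cC) = \Sigma$ and the level set flow does not fatten. If $\Sigma \neq \Sigma'$, then the strong maximum principle applied to the expander equation on the region between them (they are both one-sided $\cE$-minimizers from opposite sides, smoothly asymptotic to $\cC$) forces $\Sigma \cap \Sigma' = \emptyset$, and the region between them is swept out by the level set flow, giving the stated fattening picture $\partial \Gap(\cC) = \Sigma \cup \Sigma'$. The main obstacle in this outline is the regularity/asymptotics of one-sided $\cE$-minimizers at the cone-link at infinity and the identification step between the variational minimizer and the outermost Brakke flow; both are handled by \cite[\S 8]{CCMS:generic1} for $n \geq 3$ and by Ilmanen's original construction \cite{Trieste} for $n=2$.
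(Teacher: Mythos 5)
This theorem is not proved in the paper at all: it is imported verbatim from the literature, with the pointers being \cite[Theorem 8.21, Theorem E.2]{CCMS:generic1} for $n\geq 3$ and Ilmanen--White \cite{Trieste} for $n=2$. So there is no internal argument to compare against; what can be assessed is whether your sketch is a faithful reconstruction of the cited strategy. In outline it is: one-sided minimization of $\cE$ to produce $\Sigma,\Sigma'$, interior regularity in dimensions $2\leq n\leq 6$ via classification of tangent cones, asymptotics to $\cC$ via barriers and the linearized expander equation, identification with the inner/outer flows of the cone through the Hershkovits--White/elliptic-regularization framework, and the dichotomy from whether the two one-sided minimizers coincide. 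Your closing sentence correctly isolates where the real content lies.

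That said, two steps in your sketch are asserted rather than argued, and they are precisely the nontrivial ones. First, the identification $\Sigma_{\mathrm{out}}=\Sigma$: minimizing weighted perimeter over $E\supset W\cap B_R(\bOh)$ and passing $R\to\infty$ produces \emph{some} one-sided $\cE$-minimizer, but one-sided minimizers need not be unique, and it is not automatic that your variational limit is the \emph{outermost} one, nor that the time-one slice of the outer Brakke/level-set flow of $W$ one-sidedly minimizes $\cE$ (the latter is the substance of \cite[Theorem 8.21]{CCMS:generic1} and is obtained there from the elliptic-regularization structure, not by the soft comparison you indicate). Relatedly, your obstacle problem allows $\partial E_R$ to stick to $\cC$, and detaching the minimizer from the cone (in particular near the vertex, where the obstacle is singular and standard obstacle-problem regularity does not directly apply) requires a separate maximum-principle argument. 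Second, in the fattening case the inclusion ``the whole region between $\Sigma$ and $\Sigma'$ lies in $\Gap(\cC)$'' is the hard direction of $\partial\Gap(\cC)=\Sigma\cup\Sigma'$; it does not follow from the maximum principle but from the structure theory for the level set flow (the analogue of Proposition \ref{prop:inner-outermost-flow} for cones, i.e.\ \cite[Theorem E.2]{CCMS:generic1}), which you should invoke explicitly rather than summarize as ``swept out by the level set flow.'' As a roadmap to the references your proposal is sound; as a proof it leaves exactly these gaps open.
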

Note that the property that $\Sigma,\Sigma'$ minimize $\cE$ from one side implies that both $\Sigma,\Sigma'$ are stable expanders. Furthermore, for $n\geq 7$, $\Sigma,\Sigma'$ could \emph{a priori} have singular set of dimension $n-7$. We say that the \emph{outermost flows of $\cC$ are smooth} if the singular set is empty (so this always holds for $2\leq n\leq 6$). When the outermost flows of $\cC$ are \emph{a priori} known to be smooth the proof of Theorem \ref{ccmsout} carries over to prove the remaining assertions in the theorem.

Let $\Omega = W \cap \mathbb{S}^n$. Recall that $\cC = \cC(S)$ where $S= \partial \Omega \subset \mathbb{S}^n$ is a smooth embedded hypersurface. Let $\nu_S$ be the smooth unit normal vectorfield to $S$ in $\mathbb{S}^n$ pointing to the outside of $\Omega$. Given $\psi:S\to \RR$ a positive, smooth function there exist $\varepsilon>0$ and a smooth local foliation of hypersurfaces $(S_s)_{-\varepsilon<s<\varepsilon}$ in $\mathbb{S}^n$ such that $S_0 = S$ and 
$$\tfrac{\partial}{\partial s} S_s\Big|_{s=0} = \psi \cdot \nu_S \, .$$
We consider the cones $\cC_s:=\cC(S_s)$ and the corresponding outermost expanders $\Sigma_s,\Sigma_s'$. Note that by construction of the outermost flows of $\cC_s$ it follows that for $s>t$ the outermost expander $\Sigma_{s}$ lies strictly to the outside (with respect to $W$) of $\Sigma_{t}$  and $\Sigma_s \to \Sigma_t$ smoothly for $s\searrow t$. Similarly for $s<t$ the innermost expander $\Sigma'_{t}$ lies strictly to the inside of $\Sigma'_{s}$  and $\Sigma'_t \to \Sigma'_s$ smoothly for $s \nearrow t$. 

We denote with $\pi_\Gamma$ the composition of the closest point projection onto $\cC(S)$ composed with the radial projection $\cC(S) \to S$ of the cone onto its link. This is well defined on the cone over a neighborhood of $S$ in $\mathbb{S}^n$. The next lemma then follows from the above discussion together with  \cite[Lemma 2.2]{DS20} and the strong maximum principle.

\begin{lemma}\label{dsvars}
Let $\psi:S \to \RR$ be a positive, smooth function. Then there is a positive Jacobi field $v$ on $\Sigma$ 
that satisfies 
\[ |\nabla_{\Sigma}^\ell v| = O(r^{1-\ell})\]
for $\ell=0,1,2, \dots$, where $r=|x|$. Furthermore, the refined estimate
\[
v = r\cdot \psi\circ \pi_\Gamma + w
\]
with
\[
|\nabla_{\Sigma}^\ell w| = O(r^{-1-\ell})
\]
for $\ell=0,1$ holds. An analogous Jacobi field $v'$ exists on $\Sigma'$ with the same asymptotic expansions. 
\end{lemma}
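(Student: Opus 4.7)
The plan is to realise $v$ as the derivative at $s=0$ of the smooth monotone family of outermost expanders $\Sigma_s$ (viewed as normal graphs over $\Sigma$) associated with the perturbed cones $\cC_s=\cC(S_s)$ recalled just before the statement. Positivity will come from the strict monotonicity $\Sigma_s\supset \Sigma$ for $s>0$, while the leading order asymptotic behaviour of $v$ will be dictated by the leading order motion of $\cC$ along the foliation, whose normal speed at $s=0$ is precisely the linear function $r\cdot \psi\circ \pi_\Gamma$.

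Concretely, I would first write $\Sigma_s$ globally as the normal graph of a smooth function $u_s$ over $\Sigma$. On any compact set this follows from the smooth convergence $\Sigma_s\to\Sigma$ recalled above. To extend to infinity I combine Proposition \ref{prop:DS-decay-expanders} (each of $\Sigma_s,\Sigma$ is an $o(r^{-1})$ normal graph over $\cC_s$, resp.\ $\cC$) with the observation that $\cC_s$ is itself a smooth radial normal graph over $\cC$ whose height equals $s\cdot r\cdot\psi\circ \pi_\Gamma + O(s^{2})r$. This yields a global smooth normal graph $u_s$ over $\Sigma$ with $|u_s|\le C|s|r$ at infinity. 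Setting $v:=\partial_s u_s\big|_{s=0}$ and linearising the expander equation satisfied by $u_s$ gives $L_\Sigma v=0$, with $v\ge 0$ by monotonicity of the family; the strong maximum principle applied to the elliptic equation $L_\Sigma v=0$ then gives $v>0$. The bound $|u_s|\le C|s|r$ gives $|v|=O(r)$, and iterated interior Schauder estimates for $L_\Sigma v=0$ upgrade this to $|\nabla_\Sigma^\ell v|=O(r^{1-\ell})$ for every $\ell\ge 0$.

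For the refined expansion, combining the three asymptotic descriptions above produces, on the end,
\[
u_s = s\cdot r\cdot\psi\circ \pi_\Gamma + o(r^{-1}) + O(s^{2}),
\]
so that differentiating in $s$ shows that $w:=v-r\cdot\psi\circ\pi_\Gamma$ is pointwise $o(r^{-1})$. To sharpen this to $|\nabla_\Sigma^\ell w|=O(r^{-1-\ell})$ for $\ell=0,1$, I would use that $w$ satisfies $L_\Sigma w=F$ on the end, where $F$ arises from the defect between $L_\Sigma$ and the cone operator $L_\cC$ applied to the linear function $r\cdot\psi\circ\pi_\Gamma$. Since $r\cdot\psi\circ\pi_\Gamma$ is the variation field of the cone foliation and hence annihilated on $\cC$ by the conical part of the operator, while $\Sigma$ differs from $\cC$ by $o(r^{-1})$ with all derivatives, $F$ enjoys sharp decay $O(r^{-3})$. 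A standard weighted elliptic bootstrap for $L_\Sigma$ on the asymptotically conical end (exactly as in \cite[Lemma 2.2]{DS20} to which the statement refers) then improves the weak $o(r^{-1})$ pointwise bound on $w$ to the asserted $O(r^{-1-\ell})$ control in $C^{1}$. The Jacobi field $v'$ on $\Sigma'$ is constructed in the identical manner using the family $\Sigma'_s$.

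The main obstacle is the last step: promoting the weak $o(r^{-1})$ decay of $w$ coming directly from Proposition \ref{prop:DS-decay-expanders} to the sharp $O(r^{-1-\ell})$ bound with derivatives. Everything else is a straightforward monotone-family differentiation and Schauder bootstrap; the sharp decay requires analysing $L_\Sigma$ as a drift perturbation of the conical Jacobi operator, and matches precisely the asymptotic expansion machinery of \cite{DS20}.
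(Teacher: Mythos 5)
Your proposal follows essentially the same route as the paper, whose entire proof is the one sentence that the lemma ``follows from the above discussion together with \cite[Lemma 2.2]{DS20} and the strong maximum principle'' --- i.e.\ precisely your combination of the monotone one-sided family $\Sigma_s$, the asymptotic machinery of \cite{DS20} for the refined decay of $w$, and the strong maximum principle for strict positivity. The one caveat is that the paper only records one-sided smooth convergence $\Sigma_s\to\Sigma$ as $s\searrow 0$, not differentiability in $s$, so the step $v:=\partial_s u_s|_{s=0}$ should be replaced by a normalized difference-quotient limit (or by taking the Jacobi field with prescribed conical data directly from \cite[Lemma 2.2]{DS20} and using the monotone family only for the sign); after that routine adjustment your argument matches the intended one.
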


\subsection{Forward rescaled flow}
Given a (smooth) mean curvature flow $(0,T) \ni t \mapsto \cM(t)$ in $\RR^{n+1}$ one obtains a solution to \emph{forward rescaled mean curvature flow} by considering the rescaling
$$ (-\infty, \log(T)) \ni \tau \mapsto \tilde M (\tau) := e^{- \tau/2} \cM(e^\tau)\, ,$$
which satisfies the evolution equation 
    \begin{align*}
    \left(\pd{\mathbf{x}}{\tau}\right)^\perp = \mathbf{H}_{\tilde M(\tau)}(\mathbf{x})-\frac{\mathbf{x}^\perp}{2}.
    \end{align*}
Note that expanders are the stationary points of this evolution.

\section{Expander barriers} \label{sec:expander-barriers}

Let $\cC = \partial W$ denote a smooth cone so that that the outermost flows of $\cC$ are modeled on smooth expanders $\Sigma,\Sigma'$. Assume that the level set flow of $\cC$ fattens (so $\Sigma$ and $\Sigma'$ are distinct). Recall that the level set flow of $\cC$ is given by $\{\sqrt{t}\Gap(\Sigma)\}_{t\in(0,\infty)}$ and $\partial\Gap(\Sigma) = \Sigma\cup\Sigma'$. Below, we work with $\Sigma$ but identical analysis for $\Sigma'$ follows by replacing $W$ with $W'=\overline{W^c}$. 

We choose the unit normal $\nu$ pointing \emph{into} $\Gap(\Sigma)$. 

By Lemma \ref{dsvars}, $\Sigma$ admits a positive Jacobi field of the form $v = r + w$ with $|\nabla^\ell w| = O(r^{-1-\ell})$. We also recall the definition of the first eigenfunction $\phi_R$ in \eqref{eq:first-eignfct-L-fR}.  For $R>0$ large and for $\alpha>0$ to be fixed sufficiently small, we define
\begin{equation}\label{eq:definition-f-alpha}
f_{3R,\alpha} = v + \alpha\phi_{3R}
\end{equation}
on $\Sigma_{3R} := \Sigma\cap B_{3R}(0)$. Then, define 
\begin{equation}\label{eq:definition-h}
h = \max_{\partial \Sigma_R} f_{3R,\alpha} > 0. 
\end{equation}
Then we define a function on all of $\Sigma$ by
\begin{equation}\label{eq:u-barrier}
u(\bx) = \begin{cases}
f_{3R,\alpha} & \bx \in \overline{\Sigma_R}\\
\min\{f_{3R,\alpha},h\} & \bx \in E_{R}\setminus \Sigma_{2R}\\
h & \bx \in E_{2R}\, ,
\end{cases}
\end{equation}
where $E_R:= \Sigma \setminus \overline{B}_{R}(0)$. We want to check that for $s>0$ sufficiently small and $\alpha,R$ chosen appropriately, the (time-independent) family of hypersurfaces $t\mapsto \Gamma_s : = \Graph_\Sigma s u$ is a  supersolution to rescaled mean curvature flow (in the sense that a rescaled mean curvature flow cannot touch $\Gamma_s$ \emph{from below} relative to its unit normal as fixed above). We start by checking that the graphs of $h$ and $f_{3R,\alpha}$ have good intersection. 

\begin{lemma}\label{lemm:subsolns-cross}
There is $R_0=R_0(\Sigma)$ so that for $R\geq R_0$ there is $\alpha_0 = \alpha_0(R,\Sigma)>0$ small so that if $\alpha \in (0,\alpha_0)$ then $h \geq f_{3R,\alpha}$ on $\partial \Sigma_R$ and $h \leq f_{3R,\alpha}$ on $\partial\Sigma_{2R}$. 
\end{lemma}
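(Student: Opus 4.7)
The first inequality $h \geq f_{3R,\alpha}$ on $\partial\Sigma_R$ is tautological: by construction $h = \max_{\partial\Sigma_R} f_{3R,\alpha}$, so $h \geq f_{3R,\alpha}$ pointwise there. The content of the lemma is the second inequality, which is precisely the gluing condition that makes the piecewise definition \eqref{eq:u-barrier} of $u$ continuous across $\partial\Sigma_{2R}$: we want $\min\{f_{3R,\alpha},h\} = h$ on $\partial\Sigma_{2R}$.

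The strategy is to exploit the linear growth of $v$ to produce a gap of order $R$ between the values of $v$ on $\partial\Sigma_R$ and $\partial\Sigma_{2R}$, and then absorb the perturbation $\alpha \phi_{3R}$ into this gap by shrinking $\alpha$.

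Concretely, I would first apply Lemma~\ref{dsvars} to obtain a constant $C = C(\Sigma)$ such that the Jacobi field $v = r + w$ satisfies $|w| \leq C/r$ for $r$ sufficiently large. Taking $R \geq R_1(\Sigma)$ large enough that this expansion is valid for $r \geq R$, this yields
\[
v \leq R + C/R \ \text{on } \partial\Sigma_R, \qquad v \geq 2R - C/(2R) \ \text{on } \partial\Sigma_{2R}.
\]
Since $\phi_{3R}$ is smooth on $\overline{\Sigma_{3R}}$, it is in particular bounded on the compact set $\overline{\Sigma_R}$; set $K(R) := \max_{\overline{\Sigma_R}} \phi_{3R}$. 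Using $\phi_{3R} \geq 0$ (to discard the $\alpha\phi_{3R}$ term on $\partial\Sigma_{2R}$), these give
\[
h \leq R + C/R + \alpha K(R), \qquad f_{3R,\alpha}\big|_{\partial\Sigma_{2R}} \geq 2R - C/(2R).
\]
To conclude, pick $R_0 = R_0(\Sigma) \geq R_1$ with $R_0^2 \geq 3C$, so that for $R \geq R_0$ the difference between the two right-hand sides is at least $R/2 - \alpha K(R)$. Choosing $\alpha_0(R,\Sigma) := R/(2K(R))$, any $\alpha \in (0,\alpha_0)$ makes this nonnegative, giving $h \leq f_{3R,\alpha}$ on $\partial\Sigma_{2R}$, with the claimed dependence of $\alpha_0$ on $R$ and $\Sigma$.

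No genuine obstacle is expected here: the argument is essentially bookkeeping of constants, with the key analytic input being the linear growth of the Jacobi field $v$ coming from Lemma~\ref{dsvars} (the refined decay $|v - r| = O(r^{-1})$ is more than enough, since we only need the $O(r^{-1})$ error to be dominated by the linear gap $R$ for large $R$). The dependence structure $R_0 = R_0(\Sigma)$ then $\alpha_0 = \alpha_0(R,\Sigma)$ is forced by the fact that $K(R)$, and hence the admissible range of $\alpha$, may degenerate as $R \to \infty$.
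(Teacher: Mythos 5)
Your proof is correct and follows essentially the same route as the paper: both use the $h\geq f_{3R,\alpha}$ tautology on $\partial\Sigma_R$, the expansion $v=r+w$ with $|w|=O(r^{-1})$ from Lemma~\ref{dsvars} to create an order-$R$ gap between the two boundary circles, and then shrink $\alpha$ (depending on $R$ through $\phi_{3R}$) to absorb the eigenfunction term. The only cosmetic difference is that you discard $\alpha\phi_{3R}\geq 0$ on $\partial\Sigma_{2R}$ rather than carrying the $\min_{\partial\Sigma_{2R}}\phi_{3R}$ term, which changes nothing.
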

\begin{proof}
The first inequality follows from \eqref{eq:definition-h}. We now observe that (using the decay of $w$ obtained in Lemma \ref{dsvars}) 
\begin{align*}
h & = \max_{\partial\Sigma_R} f_{3R,\alpha}\\
&  \leq R + O(R^{-1}) +\alpha \max_{\partial \Sigma_R} \phi_{3R}\\
& \leq \min_{\partial\Sigma_{2R}} f_{3R,\alpha} - R + O(R^{-1}) + \alpha \left(\max_{\partial \Sigma_R} \phi_{3R} - \min_{\partial \Sigma_{2R}} \phi_{3R}\right). 
\end{align*}
Taking $R$ sufficiently large so that the second and third terms satisfy $- R + O(R^{-1}) \leq -1$ we can then take $\alpha$ sufficiently small (depending on $R$ through the dependence of $\phi_{3R}$) so the fourth term is $\leq 1$. This completes the proof. 
\end{proof}
Thus it suffices to check that $sh$ and $sf_{3R,\alpha}$ define supersolutions on the appropriate overlapping regions (for $s>0$ small). 
\begin{lemma}\label{lemm:const-h}
We can take $R=R_1(\Sigma)$ sufficiently large $h_0 =h_0(\Sigma)>0$ sufficiently small so that for $h \in (0,h_0)$, $\tau\mapsto \Gamma_{R,h} : = \Graph_{E_R} h$ defines a supersolution to rescaled mean curvature flow. 
\end{lemma}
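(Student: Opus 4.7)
The plan is to compute the rescaled mean curvature defect of the static surface $\Gamma_{R,h}$ directly, to show that it is strictly negative on $E_R$ once $R$ is large and $h$ is small, and then to conclude by the strong maximum principle.

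Since $\Gamma_{R,h}$ is a parallel surface at constant normal distance $h$ from $E_R$, at each point $\bx + h\nu(\bx)$ the unit normal of $\Gamma_{R,h}$ is again $\nu(\bx)$, and the principal curvatures transform as $\tilde\kappa_i = \kappa_i/(1-h\kappa_i)$. Expanding the geometric series (valid for $h\|A_\Sigma\|_{L^\infty(\Sigma)}$ small) and combining with the expander equation $H_\Sigma(\bx) = \tfrac12 \bx\cdot\nu$ yields
\[
\tilde H(\bx+h\nu) - \tfrac12(\bx+h\nu)\cdot\nu \;=\; h\bigl(|A_\Sigma(\bx)|^2 - \tfrac12\bigr) + O\bigl(h^2|A_\Sigma(\bx)|^3\bigr).
\]
This is precisely the scalar component, in direction $\nu$, of the rescaled mean curvature vector $\bH_{\Gamma_{R,h}} - \bx^\perp/2$, and its leading term is $L_\Sigma h$ with $L_\Sigma$ as in \eqref{eq:definiton-Jacobi}, consistent with $L_\Sigma$ being the linearization of the expander operator at $\Sigma$.

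The main step is then to produce a strict sign, uniformly on $E_R$. By Proposition~\ref{prop:DS-decay-expanders} one has $|A_\Sigma|(\bx) = O(|\bx|^{-1})$, so one can choose $R_1 = R_1(\Sigma)$ large enough that $|A_\Sigma|^2 \leq 1/4$ on $E_{R_1}$, which makes the linear term bounded above by $-h/4$ there. Since $|A_\Sigma|$ is globally bounded on $\Sigma$ (smoothness together with decay at infinity), the quadratic remainder is at most $C(\Sigma)h^2$, and choosing $h_0 = h_0(\Sigma)$ with $C(\Sigma)h_0 \leq 1/8$ forces the defect to be $\leq -h/8 < 0$ uniformly on $E_R$ for every $R \geq R_1$ and $h \in (0,h_0]$. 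The supersolution property then follows by the standard strong maximum principle applied to a hypothetical first touching point from below: writing the rescaled MCF locally as a graph over $\Gamma_{R,h}$ and comparing the graphical evolution equation with the strictly negative defect yields a contradiction. No serious obstacle arises; the only mildly delicate point is that the $O(h^2|A_\Sigma|^3)$ correction must be dominated by the linear term $-h(1/2-|A_\Sigma|^2)$ \emph{uniformly} on the unbounded set $E_R$, which rests on the global boundedness of $|A_\Sigma|$ on $\Sigma$.
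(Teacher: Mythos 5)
Your proposal is correct and follows essentially the same route as the paper: the defect of the static parallel surface is $(\tfrac12-|A_\Sigma|^2)h+O(h^2)$, one takes $R$ large so that $|A_\Sigma|^2\leq\tfrac14$ on $E_R$ (using the conical asymptotics) and then $h_0$ small to absorb the quadratic error. The only difference is cosmetic: you derive the error term by hand from the parallel-surface curvature formula $\tilde\kappa_i=\kappa_i/(1-h\kappa_i)$, whereas the paper simply specializes its general graphical computation (Proposition \ref{prop:expand-rescaled-mcf-app}) to the constant function $u=h$, for which $\nabla u=0$ kills the gradient part of the error and leaves $|E|\leq C_1h^2$.
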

\begin{proof}
Since $\Sigma$ is asymptotically conical, $|A| < 1$ on $E_R$ for $R$ sufficiently large. Thus, $\Gamma_{R,h}$ is smooth for $R>0$ sufficiently large. We compute
\begin{align*}
v_{\Gamma_{R,h}}(\bx,\tau) \left( \partial_\tau \bx_{\Gamma_{R,h}} - \bH + \frac 12 \bx_{\Gamma_{R,h}}\right)\cdot \nu_{\Gamma_{R,h}} &  = - L_\Sigma h + E(h)\\
& = \left( \frac 12 - |A_\Sigma|^2 \right)h + E(h)\\
& \geq \left( \frac 12 - |A_\Sigma|^2 \right)h - C_1 h^2
\end{align*}
where we used Proposition \ref{prop:expand-rescaled-mcf-app}. Taking $R$ sufficiently large so that $|A_\Sigma|^2 \leq \frac 14$. Then, taking $h_0 =\frac{1}{4C_1}$ completes the proof.
\end{proof}
We now fix $R=R_0>0$ sufficiently large so that both Lemmas \ref{lemm:subsolns-cross} and \ref{lemm:const-h} hold. Then take $\alpha_0$ as in Lemma \ref{lemm:subsolns-cross}. 
\begin{lemma}\label{lemm:subsoln-falphafR}
For any $\alpha \in (0,\alpha_0)$ there is $s_0=s_0(\Sigma,\alpha)>0$ sufficiently small, 
\[
\tau\mapsto \Gamma_{R_0,\alpha,s} : = \Graph_{\Sigma_{2R}} sf_{3R,\alpha}
\]
is a supersolution to rescaled mean curvature flow for any $s\in (0,s_0)$. 
\end{lemma}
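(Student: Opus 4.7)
\medskip

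\noindent\emph{Proof plan for Lemma \ref{lemm:subsoln-falphafR}.}
The plan is to carry out the same calculation as in Lemma \ref{lemm:const-h}, but now on the compact domain $\Sigma_{2R}$ and with the explicit choice $u = s f_{3R,\alpha}$. Applying Proposition \ref{prop:expand-rescaled-mcf-app} (the same identity used in the previous lemma) gives
\[
v_{\Gamma_{R_0,\alpha,s}}\left(\partial_\tau \bx - \bH + \tfrac12 \bx\right)\cdot \nu \;=\; -L_\Sigma(s f_{3R,\alpha}) + E(s f_{3R,\alpha}),
\]
where $E$ is the nonlinear remainder in the graphical expansion, which is at least quadratic in $(u,\nabla u,\nabla^2 u)$. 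The supersolution property we want is that the right-hand side is non-negative pointwise on $\overline{\Sigma_{2R}}$.

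The main point is that linearity of $L_\Sigma$ together with the two defining equations $L_\Sigma v = 0$ (Jacobi field, see Lemma \ref{dsvars}) and $L_\Sigma \phi_{3R} = -\mu_{3R}\phi_{3R}$ (first Dirichlet eigenfunction, see \eqref{eq:first-eignfct-L-fR}) forces the linear term to have a definite sign:
\[
-L_\Sigma(s f_{3R,\alpha}) \;=\; -s\bigl(L_\Sigma v + \alpha L_\Sigma \phi_{3R}\bigr) \;=\; s\alpha\mu_{3R}\phi_{3R}.
\]
Since $\phi_{3R}$ is a positive eigenfunction on $\Sigma_{3R}$ vanishing only on $\partial\Sigma_{3R}$, and $\overline{\Sigma_{2R}}$ is a compact set strictly contained in the interior of $\Sigma_{3R}$, there is a constant $c_0 = c_0(R,\Sigma) > 0$ with $\phi_{3R} \geq c_0$ on $\overline{\Sigma_{2R}}$. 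Hence the leading contribution is bounded below by $s\alpha\mu_{3R}c_0 > 0$.

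It remains to absorb the error term. The function $f_{3R,\alpha}$ and its derivatives up to second order are uniformly bounded on the compact set $\overline{\Sigma_{2R}}$ by a constant $C_1 = C_1(\Sigma,R,\alpha)$, and the second fundamental form of $\Sigma$ is also bounded there. Since Proposition \ref{prop:expand-rescaled-mcf-app} provides an error of the form $|E(sf_{3R,\alpha})| \leq C_2\, s^2$ on $\overline{\Sigma_{2R}}$, with $C_2 = C_2(\Sigma,R,\alpha)$, we conclude
\[
-L_\Sigma(sf_{3R,\alpha}) + E(sf_{3R,\alpha}) \;\geq\; s\alpha\mu_{3R}c_0 - C_2 s^2,
\]
which is non-negative for all $s \in (0,s_0)$ provided we choose $s_0 := \alpha\mu_{3R}c_0/C_2$.

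The only real subtlety is verifying that $E$ genuinely contributes at order $s^2$ uniformly on $\overline{\Sigma_{2R}}$; this is standard but relies on the smoothness of $\Sigma$ and $\phi_{3R}$ on this compact set, and is the only place the fixed (large) choice of $R = R_0$ and the compactness of the domain enter — without them we would lose the quadratic control on $E$. Beyond this, the argument is a direct spectral linearization, and no further obstacles arise.
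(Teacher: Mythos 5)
Your proof is correct and follows essentially the same route as the paper: apply Proposition \ref{prop:expand-rescaled-mcf-app}, use $L_\Sigma v=0$ and $L_\Sigma\phi_{3R}=-\mu_{3R}\phi_{3R}$ to get the linear term $s\alpha\mu_{3R}\phi_{3R}$, bound $\phi_{3R}$ below on the compact set $\overline{\Sigma_{2R}}$, and absorb the $O(s^2)$ error for small $s$. The only (minor) point the paper makes explicitly that you leave implicit is that $\Gamma_{R_0,\alpha,s}$ is a smooth graph for $s$ small by compactness of $\Sigma_{2R}$.
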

\begin{proof}
Since $\Sigma_{2R}$ is compact, $\Gamma_{R,\alpha,s}$ will be smooth as long as $s$ is sufficiently small. Moreover, we have
\begin{align*}
v_{\Gamma_{R,\alpha,s}}(\bx,\tau) \left( \partial_\tau \bx_{\Gamma_{R,\alpha,s}} - \bH + \frac 12 \bx_{\Gamma_{R,s}}\right)\cdot \nu_{\Gamma_{R,s}} &  = - s  L_\Sigma f_{3R,\alpha} + E(s f_{3R,\alpha})\\
& = s \alpha \mu_{3R} \phi_{3R} + E(s f_{3R,\alpha}). 
\end{align*}
Now we observe that since $\phi_{3R} >0$ on $\Sigma_{3R}$, it holds that $\inf_{\Sigma_{2R}}\phi_{3R}>0$. Combined with $\mu_{3R}>0$ and the simple error estimate $ E(s f_{3R,\alpha}) = O(s^2)$ (cf.\ Proposition \ref{prop:expand-rescaled-mcf-app}), the assertion follows. 
\end{proof}

We now fix $\alpha\in(0,\alpha_0)$ and $s_0=s_0(\Sigma,\alpha)>0$ as in Lemma \ref{lemm:subsoln-falphafR}. Combining Lemmas \ref{lemm:subsolns-cross}, \ref{lemm:const-h}, and \ref{lemm:subsoln-falphafR} we obtain (recalling the definition of $u$ in \eqref{eq:u-barrier})
\begin{prop}\label{prop:fattening-barrier-final}
There is $s_0>0$ sufficiently small so that  $\tau\mapsto \Gamma_s : = \Graph_\Sigma su$ is a  supersolution to rescaled mean curvature flow for any $s\in(0,s_0)$. 
\end{prop}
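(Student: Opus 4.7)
The plan is to realize $\Gamma_s$ as the ``lower envelope'' (with respect to the chosen normal $\nu$ into $\Gap(\Sigma)$) of the two smooth families already shown to be supersolutions, and to invoke the standard Meeks--Yau-style principle that the infimum of two supersolutions is a viscosity supersolution. Specifically, on the transition annulus $\Sigma_{2R}\setminus\overline{\Sigma_R}$ the function $u=\min\{f_{3R,\alpha},h\}$ coincides at the boundary with the values matched in the other two regions by Lemma~\ref{lemm:subsolns-cross}: on $\partial\Sigma_R$ one has $h\geq f_{3R,\alpha}$, so $\min=f_{3R,\alpha}$, matching $u|_{\overline{\Sigma_R}}$, while on $\partial\Sigma_{2R}$ one has $h\leq f_{3R,\alpha}$, so $\min=h$, matching $u|_{E_{2R}}$. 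Hence $\Gamma_s$ is a continuous hypersurface, smooth except possibly along the (closed) set $\{f_{3R,\alpha}=h\}$ in the transition annulus where a downward corner appears.

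To verify the supersolution property, suppose for contradiction that some smooth rescaled mean curvature flow $\tau\mapsto \tilde N(\tau)$ touches $\Gamma_s$ at a spacetime point $(p,\tau_0)$ from the $\Sigma$-side of $\Gamma_s$ (i.e., from ``below'' relative to $\nu$), and split into cases by the location of $p$. If $p$ projects to $\Sigma_R$ or to $E_{2R}$, then near $p$ the hypersurface $\Gamma_s$ agrees with $\Graph_\Sigma(sf_{3R,\alpha})$ or $\Graph_\Sigma(sh)$ respectively, contradicting Lemma~\ref{lemm:subsoln-falphafR} or Lemma~\ref{lemm:const-h}. If $p$ projects to the transition annulus, then by construction $su(q)\leq s f_{3R,\alpha}(q)$ and $su(q)\leq sh(q)$ for every $q$ in a neighborhood of the projection of $p$, and therefore $\tilde N(\tau)$ also lies on the $\Sigma$-side of both $\Graph_\Sigma(sf_{3R,\alpha})$ and $\Graph_\Sigma(sh)$ near $(p,\tau_0)$. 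Since $u(p)$ equals either $f_{3R,\alpha}(p)$ or $h(p)$ (or both, at a corner), the tangency point $p$ lies on one of these smooth graphs, and tangency from below there again contradicts the relevant lemma. This exhausts all cases and forces no such touching point to exist, which is precisely the avoidance statement encoded by ``$\Gamma_s$ is a supersolution to rescaled mean curvature flow.''

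The main subtlety—and the place a careful reader will want to pause—is the corner set $\{f_{3R,\alpha}=h\}$ in the transition annulus: one must justify that the naive ``minimum of two supersolutions'' principle is robust enough to exclude a touching point exactly on the corner, where $\Gamma_s$ is only Lipschitz. This can be handled either by perturbing the smooth flow $\tilde N$ slightly inward to reduce to strict subsolution/supersolution inequalities (using the strict inequalities in the error terms obtained in Lemmas~\ref{lemm:const-h} and~\ref{lemm:subsoln-falphafR}, with room coming from the positivity of $\mu_{3R}\inf_{\Sigma_{2R}}\phi_{3R}$ and of $\tfrac14 h$ respectively, after shrinking $s$ and $h$), or by directly appealing to the standard viscosity/avoidance machinery for classical flows as in Meeks--Yau \cite{MeeksYau} and the references cited in Section~\ref{sec:prelim}. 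Either way, the argument is short, and the proposition follows immediately once the three preparatory lemmas are in place.
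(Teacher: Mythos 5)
Your proposal is correct and is essentially the paper's argument: the paper's proof consists precisely of combining Lemmas \ref{lemm:subsolns-cross}, \ref{lemm:const-h}, and \ref{lemm:subsoln-falphafR} and welding the two graphs in the sense of Meeks--Yau, which is exactly the minimum-of-two-supersolutions construction you spell out (including the correct use of the crossing lemma to match the piecewise definition of $u$ and the reduction of a touching point at the corner set to a touching point on one of the smooth graphs).
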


\section{Fattening} \label{fattening}

We consider $M\subset \Rb{n+1}$ with an isolated singularity at $\bOh$ modeled on a smooth cone $\cC$. Let $U$ be the compact region bounded by $M$ and write $U' = \overline{U^c}$. Define $W,W' \subset \Rb{n+1}$ closed so that $\partial W =\partial W'= \cC$ and $\lim_{\rho\to\infty} \rho U = W$, $\lim_{\rho\to\infty} \rho U' = W'$ in the sense of local Hausdorff convergence. We let $\Sigma=\partial F_1(W)$ denote the outer expander and $\Sigma'=\partial F_1(W')$ the inner expander. 


Approximate $M$ to the inside and outside by smooth hypersurfaces $M_i' \subset U,M_i\subset U'$ satisfying conditions (1)-(5) described in Section \ref{sec:outermost}. Let $\cM_i,\cM'_i$ denote unit-regular cyclic Brakke flows with $\cM_i(0) = \cH^n\lfloor M_i,\cM_i'(0) = \cH^n\lfloor M_i'$ obtained via elliptic regularization. Passing to a subsequence we can assume that $\cM_i \rightharpoonup \cM,\cM_i'\rightharpoonup \cM'$ unit regular cyclic Brakke flows with $\cM(0) = \cM'(0)= \cH^n\lfloor M$.

\begin{theorem}\label{theo:fattening}
The flow $\cM$ is modeled on $\Sigma$ near $(\bOh,0)$ in the sense that $\lim_{\lambda \to \infty} \cD_\lambda(\cM) = \cM_\Sigma$. Similarly $\cM'$ is modeled on $\Sigma'$ in the sense that $\lim_{\lambda \to \infty} \cD_\lambda(\cM') = \cM_{\Sigma'}$.
\end{theorem}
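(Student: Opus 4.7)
I focus on $\cM$; the argument for $\cM'$ is symmetric. It suffices to show that every subsequential limit of $\{\cD_{\lambda_i}(\cM)\}$ as $\lambda_i\to\infty$ equals $\cM_\Sigma$; the full convergence then follows.

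First, Brakke compactness applied to $\cD_{\lambda_i}(\cM)$---with uniform local mass bounds via Huisken's monotonicity formula and the entropy bounds inherited from the elliptic-regularisation construction of $\cM$---yields a unit-regular integral Brakke flow $\cM_\infty$ as a subsequential limit, with initial datum $\cM_\infty(0) = \cH^n\lfloor\cC$ by the asymptotic conicity $\lim_{\lambda\to\infty}\lambda M = \cC$ in $C^\infty_{\mathrm{loc}}$. Since the spacetime support of $\cM$ is a weak set flow generated by $M$ (\cite[10.5]{ilmanen}) and rescaling preserves the weak set flow property, stability of weak set flows under local Hausdorff convergence of initial data gives
\[
\supp\cM_\infty(t) \subset F_t(\cC) = \sqrt{t}\,\overline{\Gap(\cC)}, \quad t > 0.
\]

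Next, let $\nu$ be the unit normal of $\Sigma$ pointing into $\Gap(\cC)$, and for each $s\in(0,s_0)$ let $\Gamma_s = \Graph_\Sigma(su)$ be the stationary supersolution to rescaled MCF from Proposition \ref{prop:fattening-barrier-final}, sitting on the $\nu$-side of $\Sigma$. Since $\Sigma,\Sigma'$ are both smoothly asymptotic to $\cC$, the width of $\Gap(\cC)\cap\{|\bx|>R\}$ decays to zero as $R\to\infty$, while $\Gamma_s$ stands at height $\sim sh$ above $\Sigma$. Hence for $R_s$ sufficiently large, $\sqrt{t}\,\overline{\Gap(\cC)}\cap\{|\bx|>R_s\sqrt{t}\}$ lies strictly on the $(-\nu)$-side of $\sqrt{t}\,\Gamma_s$, and therefore so does the corresponding restriction of $\supp\cM_\infty(t)$. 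To propagate this bound into $\{|\bx|\le R_s\sqrt{t}\}$, I work in the rescaled flow $\tilde\cM_\infty(\tau) = e^{-\tau/2}\cM_\infty(e^\tau)$, where $\Gamma_s$ is time-independent, and apply the avoidance principle for integral Brakke flows against the smooth supersolution $\Gamma_s$ of rescaled MCF (cf.\ \cite{White:Brakke,ilmanen}): a first-contact analysis, with the far-field control serving as boundary data at infinity, forbids any interior first-contact point via the strict supersolution inequality established in Proposition \ref{prop:fattening-barrier-final}. This yields that $\supp\tilde\cM_\infty(\tau)$ lies on the $(-\nu)$-side of $\Gamma_s$ for every $\tau\in\RR$.

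Intersecting with the containment $\supp\tilde\cM_\infty(\tau)\subset\overline{\Gap(\cC)}$ confines $\supp\tilde\cM_\infty(\tau)$ to the closed strip between $\Sigma$ and $\Gamma_s$; letting $s\searrow 0$ collapses the strip onto $\Sigma$, giving $\supp\tilde\cM_\infty(\tau)\subset\Sigma$. Unit-regularity of $\cM_\infty$ and smoothness of $\Sigma$ then force $\tilde\cM_\infty(\tau) = \cH^n\lfloor\Sigma$ with multiplicity one, i.e.\ $\cM_\infty = \cM_\Sigma$ as Brakke flows. The principal technical challenge is the first-contact analysis: the initial datum $\cC$ is \emph{not} globally on the $(-\nu)$-side of $\Gamma_s$---the cone crosses the barrier near the origin, where the rounded tip of $\Sigma$ is bounded away from $\cC$---so avoidance cannot be invoked naively at $t=0^+$. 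The rescaled picture (in which $\Gamma_s$ is stationary) together with the strict narrowness of $\Gap(\cC)$ at infinity furnishes the control needed to close the argument via Brakke flow avoidance.
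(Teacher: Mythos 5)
Your overall strategy --- containment of the forward blowup in $F_t(\cC)=\sqrt{t}\,\Gap(\cC)$, far-field control below $\sqrt{t}\,\Gamma_s$ from the thinness of the gap at infinity, a barrier argument to push the flow below $\Gamma_s$ everywhere, and then the squeeze $s\to 0$ followed by the constancy theorem plus unit-regularity --- is the paper's strategy. But the step you yourself flag as ``the principal technical challenge'' is a genuine gap, not a technicality, and it cannot be closed with the information you have assembled. The avoidance/first-contact argument against the stationary supersolution $\Gamma_s$ in the rescaled picture requires some time $\tau_1$ at which $\supp\tilde\cM_\infty(\tau_1)$ lies below $\Gamma_s$ throughout the compact comparison region $B_\rho(\bOh)$; you have no such time, since for every $\tau$ all you know is $\supp\tilde\cM_\infty(\tau)\subset \Gap(\cC)$, and near the origin the gap protrudes far above $\Gamma_s$ (its width there is $O(1)$ while $\Gamma_s$ sits at height $O(s)$ over $\Sigma$). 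The decisive symptom: every property you establish for $\cM_\infty$ --- unit-regular integral Brakke flow with initial support $\cC$, support contained in $\sqrt{t}\,\Gap(\cC)$ and hence below $\sqrt{t}\,\Gamma_s$ outside $B_{R_s\sqrt{t}}(\bOh)$ --- is also satisfied by $\cM_{\Sigma'}$, the self-expanding flow of the \emph{other} outermost expander. So no argument using only these properties can single out $\cM_\Sigma$; an input that sees the one-sidedness of the outer flow is indispensable.

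The paper supplies that input by running the comparison not for the blowup limit but for the approximating flows $\cM_i$ starting from smooth hypersurfaces $M_i\subset U'$ that avoid $\bOh$: for each $i$ there is $t(i)>0$ with $\supp\cM_i(t)\cap B_{\sqrt{t}\rho}(\bOh)=\emptyset$ for $t\le 2t(i)$, so the containment below $\sqrt{t}\,\Gamma_s$ holds vacuously in the ball at time $t(i)$ and is then propagated forward to a uniform time $T(s,\rho)$ by the maximum principle, using boundary control on $\partial B_{\sqrt{t}\rho}(\bOh)$ (itself obtained by a compactness argument uniform in $i$, via containment of rescaled subsequential limits of the $\cM_i$ in $W'(t)$). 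Only after this does one let $i\to\infty$, then $\rho\to\infty$ and $s\to 0$. To repair your write-up you must import the approximating flows (or an equivalent one-sided initialization) before invoking avoidance; the ``strict narrowness of $\Gap(\cC)$ at infinity'' provides boundary data for the comparison but cannot substitute for the missing initial condition.
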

In particular, this shows that if $\cC$ fattens under the level-set flow then so does $M$.

Before proving Theorem \ref{theo:fattening} we observe the following consequence. Recall that $M(t)$ (resp.\ $M'(t)$) is the outer (resp.\ inner) flow of $M$ as defined in Proposition \ref{prop:inner-outermost-flow}.  
\begin{corollary}\label{coro:smoothness}
There is $T>0$ so that $\cM\lfloor\{t<T\}$ and $\cM'\lfloor\{t<T\}$ are smooth and for $0\leq t <T$, we have $\supp \cM(t) = M(t)$ (resp.~$\supp \cM'(t) = M'(t)$). Furthermore, any unit regular integral Brakke flow $\check\cM$ with $\check\cM(0) = \cH^n\lfloor M$ and $\supp\check\cM(t) \subset M(t)$ satisfies $\check\cM\lfloor\{t<T\} = \cM\lfloor \{t<T\}$.\end{corollary}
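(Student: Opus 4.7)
The plan is to prove the three assertions in turn, using Theorem \ref{theo:fattening} and Proposition \ref{prop:inner-outermost-flow} as the main inputs.

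For the smoothness of $\cM\lfloor\{t<T\}$ (and analogously $\cM'\lfloor\{t<T\}$), I would argue by contradiction. If smoothness failed on any time interval $\{0<t<T\}$, there would be a sequence of singular spacetime points $(\bx_k,t_k)$ of $\cM$ with $t_k\to 0$. If $\bx_k\to\bx_\infty\ne\bOh$, then near $\bx_\infty$ the initial surface $M$ is smooth and short-time smooth MCF combined with the local regularity theorem (using unit regularity of $\cM$) forces smoothness near $(\bx_k,t_k)$, a contradiction. If $\bx_k\to\bOh$ with $|\bx_k|/\sqrt{t_k}$ bounded, then rescaling at $(\bOh,0)$ by $\lambda_k=1/\sqrt{t_k}\to\infty$ and invoking Theorem \ref{theo:fattening}, the rescaled flows converge to $\cM_\Sigma$ at the points $(\bx_k/\sqrt{t_k},1)$, where $\cM_\Sigma$ is smooth of multiplicity one; by closedness of the unit-regular class \cite[Theorem 4.2]{SchulzeWhite} and the local regularity theorem, $\cM$ is smooth near $(\bx_k,t_k)$, a contradiction. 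If instead $\bx_k\to\bOh$ with $|\bx_k|/\sqrt{t_k}\to\infty$, one rescales by $\lambda_k=1/|\bx_k|\to\infty$: the rescaled spacetime point is $(\bx_k/|\bx_k|,t_k/|\bx_k|^2)$ with second coordinate $\to 0$ and first coordinate (subsequentially) converging to a smooth point of the cone $\cC$, so pseudolocality in the rescaled picture yields smoothness, again a contradiction. This gives the existence of a uniform $T>0$ with $\cM\lfloor\{0<t<T\}$ smooth.

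For $\supp\cM(t)=M(t)$, I would invoke Proposition \ref{prop:inner-outermost-flow}. By construction in Section \ref{fattening}, $\cM$ is a subsequential Brakke limit of elliptic-regularization flows from smooth outer approximations $M_i$, and hypersurfaces with isolated conical singularities are admissible and smoothable in the sense of Section \ref{sec:outermost} (cf.~Section \ref{subsec:cones}). The proposition identifies the spacetime support of $\cM$ with the spacetime sweep $\cup_t M(t)\times\{t\}$ of the outer flow; slicing in time gives the claim, and $\cM'$ is entirely analogous.

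For the uniqueness assertion, by the first two parts $\cM(t)=M(t)$ is a smooth hypersurface for $0<t<T$ and $\supp\check\cM(t)\subset\cM(t)$. At each smooth point $\bx_0\in M\setminus\{\bOh\}$, classical short-time existence for smooth MCF together with the local regularity theorem (using unit regularity of $\check\cM$) gives $\check\cM=\cM$ with multiplicity one in a spacetime neighbourhood of $(\bx_0,0)$; compactness over $M\setminus B_{r_0}(\bOh)$ yields $\check\cM=\cM$ on $(\RR^{n+1}\setminus B_{r_0}(\bOh))\times[0,T')$ for some $T'>0$. On the remaining region $B_{r_0}(\bOh)\times(0,T')$, $\check\cM(t)$ is an integer-multiplicity varifold supported on the smooth manifold $\cM(t)\cap B_{r_0}(\bOh)$, and the constancy theorem forces $\check\cM(t)=k(t)\cH^n\lfloor\cM(t)$ for a positive integer $k(t)$, constant on each connected component of $\cM(t)$. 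Since near $\bOh$ the slice $\cM(t)$ is modelled on the rescaled smooth connected outermost expander $\sqrt{t}\Sigma$ (a topological plane, by Ilmanen--White), each component of $\cM(t)$ extends beyond $B_{r_0}(\bOh)$ into the exterior region where $\check\cM=\cM$ with multiplicity one, pinning $k(t)\equiv 1$. Hence $\check\cM\lfloor\{t<T'\}=\cM\lfloor\{t<T'\}$; taking $T$ no larger than $T'$ gives the statement. I expect the main obstacle to be precisely this last step of propagating equality across the conical singularity at $(\bOh,0)$, which is resolved by combining classical smooth-MCF uniqueness on the exterior with the constancy theorem and the connected topology of $\Sigma$.
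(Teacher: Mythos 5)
Your proposal is correct and follows essentially the same route as the paper: a blow-up contradiction argument via Theorem \ref{theo:fattening} (splitting on whether $|\bx_k|^2/t_k$ is bounded) for smoothness, Proposition \ref{prop:inner-outermost-flow} for $\supp\cM(t)=M(t)$, and a constancy-theorem plus multiplicity-one argument for uniqueness. The only cosmetic differences are that you invoke pseudolocality in the $|\bx_k|/\sqrt{t_k}\to\infty$ case where the paper simply uses $\sing\cM_\Sigma=(\bOh,0)$ together with upper semicontinuity of density, and you propagate multiplicity one inward via connectivity rather than via the paper's combination of time-monotonicity of the multiplicity with Huisken's monotonicity formula and unit regularity.
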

\begin{proof}
It suffices to consider $\cM$ and outer flows. Suppose there are points $X_i = (\bx_i,t_i) \in \sing \cM$ with $0<t_i\to0$. Since $M$ is smooth away from $\bOh$, it must hold that $\bx_i\to\bOh$. Suppose that up to a subsequence $\supp_i |\bx_i|^2 t_i^{-1} <\infty$. Then by Theorem \ref{theo:fattening}, $\cD_{t_i^{-1/2}}(\cM) \rightharpoonup \cM_{\Sigma}$ as $i\to\infty$. Since $\cD_{t_i^{-1/2}}(X_i) = (\bx_i t_i^{-1/2},1)$ is bounded (by our assumption), this contradicts the fact that $\Sigma$ is smooth. Thus, it remains to consider the case that $|\bx_i|^2 t_i^{-1} \to\infty$. By Theorem \ref{theo:fattening} again, $\cD_{|\bx_i|^{-1}}(\cM)\rightharpoonup \cM_{\Sigma}$. Up to a subsequence, $\cD_{|\bx_i|}(X_i) = (|\bx_i|^{-1} \bx_i ,|\bx_i|^{-2} t_i)$ converges to $(\tilde \bx,0)$ with $|\tilde \bx| = 1$. Since $\sing\cM_\Sigma = (\bOh,0)$, this is a contradiction. This proves the smoothness part of the assertion. 

By the work of Hershkovits--White as recalled in Proposition \ref{prop:inner-outermost-flow}, the support of $\cM$ agrees with the outer flow of $M$. The final statement follows since $t\mapsto M(t)$ is a smooth mean curvature flow for $0<t<T$, so the constancy theorem implies that the multiplicity of $\check \cM$ is a non-negative constant for a.e.~$0<t<T$, which additionally is monotone in time. But the monotonicity formula together with unit regularity implies that the multiplicity is one away from $(\bOh,0)$, so $\check \cM$ agrees with $\cM$.
\end{proof}

\begin{proof}[Proof of Theorem \ref{theo:fattening}]
It suffices to consider the outer flow $\cM$. Fix $s\in (0,s_0)$ and let $\Gamma_s$ be defined as in Proposition \ref{prop:fattening-barrier-final} with respect to $\Sigma$. Recall that $t\mapsto \sqrt{t} \Gamma_s$ is a supersolution to mean curvature flow and that the unit normal to $\Sigma$ points into $\Gap(\cC)$.  

Using the notation established in the beginning of this section, we have:

\begin{claim}\label{claim:blowup-outer-aprooxrs}
For $i(j)\to\infty$ and $\lambda_j\to\infty$ assume that $\tilde\cM_j:=\cD_{\lambda_j}(\cM_{i(j)}) \rightharpoonup \tilde\cM$. Then $\supp \tilde\cM(t) \subset W'(t)$ (the level set flow of $W'$). 
\end{claim}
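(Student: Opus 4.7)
I would prove the claim in two steps. First, establish the time-zero containment $\supp\tilde\cM(0)\subset W'$. Second, propagate this to all times using (a) the fact, recalled in Section \ref{sec:prelim} (and essentially \cite[10.5]{ilmanen}), that the spacetime support of a Brakke flow is a weak set flow generated by its initial support, and (b) monotonicity of the level set flow in its initial data.

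\textbf{Initial containment.} Because the outer approximations satisfy $M_{i(j)}\subset U'$, we have $\supp\tilde\cM_j(0)=\lambda_j M_{i(j)}\subset \lambda_j U'$. The hypothesis that $M$ has an isolated conical singularity at $\bOh$ modeled on $\cC$ is precisely the statement $\lambda U'\to W'$ in local Hausdorff sense as $\lambda\to\infty$. Thus for any $x_0\in\RR^{n+1}\setminus W'$ (an open set), a small ball $B_r(x_0)$ is disjoint from $\lambda_j U'$ for all $j$ sufficiently large, so the initial measures $\mu_j(0)=\cH^{n}\lfloor \lambda_j M_{i(j)}$ satisfy $\mu_j(0)(B_r(x_0))=0$ eventually. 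Weak convergence $\mu_j(0)\rightharpoonup\tilde\cM(0)$ then gives $\tilde\cM(0)(B_r(x_0))=0$, hence $x_0\notin\supp\tilde\cM(0)$.

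\textbf{Propagation.} Since $\supp\tilde\cM$ is a weak set flow generated by $\supp\tilde\cM(0)$, maximality of the level set flow yields $\supp\tilde\cM(t)\subset F_t(\supp\tilde\cM(0))$ for every $t\geq 0$. Monotonicity of the level set flow ($A\subset B\implies F_t(A)\subset F_t(B)$) follows by direct induction on the layers in the construction recalled in Section \ref{subsec:level-set-flow}: a classical flow that is disjoint from $B$ is \emph{a fortiori} disjoint from $A\subset B$, so the classical flows admissible at each level for $B$ form a subset of those for $A$. Combining,
\[
\supp\tilde\cM(t)\subset F_t(\supp\tilde\cM(0))\subset F_t(W')=W'(t),
\]
which is the conclusion.

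\textbf{Main obstacle.} The argument is a soft combination of two standard facts, and I do not anticipate any serious difficulty. In particular, one need not identify the precise limit of $\lambda_j M_{i(j)}$ (which in general depends on the diagonal subsequence); only the cruder containment $\lambda_j M_{i(j)}\subset \lambda_j U'\to W'$ is used.
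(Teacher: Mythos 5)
Your proposal is correct and follows essentially the same route as the paper: the paper's proof is exactly ``$\supp\tilde\cM(0)\subset W'$ by construction, then apply the avoidance/inclusion principle for Brakke flows'' (citing Ilmanen), and your two steps simply unpack both halves — the initial containment via $M_{i(j)}\subset U'$ and the local Hausdorff convergence $\lambda U'\to W'$, and the propagation via the weak-set-flow property of Brakke flow supports together with maximality and monotonicity of the level set flow. No gaps.
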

\begin{proof}
By construction, $\supp\tilde\cM(0) \subset W'$. Thus, the assertion follows from the avoidance principle for Brakke flows (cf.\ \cite[Theorem 10.7]{ilmanen}). \end{proof}

\begin{claim}\label{claim:boundary-true-1}
There is $\rho_0=\rho_0(s) <\infty$ so that $W'(1) \setminus B_{\rho_0}(\bOh)$ lies strictly below $\Gamma_s$. 
\end{claim}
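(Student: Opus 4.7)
\emph{Proof proposal.}

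The plan is to use the $o(|\bx|^{-1})$ asymptotic decay of the outermost expanders $\Sigma, \Sigma'$ toward $\cC$ (Proposition \ref{prop:DS-decay-expanders}) together with the fact that $u \equiv h > 0$ on $E_{2R_0}$ to show that, in the asymptotic region, $\Gamma_s$ sits at $\nu_\cC$-signed height $\approx sh$ on the $W$-side of $\cC$, while $W'(1)$ is pinned to the $W'$-side of $\Sigma'$ (and hence also to the $W'$-side of $\Gamma_s$).

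Concretely, I would fix a smooth unit normal $\nu_\cC$ on $\cC$ oriented so that $\nu_\Sigma \to \nu_\cC$ along the ends of $\Sigma$; with this convention $\nu_\cC$ points from $W'$ into $W$. By Theorem \ref{ccmsout}, $\Sigma$ lies on the $W'$-side of $\cC$ and $\Sigma'$ lies on the $W$-side, with $\Gap(\cC)$ the closed region between them. By Proposition \ref{prop:DS-decay-expanders} applied to each, there is $R_1 \geq 2R_0$ so that $\Sigma \setminus B_{R_1}(\bOh)$ and $\Sigma' \setminus B_{R_1}(\bOh)$ are normal graphs over $\cC$ with signed heights $\sigma_\Sigma < 0 < \sigma_{\Sigma'}$ satisfying $|\sigma_\Sigma|, |\sigma_{\Sigma'}| = o(|\bx|^{-1})$, with the normal fields $\nu_\Sigma, \nu_{\Sigma'}$ converging to $\nu_\cC$ at matching rates. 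Since $u \equiv h$ on $E_{2R_0}$, the end of $\Gamma_s$ is $\{p + sh \nu_\Sigma(p) : p \in \Sigma,\ |p| \geq 2R_0\}$, which by these estimates is itself a normal graph over $\cC$ with signed height $\sigma_{\Gamma_s} = sh + o(1)$. For $\rho_0 = \rho_0(s)$ sufficiently large, $\sigma_{\Gamma_s} \geq sh/2$ over the corresponding part of $\cC$.

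Next, Theorem \ref{ccmsout} identifies $W'(1) = \overline{W' \cup \Gap(\cC)}$. In the asymptotic graphical tubular neighborhood of $\cC$ outside $B_{R_1}(\bOh)$, $\Gap(\cC)$ consists precisely of points with signed $\nu_\cC$-height in $[\sigma_\Sigma, \sigma_{\Sigma'}]$, so any point of $W'(1)$ lying in this neighborhood has signed height at most $\sigma_{\Sigma'} \leq c|\bx|^{-1}$ for some constant $c$; any point of $W'(1)$ lying outside this neighborhood is in $W'$ proper and has non-positive signed $\nu_\cC$-height from $\cC$. In either case, taking $\rho_0$ large enough so that $c\rho_0^{-1} < sh/2$ forces the signed height of a point of $W'(1) \cap \{|\bx|\geq\rho_0\}$ to be strictly less than $sh/2 \leq \sigma_{\Gamma_s}$, yielding the desired strict separation. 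The main technical point will be verifying the graphical representation of $\Gamma_s$ over $\cC$ starting from its definition as a graph over $\Sigma$: this uses that $sh$ is a small but fixed positive quantity while the $\Sigma$-over-$\cC$ height and derivatives vanish at infinity, so that composing graphs causes no reparametrization trouble.
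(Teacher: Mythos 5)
Your argument is correct and is essentially the paper's: the proof comes down to the fact that $W'(1)$ is the region below $\Sigma'$, that $\Sigma$ and $\Sigma'$ both decay to $\cC$ at rate $o(|\bx|^{-1})$ (Proposition \ref{prop:DS-decay-expanders}), and that $u\equiv h>0$ on $E_{2R}$, so $\Gamma_s$ sits at definite height $\approx sh$ while $\Sigma'$ does not. Your intermediate assertion that $\sigma_\Sigma<0<\sigma_{\Sigma'}$ (i.e.\ that the two expanders lie on opposite sides of $\cC$ near infinity) is not justified and need not hold, but it is harmless since your final estimate only uses $|\sigma_{\Sigma'}|=o(|\bx|^{-1})$.
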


\begin{proof}
Recall that $W'(1)$ is the region below $\Sigma'$. Thus, the assertion follows from the fact that $\Sigma$ decays towards $\Sigma'$ (cf.\ Proposition \ref{prop:DS-decay-expanders}) whereas the function  $u$ in \eqref{eq:u-barrier} is constant near infinity.
\end{proof}

\begin{claim} \label{claim:boundary-true-2}
Fix $\rho\geq \rho_0$. There's $T=T(s,\rho)$ so that $\supp\cM_i(t) \cap B_{\sqrt{t}\rho}(\bOh)$ lies below $\sqrt{t}\Gamma_s$ for all $t \in (0, T]$ and $i \in \NN$.
\end{claim}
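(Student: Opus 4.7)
I argue by contradiction, passing to a parabolic blow-up and invoking the avoidance principle against the classical supersolution $t \mapsto \sqrt{t}\Gamma_s$ from Proposition \ref{prop:fattening-barrier-final}. Suppose the conclusion fails for every $T > 0$. Then I extract sequences $t_j \searrow 0$, $i_j \in \NN$, and $\bx_j \in \supp\cM_{i_j}(t_j) \cap B_{\sqrt{t_j}\rho}(\bOh)$ with $\bx_j$ strictly above $\sqrt{t_j}\Gamma_s$. Parabolically rescaling by $\lambda_j := t_j^{-1/2}$, I set $\tilde\cM_j := \cD_{\lambda_j}(\cM_{i_j})$ and $\tilde\bx_j := \lambda_j \bx_j$; since this rescaling sends $\sqrt{t_j}\Gamma_s$ to $\Gamma_s$, the point $\tilde\bx_j \in \supp\tilde\cM_j(1) \cap \overline{B_\rho}$ lies above $\Gamma_s$.

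By Brakke compactness and upper semicontinuity of support, a subsequence yields $\tilde\cM_j \rightharpoonup \tilde\cM$ (an integral Brakke flow) and $\tilde\bx_j \to \tilde\bx \in \supp\tilde\cM(1) \cap \overline{B_\rho}$ with $\tilde\bx$ on or above $\Gamma_s$ by closedness. Claim \ref{claim:blowup-outer-aprooxrs} gives $\supp\tilde\cM(t) \subset F_t(W') = \sqrt{t}F_1(W')$ for every $t > 0$; combining with the rescaled version of Claim \ref{claim:boundary-true-1} shows that $\supp\tilde\cM(t) \setminus B_{\sqrt{t}\rho_0}(\bOh)$ already lies strictly below $\sqrt{t}\Gamma_s$ for every $t > 0$. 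The plan is then to apply the avoidance principle for the Brakke flow $\tilde\cM$ against the classical supersolution $t \mapsto \sqrt{t}\Gamma_s$ on a time interval $[t_0, 1]$: provided $\supp\tilde\cM(t_0)$ lies strictly below $\sqrt{t_0}\Gamma_s$ throughout $\RR^{n+1}$ for some small $t_0 > 0$, the ordering propagates to $t = 1$, forcing $\tilde\bx$ strictly below $\Gamma_s$ and yielding the contradiction.

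The main obstacle is verifying the initial ordering inside the shrinking ball $B_{\sqrt{t_0}\rho_0}(\bOh)$. By a diagonal extraction I may arrange $t_j^{-1/2}M_{i_j} \to \cC$ so that $\tilde\cM(0) = \cH^n\lfloor \cC$. As $t_0 \searrow 0$ both $B_{\sqrt{t_0}\rho_0}$ and $\sqrt{t_0}\Gamma_s$ collapse to $\bOh$ while $\supp\tilde\cM(t_0)$ concentrates near $\cC$, so one must rule out any limiting mass lying above $\sqrt{t_0}\Gamma_s$ near the singular point. I expect to close this step by performing a further parabolic blow-down of $\tilde\cM$ at $(\bOh,0)$, producing a self-similar Brakke flow from $\cC$ with support in $F_t(W')$ for every $t > 0$; to such a limit the strict supersolution inequality $L_\Sigma f_{3R,\alpha} = -\alpha\mu_{3R}\phi_{3R} < 0$ on $\Sigma_{2R}$ (together with the strict constant-$h$ inequality of Lemma \ref{lemm:const-h} on $E_{2R}$) applies directly, and transferring back to $\tilde\cM$ at a small $t_0 > 0$ delivers the needed ordering.
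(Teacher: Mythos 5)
There is a genuine gap at exactly the point you flag as ``the main obstacle.'' Your compactness argument correctly reduces matters to showing that, for the limit flow $\tilde\cM$, the ordering below $\sqrt{t}\Gamma_s$ holds \emph{inside} $B_{\sqrt{t}\rho_0}(\bOh)$ at some initial time $t_0$ — Claims \ref{claim:blowup-outer-aprooxrs} and \ref{claim:boundary-true-1} only control the support outside that ball, and your contradiction point $\tilde\bx$ may well lie in $B_{\rho_0}$ where $W'(1)$ need not be below $\Gamma_s$. But the proposed resolution does not close this. A further parabolic blow-down of $\tilde\cM$ at $(\bOh,0)$ is not known to be self-similar: there is no forward monotonicity formula in this setting (the paper's introduction emphasizes that this is precisely the difficulty the barrier argument is designed to circumvent), so you cannot assume the blow-down is a self-expanding flow to which the supersolution inequalities ``apply directly.'' Even granting structure of the blow-down, transferring a subsequential blow-down statement back into the pointwise assertion that $\supp\tilde\cM(t_0)$ lies below $\sqrt{t_0}\Gamma_s$ throughout $B_{\sqrt{t_0}\rho_0}$ for a specific $t_0$ is not automatic. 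Worse, the statement you would need — that a flow emanating from $\cC$ with support in $F_t(W')$ lies below $\sqrt{t}\Gamma_s$ everywhere, including inside $B_{\sqrt{t}\rho_0}$ — is essentially the conclusion the whole barrier argument is building toward, so the step is close to circular.

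The paper avoids this entirely by exploiting that the claim is stated for the approximating flows $\cM_i$ rather than for a blow-up limit. First, the compactness/contradiction argument is run only for points in a neighborhood of $\partial B_{\sqrt{t}\rho}(\bOh)$, where Claim \ref{claim:boundary-true-1} applies directly to the limit and gives a uniform-in-$i$ boundary estimate for small $t$. Second, since each $M_i$ is a smooth hypersurface disjoint from $\bOh$, one has $\supp\cM_i(t)\cap B_{\sqrt{t}\rho}(\bOh)=\emptyset$ for all $t\leq 2t(i)$ — the shrinking ball is simply empty at the initial time, so the ``initial ordering'' you are struggling to verify holds vacuously. The maximum principle in $B_{\sqrt{t}\rho}(\bOh)$ from $t(i)$ to $T(s,\rho)$, with the boundary estimate as lateral data, then propagates the ordering into the interior for each $i$; the statement for $\cM$ follows afterwards by letting $i\to\infty$. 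If you restructure your argument along these lines — boundary estimate by compactness, interior by the parabolic maximum principle seeded by the emptiness of the ball at time $t(i)$ — the proof goes through without any forward blow-down.
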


\begin{proof}[Proof of Claim \ref{claim:boundary-true-2}]
We first show that there $T=T(s,\rho)>0$ such that the claim holds in a neighbourhood of $\partial B_{\sqrt{t}\rho}(\bOh)$. Assume this fails at times $0<t_j\to0$ for the flows $\cM_{i(j)}$. Note that $i(j) \to \infty$ as $j\to \infty$ since $M_i$ is disjoint from $M$. Let $\tilde\cM_j : = \cD_{t_j^{-1/2}}(\cM_{i(j)})$. Pass to a subsequence so that $\tilde\cM_j\rightharpoonup \tilde\cM$. By Claim \ref{claim:blowup-outer-aprooxrs}, $\supp\tilde \cM(t) \subset W'(t)$. However, by Claim \ref{claim:boundary-true-1}, $W'(t)\setminus B_{\sqrt{t}\rho_0}(\bOh)$ lies strictly below $\sqrt{t}\Gamma_s$ for $t >0$. Combined with upper semi-continuity of Gaussian density along Brakke flow convergence (implying that points in the support of a converging sequence of flows converge to points in support) we obtain a contradiction. 

Now, we note that $\supp \cM_i(t)$ is disjoint from $B_{\sqrt{t}\rho}$ for all $0 \leq t \leq 2t(i)$ (since $M_i$ is disjoint from $\bOh$). The claim then follows by applying the maximum principle in $B_{\sqrt{t}\rho}(\bOh)$ from $t(i)$ to $T(s,\rho)$.
\end{proof}

Letting $i \to \infty$ establishes the same statement as in Claim \ref{claim:boundary-true-2} for with $\cM_i$ replaced by $\cM$. Since the flow $t \mapsto\sqrt{t}\Gamma_s$ is scaling invariant this implies that any forwards blow-up of $\cM$ at $(\bOh,0)$ has to lie (weakly) below $\sqrt{t}\Gamma_s\cap B_{\sqrt{t}\rho}(\bOh)$ for all $t >0$. Letting $\rho \to \infty$ and $s\to 0$ establishes that any forward blow-up\footnote{i.e.\ any subsequential limit of $\cD_\lambda(\tilde\cM)$ as $\lambda\to\infty$} of $\tilde \cM$ is supported (weakly) below $t\mapsto \sqrt{t} \Sigma$, starting at $\cC$. Since $\Sigma$ is the outer expander, the support of $\tilde\cM$ thus must be a subset of $t\mapsto \sqrt{t} \Sigma$. Again, the constancy theorem implies that the multiplicity is a non-negative constant for a.e.~$t>0$, which additionally is monotone in time. But the monotonicity formula together with unit regularity implies that the multiplicity is one sufficiently far out, so any blow-up of $\cM$ agrees with $t\mapsto \sqrt{t} \Sigma$. 
\end{proof}

\section{Global barriers} \label{sec:glob-barrier}

Let $\cC$ denote a smooth cone over its link $S \subset \mathbb{S}^{n}$ and  $\Sigma$ a smooth, stable expander asymptotic to $\cC$. We choose a global unit normal vector field  on $\nu_\Sigma$.  As in Lemma \ref{dsvars} for the outermost expanders, we assume that $\Sigma$ admits a positive Jacobi field of the form $v = r \cdot \psi \circ \pi_\Gamma + w$, where $\psi$ is a positive function on $\Gamma$, together with the decay $|\nabla^\ell w| = O(r^{-1-\ell})$ for $l = 0,1$.

We mimic part of the construction in Section \ref{sec:expander-barriers}. Since $\Sigma$ is stable, for every $R>0$ large we have a positive first eigenfunction $\phi_{3R}$ in \eqref{eq:first-eignfct-L-fR} with eigenvalue $\mu_{3R}>0$.  For $\alpha>0$ to be fixed sufficiently small, we take
\begin{equation}
f_{3R,\alpha} = v + \alpha\phi_{3R}
\end{equation}
on $\Sigma_{3R} := \Sigma\cap B_{3R}(\bOh)$. Then, define 
\begin{equation}
h = \max_{\partial \Sigma_R} f_{3R,\alpha} > 0. 
\end{equation}
We now consider $M\subset \Rb{n+1}$ with an isolated singularity at $\bOh$ modelled on $\cC$. Assume that $\cM$ is a unit-regular cyclic Brakke flow, smooth on $(0,T)$ for some $T>0$ with $\cM(0) = \cH^n\lfloor M$ and such that $t^{-1/2} \cM(t)$ converges smoothly to $\Sigma$ (on compact subsets of $\Rb{n+1}$) as $t\searrow 0$. Thus for every $\delta>0$ and every $R<\infty$ there is $T_{\delta,R}$ and a smooth family of functions $u:\Sigma_{3R} \times (0, T_{\delta,R})$ such that for all $t\in (0, T_{\delta,R})$
\begin{equation} \label{eq:graphical}
  \{ \bx + u(\bx, t) \nu_\Sigma(\bx)\, | \, \bx \in \Sigma_{3R}\} \subset t^{-1/2} \cM(t)
\end{equation}
with $\|u(\cdot, t)\|_{C^3(\Sigma_{3R})} \leq \frac \delta 2$. For $s\in (0,1)$ we define the \emph{close barriers}
$$ \Gamma^\pm_{\textnormal{close},s}(t) = \sqrt{t}\cdot \{  (\bx + (u(\bx, t)\pm s f_{3R,\alpha})\cdot  \nu_\Sigma(\bx))\, | \, \bx \in \Sigma_{2R}\}$$
and the \emph{far barriers}
$$ \Gamma^\pm_{\textnormal{far},s}(t) = \{  (\bx \pm s h \sqrt{t} \cdot \nu_{\cM(t)}(\bx))\, | \, \bx \in \cM(t)\setminus B_{\sqrt{t} R}(\bOh)\}\, ,$$
where we choose the unit normal vectorfield $\nu_{\cM(t)}$ such that it induces the same orientations as $\nu_\Sigma$ in the convergence $t^{-1/2} \cM(t) \to \Sigma$ as $t\searrow 0$. 

We aim to check that for $s>0$ sufficiently small and $\alpha,R$ chosen appropriately 
$t\mapsto \Gamma^+_{\textnormal{close/far}, s}(t)$  constitute supersolutions to mean curvature flow (in the sense that a mean curvature flow cannot touch $t\mapsto \Gamma^+_{\textnormal{close/far}, s}(t)$ \emph{from below} relative to its unit normal as fixed above) away from their respective boundaries. Similarly, $t\mapsto \Gamma^-_{\textnormal{close/far}, s}(t)$  constitute  subsolutions to mean curvature flow (in the sense that a mean curvature flow cannot touch $t\mapsto \Gamma^-_{\textnormal{close/far}, s}(t)$ \emph{from above} relative to its unit normal as fixed above) away from their respective boundaries.

To construct global barriers, we start by checking that $\Gamma^\pm_{\textnormal{close},s}$ and $\Gamma^\pm_{\textnormal{far},s}$ have good intersection. This will be used to ``weld'' them together in the sense of Meeks--Yau \cite{MeeksYau} to form a global barrier. 

\begin{lemma}\label{lemm:subsolns-cross-v2}
There is $R_0=R_0(\Sigma)$  so that for $R\geq R_0$ there is $\alpha_0 = \alpha_0(R,\Sigma)>0$ small and  $\delta_0 =\delta_0(R,\Sigma, \alpha_0)>0$ and so that if $\alpha \in (0,\alpha_0)$ and $\delta \in (0,\delta_0)$ then for $t\in (0, T_{\delta, R})$ we have that
\begin{itemize}
 \item[(i)] $\Gamma^+_{\textnormal{close},s}(t)$ lies above $ \Gamma^+_{\textnormal{far},s}(t)$ in a neighborhood of $\partial\Gamma^+_{\textnormal{close},s}(t)$.
  \item[(ii)] $\Gamma^+_{\textnormal{far},s}(t)$ lies above $ \Gamma^+_{\textnormal{close},s}(t)$ in a neighborhood of $\partial\Gamma^+_{\textnormal{far},s}(t)$.
  \item[(i)] $\Gamma^-_{\textnormal{close},s}(t)$ lies below $ \Gamma^-_{\textnormal{far},s}(t)$ in a neighborhood of $\partial\Gamma^-_{\textnormal{close},s}(t)$.
  \item[(ii)] $\Gamma^-_{\textnormal{far},s}(t)$ lies above $ \Gamma^-_{\textnormal{close},s}(t)$ in a neighborhood of $\partial\Gamma^-_{\textnormal{far},s}(t)$.
\end{itemize}
for all $s\in (0,1)$.
\end{lemma}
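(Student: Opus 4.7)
The plan is to translate each of (i)--(iv) into scalar inequalities between graphical heights over $\Sigma$ after rescaling by $t^{-1/2}$, and then read off the comparisons from Lemma \ref{lemm:subsolns-cross}. Under the parabolic dilation $\bx \mapsto t^{-1/2}\bx$, $\Gamma^\pm_{\textnormal{close},s}(t)$ becomes, by construction, the normal graph of $u(\cdot,t)\pm s f_{3R,\alpha}$ over $\Sigma_{2R}$, while $\Gamma^\pm_{\textnormal{far},s}(t)$ becomes the normal graph of $\pm s h$ over $t^{-1/2}\cM(t)\setminus B_R(\bOh)$. Since $t^{-1/2}\cM(t)$ is itself the normal graph of $u(\cdot,t)$ over $\Sigma$ with $\|u(\cdot,t)\|_{C^3(\Sigma_{3R})}\le \delta/2$, a standard computation in Fermi coordinates about $\Sigma$ rewrites the rescaled far barrier, on the compact annular region $\Sigma_{2R}\setminus B_R(\bOh)$, as the normal graph over $\Sigma$ of a function $u(\cdot,t)\pm s h + r_\pm(\bx,t)$ with reparametrization error $|r_\pm|\le C(R,\Sigma)\, s\delta$. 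This is analogous to the graphical expansions collected in Appendix \ref{sec:graphs} and uses only the $C^1$-smallness of $u$ together with uniform bounds on $|A_\Sigma|$ on the compact annulus.

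With all four barriers written as normal graphs over $\Sigma$, items (i) and (iii) reduce to the assertion that $\pm s(f_{3R,\alpha}-h)$ dominates $|r_\pm|$ on a neighborhood of $\partial\Sigma_{2R}$. The proof of Lemma \ref{lemm:subsolns-cross} in fact delivers the strict gap $f_{3R,\alpha}-h\ge 1$ on $\partial\Sigma_{2R}$ for $R\ge R_0$ and $\alpha\in (0,\alpha_0)$, and by continuity this persists as $f_{3R,\alpha}-h\ge 1/2$ on a small one-sided tubular neighborhood of $\partial\Sigma_{2R}$ inside $\Sigma_{2R}$. Choosing $\delta_0=\delta_0(R,\Sigma,\alpha_0)$ with $C(R,\Sigma)\delta_0<1/2$ then absorbs the error uniformly in $s\in(0,1)$ and yields (i) and (iii) with strict inequality. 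For (ii) and (iv), which concern a neighborhood of $\partial\Sigma_R$, the defining relation $h=\max_{\partial\Sigma_R} f_{3R,\alpha}$ gives only the non-strict $h\ge f_{3R,\alpha}$; combined with $C^1$-continuity this extends to $h-f_{3R,\alpha}\ge -C_1\eta$ on an $\eta$-tubular neighborhood of $\partial\Sigma_R$, and the $O(s\delta)$ reparametrization error is then absorbed by shrinking $\eta$ and $\delta$, producing the non-strict ``lies above/below'' relation that is all one needs for the forthcoming Meeks--Yau-style welding.

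I expect the only real obstacle to be the first step: correctly bookkeeping the conversion of $\Gamma^\pm_{\textnormal{far},s}(t)$ from a normal graph over $\cM(t)$ into a normal graph over $\Sigma$. The key identity is that $\nu_{t^{-1/2}\cM(t)}(\bz+u(\bz)\nu_\Sigma(\bz))-\nu_\Sigma(\bz)=O(|\nabla u|)$ in a tangent frame, so the offset $sh\,\nu_{t^{-1/2}\cM(t)}$ applied to the graph of $u$ differs, modulo a tangential reparametrization, from the pure normal offset $(u+sh)\,\nu_\Sigma$ by a quantity of size $O(s|\nabla u|)$ in the normal direction, leading to $|r_\pm|\le C(R,\Sigma)\, s\delta$ with $C$ independent of $s,\delta,t$. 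Note that the ordering of the quantifiers in the statement is consistent with this scheme: $R_0$ is fixed first, then $\alpha_0$ (from Lemma \ref{lemm:subsolns-cross}), and finally $\delta_0$ is chosen small enough, depending on $R_0,\Sigma,\alpha_0$, to defeat the error $|r_\pm|$ uniformly in $s\in(0,1)$.
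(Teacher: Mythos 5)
Your overall strategy is the same as the paper's: the proof there is literally ``this follows from Lemma \ref{lemm:subsolns-cross} by choosing $\delta_0$ sufficiently small,'' and your write-up correctly identifies what that one line is hiding — after parabolic rescaling one must rewrite $\Gamma^\pm_{\textnormal{far},s}(t)$ as a normal graph over $\Sigma$ on the annulus $\Sigma_{2R}\setminus B_R(\bOh)$, picking up a reparametrization error of size $O(s\delta)$ from the $C^3$-closeness \eqref{eq:graphical} of $t^{-1/2}\cM(t)$ to $\Sigma$, and then one compares heights using the two inequalities of Lemma \ref{lemm:subsolns-cross}. Your treatment of (i) and (iii) is correct: the proof of Lemma \ref{lemm:subsolns-cross} does have room to produce a definite gap $f_{3R,\alpha}-h\geq 1$ on $\partial\Sigma_{2R}$, this persists on a neighborhood by continuity, and $s\cdot\tfrac12$ beats $Cs\delta$ once $\delta_0$ is small.

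However, your argument for (ii) and (iv) has a genuine gap. Near $\partial\Sigma_R$ the only available inequality is $h-f_{3R,\alpha}\geq 0$, with \emph{equality} at any maximizer $p$ of $f_{3R,\alpha}$ on $\partial\Sigma_R$ (and the difference is even slightly negative just outside $\partial\Sigma_R$, since $v\sim r$ increases outward). At such a point the signed height difference between the far and close barriers is $s\bigl(h-f_{3R,\alpha}(p)\bigr)+r_\pm(p)=r_\pm(p)$, and $r_\pm$ has no sign; no choice of $\delta$ or of the width $\eta$ of the tubular neighborhood can ``absorb'' an error into a quantity that vanishes. This matters: if $\Gamma^+_{\textnormal{far},s}$ dips below $\Gamma^+_{\textnormal{close},s}$ anywhere along $\partial\Gamma^+_{\textnormal{far},s}$, the Meeks--Yau welding in Proposition \ref{prop:uniqueness-barrier} exposes an edge of the far piece below the close piece, and the barrier property of the welded hypersurface is no longer justified. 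The fix is cheap and should be made explicit: replace the definition of $h$ by $h:=\max_{\partial\Sigma_R}f_{3R,\alpha}+1$ (say). The chain of inequalities in the proof of Lemma \ref{lemm:subsolns-cross} still yields $h\leq f_{3R,\alpha}-1$ on $\partial\Sigma_{2R}$ after taking $R$ large enough that $-R+O(R^{-1})\leq -3$, while now $h-f_{3R,\alpha}\geq \tfrac12$ on a neighborhood of $\partial\Sigma_R$, so the $O(s\delta)$ error is dominated on both boundary collars; Lemma \ref{lemm:const-h} is unaffected since $h$ remains a constant of size $O(R)$ multiplied by $s$. With that modification your argument closes.
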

\begin{proof}
This follows from Lemma \ref{lemm:subsolns-cross} by choosing $\delta_0>0$ sufficiently small.
\end{proof}

\begin{lemma}\label{lemm:inner-barrier}
For $R\geq R_0(\Sigma)$, $\alpha \in (0,\alpha_0)$, there is $\delta_1=\delta_1(R,\alpha,\Sigma)>0$ and $s_0=s_0(R,\Sigma)>0$, such that for all $s\in (0,s_0)$ and $\delta \in (0,\delta_1)$, for $t \in (0,T_{\delta,R})$,
\[
t\mapsto \Gamma_{\textnormal{close},s}^+(t)
\]
is a supersolution to mean curvature flow (and similarly  $t\mapsto \Gamma_{\textnormal{close},s}^-(t)$ is a subsolution to mean curvature flow).
\end{lemma}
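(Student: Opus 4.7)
The plan is to pass to rescaled mean curvature flow (RMCF), in which $\Sigma$ is stationary, and then reduce the claim to a pointwise inequality on the graphical height function, in direct analogy with Lemmas \ref{lemm:const-h} and \ref{lemm:subsoln-falphafR}. Setting $\tau = \log t$, the rescaled flow $\tilde \cM(\tau) := e^{-\tau/2} \cM(e^\tau) = t^{-1/2}\cM(t)$ is a smooth RMCF that, near $\Sigma_{3R}$, is the graph of $\tilde u(\bx,\tau) := u(\bx, e^\tau)$ over $\Sigma$. The close barriers rescale to $\tau \mapsto \Graph_{\Sigma_{2R}}\bigl(\tilde u(\cdot,\tau) \pm s f_{3R,\alpha}\bigr)$; since being a sub/supersolution to MCF is equivalent after rescaling to being a sub/supersolution to RMCF, it suffices to establish the corresponding one-sided property for these rescaled graphs.

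By Proposition \ref{prop:expand-rescaled-mcf-app}, for any smooth (possibly $\tau$-dependent) function $w$ on $\Sigma_{3R}$ with $\|w\|_{C^2}$ small,
\[
v_{\Graph_\Sigma w}\,\bigl(\partial_\tau \bx_{\Graph_\Sigma w} - \bH + \tfrac 12 \bx_{\Graph_\Sigma w}\bigr) \cdot \nu_{\Graph_\Sigma w} = \partial_\tau w - L_\Sigma w + E(w),
\]
with $v_{\Graph_\Sigma w} > 0$ a Jacobian factor and $E(w)$ quadratic in $(w,\nabla w, \nabla^2 w)$; the supersolution condition is exactly nonnegativity of the right-hand side. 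Taking $w^+ := \tilde u + s f_{3R,\alpha}$, using that $\tilde u$ itself satisfies the RMCF graph equation $\partial_\tau \tilde u - L_\Sigma \tilde u + E(\tilde u) = 0$ and that $L_\Sigma f_{3R,\alpha} = -\alpha\mu_{3R}\phi_{3R}$ (by \eqref{eq:first-eignfct-L-fR} together with $L_\Sigma v = 0$), the right-hand side collapses to
\[
\partial_\tau w^+ - L_\Sigma w^+ + E(w^+) = s\alpha\mu_{3R}\phi_{3R} + \bigl[E(w^+) - E(\tilde u)\bigr].
\]

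Since $\phi_{3R} > 0$ on the compact set $\overline{\Sigma_{2R}} \subsetneq \Sigma_{3R}$, the leading term is bounded below by $c(R,\alpha,\Sigma)\,s$ for some $c > 0$. Expanding $E$ around $\tilde u$ as a quadratic form and using $\|\tilde u(\cdot,\tau)\|_{C^2} \leq \delta/2$ together with the $R,\alpha,\Sigma$-dependent bound on $\|f_{3R,\alpha}\|_{C^2}$ yields $|E(w^+) - E(\tilde u)| \leq C(R,\alpha,\Sigma)(\delta s + s^2)$. Choosing $\delta_1, s_0$ small enough (in terms of $c$ and $C$) absorbs the error and gives the supersolution property for $\Gamma^+_{\textnormal{close},s}$. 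The $\Gamma^-_{\textnormal{close},s}$ case is identical, except that the leading term becomes $-s\alpha\mu_{3R}\phi_{3R}$, giving the subsolution inequality. The only real subtlety is securing the quadratic error bound uniformly in $\tau$, but this is precisely the setting Proposition \ref{prop:expand-rescaled-mcf-app} is designed to handle, so no analytic input beyond what Section \ref{sec:expander-barriers} already uses is required.
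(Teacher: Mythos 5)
Your proof is correct and follows essentially the same route as the paper: rescale to the forward rescaled flow, write the barrier as the graph of $\tilde u \pm s f_{3R,\alpha}$ over $\Sigma$, use $L_\Sigma f_{3R,\alpha} = -\alpha\mu_{3R}\phi_{3R}$ together with positivity of $\phi_{3R}$ on $\overline{\Sigma_{2R}}$, and absorb the quadratic errors by taking $\delta$ and $s$ small. The only difference is bookkeeping: where you difference Proposition \ref{prop:expand-rescaled-mcf-app} twice and assert $|E(w^+)-E(\tilde u)|\leq C(\delta s+s^2)$ (which needs Lipschitz dependence of the error coefficients on the graph function, not just their size), the paper instead invokes the relative linearization Lemma \ref{lemm:relative-expander-mean-curvature}, which packages exactly this difference with the estimate $|E^{sf_{3R,\alpha}}|\leq Cs\delta(|f_{3R,\alpha}|+|\nabla_\Sigma f_{3R,\alpha}|+|\nabla^2_\Sigma f_{3R,\alpha}|)$.
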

\begin{proof}
By rescaling it is equivalent to show that 
$$\tau \mapsto \tilde \Gamma(\tau):= e^{-\tau/2} \Gamma_{\textnormal{close},s}^+(\ln(\tau))$$ 
for $\tau \in (-\infty, \exp(T_{\delta,r}))$ is a supersolution to rescaled mean curvature flow. Similarly we denote the corresponding  solution of $\cM$ to rescaled mean curavture flow by
$$\tau \mapsto \tilde\cM(\tau):= e^{-\tau/2}\cM (\ln(\tau))\, ,$$
which by \eqref{eq:graphical} (writing $\tilde u(\bx, \tau) := u(\bx, \ln{\tau})$) satisfies
 $$\{ \bx + \tilde u(\bx, \tau) \nu_\Sigma(\bx)\, | \, \bx \in \Sigma_{3R}\} \subset  \tilde \cM(\tau)\, .$$
Applying Lemma \ref{lemm:relative-expander-mean-curvature} we compute
\begin{equation}\label{eq:linearisation_1}
 v_{\tilde \Gamma} \left( \partial_\tau \bx_{\tilde \Gamma} - \bH_{\tilde \Gamma} + \frac 12 \bx_{\tilde \Gamma}\right)\cdot \nu_{\tilde \Gamma}   = - sL_\Sigma f_{3R,\alpha}  + E^{sf_{3R,\alpha}} = \alpha s \mu_{3R} \phi_{3R}+E^{sf_{3R,\alpha}}
\end{equation}
and provided $\|\tilde u\|_{C^3(\Sigma_{2R})} + s \|f_{3R,\alpha}\|_{C^3(\Sigma_{2R})} \leq \delta$ we can estimate pointwise
$$|E^{sf_{3R,\alpha}}(\bx,\tau)| \leq Cs \delta (|f_{3R,\alpha}(\bx)| + |\nabla_\Sigma f_{3R,\alpha}(\bx)|+ |\nabla^2_\Sigma f_{3R,\alpha}(\bx)|)\, .$$
Note that $\phi_{3R} >0$ on $\Sigma_{3R}$, so there exists $C>0$ such that for $\bx \in \Sigma_{2R}$
$$ |f_{3R,\alpha}(\bx)| + |\nabla_\Sigma f_{3R,\alpha}(\bx)|+ |\nabla^2_\Sigma f_{3R,\alpha}(\bx)| \leq C \phi_{3R}(\bx)\, .$$
Thus \eqref{eq:linearisation_1} yields (for all $s\in (0,s_0)$)
$$ v_{\tilde \Gamma} \left( \partial_\tau \bx_{\tilde \Gamma} - \bH_{\tilde \Gamma} + \frac 12 \bx_{\tilde \Gamma}\right)\cdot \nu_{\tilde \Gamma}   \geq s(\alpha \mu_{3R} - C\delta) \phi_{3R} > 0\, ,$$
on $\Sigma_{2R}$, as long as $\delta$ is sufficiently small. This yields the statement for $t\mapsto  \Gamma_{\textnormal{close},s}^+(t)$. The statement for $t\mapsto  \Gamma_{\textnormal{close},s}^-(t)$ follows analogously.
\end{proof}

Take $R_0$ as in Lemma \ref{lemm:subsolns-cross-v2}. 
\begin{lemma}\label{lemm:outer-barrier}
There is $R=R_1(M,\Sigma)\geq R_0$ sufficiently large so that for $R\geq R_1$, taking $\alpha_0$ as in Lemma \ref{lemm:subsolns-cross-v2}, $s_0$ as in Lemma \ref{lemm:inner-barrier} and $T_\textnormal{far}>0$ sufficiently small depending on $M,\Sigma,R$ the following holds: for $t\in (0, T_\textnormal{far})$ the flow $t\mapsto \Gamma_{\textnormal{far},s}^+(t)$ is a supersolution to mean curvature flow, away from its boundary (and similarly  $t\mapsto \Gamma_{\textnormal{far},s}^-(t)$ is a subsolution to mean curvature flow, away from its boundary).
\end{lemma}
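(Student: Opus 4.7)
The plan is to adapt the computation of Lemma \ref{lemm:const-h} to an \emph{evolving} base, namely $\cM(t)$ in place of the static expander $\Sigma$. As a first step, I rescale: set $\tilde\cM(\tau):=e^{-\tau/2}\cM(e^\tau)$, so that $\tilde\cM(\tau)$ is a smooth solution to rescaled mean curvature flow converging smoothly (on compact subsets of $\Sigma_{3R}$ via \eqref{eq:graphical}) to $\Sigma$ as $\tau\to-\infty$. Under this rescaling the barriers transform into the constant normal offsets
\[
\tilde\Gamma^\pm_{\textnormal{far},s}(\tau):=\{\bx \pm sh\,\nu_{\tilde\cM(\tau)}(\bx) : \bx \in \tilde\cM(\tau)\setminus B_R(\bOh)\},
\]
and the supersolution/subsolution property is preserved, so it suffices to show that $\tau\mapsto\tilde\Gamma^+_{\textnormal{far},s}(\tau)$ (resp.\ $\tilde\Gamma^-_{\textnormal{far},s}(\tau)$) is a super- (resp.\ sub-) solution to rescaled MCF away from its boundary, for $\tau<\log T_\textnormal{far}$.

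Next, the key calculation. Viewing $\tilde\Gamma^\pm_{\textnormal{far},s}(\tau)$ as the graph of the constant function $u\equiv\pm sh$ over the rescaled-MCF base $\tilde\cM(\tau)$, the graphical formula (Proposition \ref{prop:expand-rescaled-mcf-app}, cf.\ the proof of Lemma \ref{lemm:inner-barrier}) yields
\[
v_{\tilde\Gamma}\Bigl(\partial_\tau \bx_{\tilde\Gamma} - \bH_{\tilde\Gamma} + \tfrac12 \bx_{\tilde\Gamma}\Bigr)\cdot\nu_{\tilde\Gamma} = -L_{\tilde\cM(\tau)}(\pm sh) + E^{\pm sh} = \pm sh\bigl(\tfrac12 - |A_{\tilde\cM(\tau)}|^2\bigr) + E^{\pm sh},
\]
since $L_{\tilde\cM(\tau)}$ acts on constants by multiplication by $|A_{\tilde\cM(\tau)}|^2-1/2$. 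Crucially, because the base $\tilde\cM(\tau)$ itself exactly satisfies rescaled MCF, the contributions from its own evolution cancel, leaving a remainder that is genuinely quadratic in the offset: $|E^{\pm sh}|\leq C(sh)^2$ uniformly, provided $|A_{\tilde\cM(\tau)}|$ is uniformly bounded on $\tilde\cM(\tau)\setminus B_R(\bOh)$.

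The remaining and principal analytic task is to secure the uniform curvature bound $|A_{\tilde\cM(\tau)}|^2\leq 1/4$ on the (potentially unbounded) set $\tilde\cM(\tau)\setminus B_R(\bOh)$ for all $\tau<\log T_\textnormal{far}$. I split this set in two. On the compact annulus $\tilde\cM(\tau)\cap(B_{3R}\setminus B_R)$, the graphical assumption \eqref{eq:graphical} with $\|u(\cdot,t)\|_{C^3(\Sigma_{3R})}\leq \delta/2$ means $\tilde\cM(\tau)$ is $C^3$-close to $\Sigma$ here; combined with $|A_\Sigma|^2\leq 1/8$ on $\Sigma\setminus B_R$ (from Proposition \ref{prop:DS-decay-expanders}, taking $R=R_1(\Sigma)$ large enough) this gives $|A_{\tilde\cM(\tau)}|^2\leq 1/4$ provided $\delta$ is small. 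On the exterior region $\tilde\cM(\tau)\setminus B_{3R}$, undo the rescaling: this region corresponds to $\cM(t)\setminus B_{3R\sqrt t}(\bOh)$ in the original picture. Since $M$ is smooth away from $\bOh$, pseudolocality for mean curvature flow (or, more elementarily, local smooth short-time existence/uniqueness of MCF started in small balls of $M\setminus\{\bOh\}$ together with the avoidance principle applied to $\cM$) gives a uniform curvature bound $|A_{\cM(t)}|\leq C_M$ on $\cM(t)\setminus B_{3R\sqrt t}(\bOh)$ for all $t\in(0,T_\textnormal{far})$. The parabolic scaling of curvature yields $|A_{\tilde\cM(\tau)}|^2 = t\,|A_{\cM(t)}|^2 \leq tC_M^2$, which is $\leq 1/4$ once $T_\textnormal{far}=T_\textnormal{far}(M,\Sigma,R)$ is taken small.

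Combining the two estimates, the violation is bounded below by $sh/4 - C(sh)^2>0$ for $s\in(0,s_0)$ small, proving the supersolution property for $\tilde\Gamma^+_{\textnormal{far},s}$; the subsolution property for $\tilde\Gamma^-_{\textnormal{far},s}$ follows by the symmetric sign argument. The main obstacle is the curvature control on the unbounded outer region, where compact-set smooth convergence is unavailable; the correct viewpoint is that the relevant set in the original flow is $\cM(t)\setminus B_{3R\sqrt t}(\bOh)$, on which $\cM(t)$ behaves like short-time smooth MCF from the smooth hypersurface $M\setminus\{\bOh\}$, and the $O(1)$ curvature bound there is converted to an $O(\sqrt t)$ bound in the rescaled picture by parabolic scaling.
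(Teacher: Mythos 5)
Your overall strategy --- passing to the forward rescaled flow, viewing $\Gamma^\pm_{\textnormal{far},s}$ as the constant graph $\pm sh$ over the rescaled base, and reducing to the inequality $sh(\tfrac12 - |A_{\tilde\cM(\tau)}|^2) - C(sh)^2 > 0$ --- is a legitimate (and essentially equivalent) reformulation of the paper's unrescaled computation, where the term $\partial_t(s\sqrt{t}) = \tfrac{1}{2t}\,sh\sqrt{t}$ plays the role of your $\tfrac12 sh$. The annulus $B_{3R}\setminus B_R$ is also handled correctly via the graphical convergence to $\Sigma$. However, there is a genuine error in the key curvature estimate on the exterior region, which is precisely the part you identify as the ``principal analytic task.''

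The claim that pseudolocality gives a \emph{uniform} bound $|A_{\cM(t)}| \leq C_M$ on $\cM(t)\setminus B_{3R\sqrt{t}}(\bOh)$ for all $t \in (0,T_\textnormal{far})$ is false. That region has inner boundary at radius $3R\sqrt{t}$, which shrinks to the conical singularity as $t\searrow 0$; at a point $\by$ with $|\by| \approx 3R\sqrt{t}$ the surface is modeled on the cone at scale $|\by|$, so $|A_{\cM(t)}(\by)| \sim |\by|^{-1} \sim (R\sqrt{t})^{-1} \to \infty$. What pseudolocality (applied, as in the paper, in the ball of radius $R^{-1}|\bx|$ around each $\bx \in M\setminus\{\bOh\}$, for times $t \leq 2R^{-2}|\bx|^2$) actually yields is the \emph{scale-invariant} bound $|\by|\,|A_{\cM(t)}(\by)| \leq C$, not an absolute one. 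Consequently your final step $|A_{\tilde\cM(\tau)}|^2 = t|A_{\cM(t)}|^2 \leq tC_M^2 \leq 1/4$ --- and the closing remark that the curvature is ``$O(\sqrt{t})$ in the rescaled picture'' --- is unjustified. The argument is salvageable, because the scale-invariant bound gives exactly what is needed: for $|\tilde\by| \geq 3R$ one gets $|A_{\tilde\cM(\tau)}(\tilde\by)| \leq C/|\tilde\by| \leq C/(3R)$, which is $\leq 1/2$ once $R$ is chosen large (this is one reason the lemma requires $R \geq R_1(M,\Sigma)$ large, a dependence your argument never actually uses). So the curvature in the rescaled picture is small because $R$ is large, not because $t$ is small. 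With that correction your proof becomes a rescaled version of the paper's, which instead avoids the global rescaling altogether by proving a local graphical barrier claim over balls $B_{R^{-1}|\bx|}(\bx)$ and welding the conclusion over a covering of $M\setminus\{\bOh\}$; you should also note that the quadratic error constant requires bounds on $|\nabla A|$ and $|\nabla^2 A|$ of the base as well, which follow from the same scale-invariant pseudolocality-plus-parabolic-estimates argument.
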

\begin{proof} 
We first establish the following claim. Consider the $n$-dimensional ball in the $\{x_{n+1} =0\}$ hyperplane, given by $B^n_r(0):= B_r(0) \cap \{x_{n+1} =0\}$.
\begin{claim}\label{claim:pseudo-barrier} There exists $\eta=\eta(n)>0$ with the following property.
 Let $u: B^n_1(0) \times [0,1]$ with $\|u(\cdot, t)\|_{C^3(B^n_1(\bOh))} \leq \eta$ such that for $t\in [0,1]$
 $$ \cM(t) := \{(\hat \bx, u(\hat \bx,t))\, |\, \hat \bx \in B^n_1(0)\}$$
 constitutes a smooth mean curvature flow. Let $\nu_\cM(\cdot, t)$ be the upwards unit normal to $\cM(t)$. Consider
 $$\Gamma^\pm(t):= \{\bx \pm s \sqrt{t} \nu_\cM(\bx, t)\, |\, \bx \in \cM(t)\}\, .$$
 Then for $s\in (0,1)$, the flow $(0,1] \ni t\mapsto \Gamma^+(t)$ is a supersolution to mean curvature flow. (Similarly, the flow $(0,1] \ni  t\mapsto \Gamma^-(t)$ is a subsolution to mean curvature flow.)
\end{claim}
\begin{proof}[Proof of Claim]
 We consider $\Gamma^+(t)$ (the computation for $\Gamma^-(t)$ is analogous). Let $t\mapsto \bx_\cM(t)$ be a point evolving normally, i.e.\ $\partial_t\bx_\cM = \bH_\cM(\bx_\cM)$. Recall that 
 \[
\left|\partial_t \nu_\cM(\bx_\cM(t),t)\right| = |\nabla_\cM H_\cM| \leq C \eta. 
 \]
 Thus if we set $\bx_{\Gamma^+} = \bx_\cM + s\sqrt{t}\nu_\cM$ then
 \[
 \partial_t\bx_{\Gamma^+} = \bH_{\cM} + (\partial_t(s\sqrt{t}))\nu_\cM + s\sqrt{t} (\partial_t \nu_\cM)
 \]
 so this point is evolving with normal speed
 \[
 v_{\Gamma^+} (\partial_t\bx_{\Gamma^+}) \cdot \nu_{\Gamma^+} = H_\cM + \partial_t(s\sqrt{t}) + s\sqrt{t} (\partial_t \nu_\cM) \cdot \nu_{\Gamma^+} 
 \]
 where $v_{\Gamma^+} = (\nu_{\Gamma^+}(\bx_{\Gamma^+}) \cdot \nu_\cM(\bx_\cM))^{-1}$ (cf. Appendix \ref{app:graphical-eqns-defn-v}). Thus
 \begin{align*}
 v_{\Gamma^+} (\partial_t \bx_{\Gamma^+} - \bH_{\Gamma^+} )  \cdot \nu_{\Gamma^+} & = H_\cM - v_{\Gamma^+} H_{\Gamma^+} + \partial_t(s\sqrt{t}) + s\sqrt{t} (\partial_t \nu_\cM) \cdot \nu_{\Gamma^+} \\
 & = \partial_t(s\sqrt{t}) - |A_\cM|^2 s\sqrt{t} + E\, ,
 \end{align*}
where the error satisfies $ |E| \leq  C s\eta \sqrt{t}$, where $C$ depends only on $n$ (using Lemma \ref{lemm:expand-H-app} to estimate the difference in mean curvatures and the computation above to estimate the unit normal evolution term). Thus, for $\eta> 0$ sufficiently small and $t\in (0,1]$
 \[
  v_{\Gamma^+} \left( \partial_\tau \bx_{\Gamma^+} - \bH_{\Gamma^+}\right)\cdot \nu_{\Gamma^+}  \geq (\tfrac{1}{2t} - C\eta)  s\sqrt{t} \geq (\tfrac{1}{2} - C\eta)  s\sqrt{t} > 0\, .
  \]
 This establishes the claim.
 \end{proof}
There is $C_M>0$ so that $|\by| |A_M(\by)|+ |\by|^2 |\nabla A_M(\by)| \leq C_M $ (this follows because $M$ is modeled on a smooth cone at $\bOh$). This implies that after choosing $R$ sufficiently large, we have for all $\bx \in M\setminus \{\bOh\}$ that
\[
(R|\bx|^{-1}(M - \bx))\cap B_2(\bOh)
\]
can be written as a $C^3$-graph over its tangent plane with $C^3$-norm bounded by $O(R^{-1})$. By pseudolocality (cf.\ \cite[Theorem 1.5]{INS} or \cite[Theorem 7.3]{CY:pseudo}) this implies that (taking $R$ sufficiently large)
\[
(R|\bx|^{-1}(\cM(R^{-2}|\bx|^2t) - \bx)) \cap B_1(\bOh)
\]
remains a small Lipschitz graph over its tangent plane for $t \in [0,2]$. Parabolic estimates then imply that if we take $R$ even larger, the graph will have $C^3$-norm bounded by $\eta$ (as defined in Claim \ref{claim:pseudo-barrier}). 

We can thus apply the scaled version of Claim \ref{claim:pseudo-barrier} over balls at $\bx \in M$ of radius $R^{-1}|\bx|$. The claim only applies for times $t \in [0,R^{-2}|\bx|^2]$, but for larger times, the graph in $\cM(t)$ over this ball will be contained in $B_{\sqrt{t}R}$, and thus does not contribute to $\Gamma^+_{\textrm{far},s}(t)$. This completes the proof.
\end{proof}
We now choose $R\geq R_1$ to satisfy Lemma \ref{lemm:outer-barrier}. We then choose $\alpha \in (0,\alpha_0)$ as in Lemma \ref{lemm:subsolns-cross-v2} and then $s_0$ as in Lemma \ref{lemm:inner-barrier}. 

\begin{prop}\label{prop:uniqueness-barrier}  There is $\delta>0$ sufficiently small such that for all $s\in (0,s_0)$ and $t\in (0, T_{\delta, R})$,  we can weld $t\mapsto \Gamma_{\textnormal{close},s}^+(t)$ to $t\mapsto \Gamma_{\textnormal{far},s}^+(t)$ to obtain a global supersolution $t\mapsto \Gamma_{s}^+(t)$ to mean curvature flow. Similarly, we can weld $t\mapsto \Gamma_{\textnormal{close},s}^-(t)$ to $t\mapsto \Gamma_{\textnormal{far},s}^-(t)$ to obtain a global subsolution $t\mapsto \Gamma_{s}^-(t)$ to mean curvature flow.
\end{prop}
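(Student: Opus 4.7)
The plan is to produce $\Gamma^+_s(t)$ as the (smoothed) pointwise minimum of $\Gamma^+_{\textnormal{close},s}(t)$ and $\Gamma^+_{\textnormal{far},s}(t)$ on their overlap annular region, handling the resulting Lipschitz seam via the Meeks--Yau welding procedure. The subsolution $\Gamma^-_s(t)$ is constructed analogously using the pointwise maximum, so I focus on $\Gamma^+_s(t)$.

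First, I would choose $\delta>0$ small enough that $\delta\leq \min(\delta_0,\delta_1)$ so that Lemmas \ref{lemm:subsolns-cross-v2} and \ref{lemm:inner-barrier} both apply. Since $T_{\delta,R}\searrow 0$ as $\delta\searrow 0$ (a direct consequence of the smooth convergence $t^{-1/2}\cM(t)\to\Sigma$ on compacts as $t\searrow 0$), I may further shrink $\delta$ to arrange $T_{\delta,R}\leq T_\textnormal{far}$, so that Lemma \ref{lemm:outer-barrier} also applies on $(0,T_{\delta,R})$. For such $t$, the smooth convergence together with the results in Appendix \ref{sec:graphs} lets me express both $\Gamma^+_{\textnormal{close},s}(t)$ and $\Gamma^+_{\textnormal{far},s}(t)$ as normal graphs over $\cM(t)$ on the overlap annular region (comparable to $\{R\sqrt{t}<|\bx|<2R\sqrt{t}\}$), with graphical heights $\eta_\textnormal{close}$ and $\eta_\textnormal{far}$ respectively.

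By parts (i)--(ii) of Lemma \ref{lemm:subsolns-cross-v2}, $\eta_\textnormal{close}-\eta_\textnormal{far}$ is strictly negative near the inner boundary of the overlap and strictly positive near the outer boundary, so the seam $\{\eta_\textnormal{close}=\eta_\textnormal{far}\}$ is compactly contained in the interior of the overlap and, by transversality, is (generically) a smooth codimension-one submanifold on $\cM(t)$. I then define $\eta^+:=\min(\eta_\textnormal{close},\eta_\textnormal{far})$ on the overlap, extended by $\eta_\textnormal{close}$ on the inner region $\cM(t)\cap B_{R\sqrt{t}}(\bOh)$ and by $\eta_\textnormal{far}$ on $\cM(t)\setminus B_{2R\sqrt{t}}(\bOh)$. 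This yields a continuous, Lipschitz, piecewise-smooth hypersurface $\Gamma^+_s(t)$ that is smooth away from the seam and agrees with $\Gamma^+_{\textnormal{close},s}(t)$ (resp.\ $\Gamma^+_{\textnormal{far},s}(t)$) on the inner (resp.\ outer) portion.

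Finally, I apply the Meeks--Yau welding \cite{MeeksYau} to smooth the downward corner of $\Gamma^+_s(t)$ in a neighborhood of the seam. The proofs of Lemmas \ref{lemm:inner-barrier} and \ref{lemm:outer-barrier} each supply a uniform strictly positive lower bound on $v_\Gamma(\partial_\tau\bx_\Gamma-\bH_\Gamma+\tfrac{1}{2}\bx_\Gamma)\cdot\nu_\Gamma$, so a small downward mollification of $\Gamma^+_s(t)$ on a shrinking tubular neighborhood of the seam produces a smooth hypersurface whose supersolution inequality is preserved (the mollification error is absorbed into the strict slack). The main obstacle is exactly this last step: verifying that the smoothing can be done uniformly in $t$ without destroying the supersolution property. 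Transversality of the two pieces along the seam (from the strict sign-change of $\eta_\textnormal{close}-\eta_\textnormal{far}$) together with the strictness of the supersolution inequalities on each smooth piece provide the quantitative room required.
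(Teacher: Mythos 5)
Your construction is essentially the intended one: restrict to $t<\min(T_{\delta,R},T_{\textnormal{far}})$ with $\delta\leq\min(\delta_0,\delta_1)$, take the pointwise minimum of the two supersolutions on the overlap annulus, and use Lemma \ref{lemm:subsolns-cross-v2} to see that the seam $\{\eta_{\textnormal{close}}=\eta_{\textnormal{far}}\}$ is compactly contained in the interior of the overlap, so the welded hypersurface contains no boundary points of either piece. This matches how the paper assembles the barrier (compare the definition of $u$ in \eqref{eq:u-barrier} and Proposition \ref{prop:fattening-barrier-final}, where the welding is literally a $\min$).

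The one place you go astray is the final step. The Meeks--Yau welding is \emph{not} a mollification: no smoothing of the corner is performed, and the difficulty you flag as ``the main obstacle'' (smoothing uniformly in $t$ while preserving the supersolution inequality) does not arise. The point is that the notion of supersolution in use here is a comparison statement --- a smooth mean curvature flow cannot touch $\Gamma^+_s(t)$ from below --- and this is automatically inherited by the Lipschitz minimum of two such supersolutions. Indeed, if a classical flow touched $\min(\eta_{\textnormal{close}},\eta_{\textnormal{far}})$ from below at a spacetime point, then since the minimum lies weakly below each piece, the flow would lie below and touch whichever smooth piece realizes the minimum at that point; by the seam containment this contact point is an interior point of that piece, contradicting Lemma \ref{lemm:inner-barrier} or Lemma \ref{lemm:outer-barrier}. (If the contact occurs exactly on the seam, the flow touches \emph{both} pieces from below there, and either one gives the contradiction.) So you should delete the mollification step entirely; it buys nothing and, as you note, would require extra quantitative control. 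Two further minor points: the transversality/genericity of the seam is irrelevant to this argument and need not be arranged; and rather than shrinking $\delta$ to force $T_{\delta,R}\leq T_{\textnormal{far}}$, it is cleaner simply to work on $(0,\min(T_{\delta,R},T_{\textnormal{far}}))$.
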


\section{Uniqueness} \label{sec:unique}

We work with the same set-up as in the previous section.

Assume that we have smooth mean curvature flows $\cM^1$ and $\cM^2$, defined on $(0,T)$ for some $T>0$, with $\cM^1(0)=\cM^2(0) = \cH^n\lfloor M$ and such that $t^{-1/2} \cM^i(t)$ converges smoothly to $\Sigma$ (on compact subsets of $\Rb{n+1}$) as $t\searrow 0$ for $i=1,2$.

\begin{lemma}\label{lem:distance-estimate}
 It holds that $\textnormal{dist}(\cM^1(t), \cM^2(t)) = o(\sqrt{t})$.
\end{lemma}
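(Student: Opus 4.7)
The plan is to use the welded global barriers $\Gamma_s^\pm$ from Proposition~\ref{prop:uniqueness-barrier} (built around $\cM^1$) to sandwich $\cM^2$ into an $O(s\sqrt{t})$-tube around $\cM^1$, and then send $s\searrow 0$. By construction, $\Gamma_s^\pm(t)$ lies within distance $C s\sqrt{t}$ of $\cM^1(t)$ for a constant $C=C(R,\alpha,\Sigma)$ controlled by $\max(f_{3R,\alpha},h)$. Once the sandwich $\Gamma_s^-(t)\le\cM^2(t)\le\Gamma_s^+(t)$ is established on $(0,T_{\delta,R})$ the distance bound $\textnormal{dist}(\cM^1(t),\cM^2(t))\le Cs\sqrt{t}$ follows; taking $s\searrow 0$ then gives the $o(\sqrt{t})$ decay (for any $\varepsilon>0$, choose $s\in(0,s_0)$ with $Cs<\varepsilon$, which yields a time interval on which $\textnormal{dist}(\cM^1(t),\cM^2(t))<\varepsilon\sqrt{t}$).

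For fixed $s\in(0,s_0)$, I would first establish strict separation of $\cM^2$ from $\Gamma_s^\pm$ at an auxiliary time $t_0=t_0(s)>0$ and then propagate forward by the avoidance principle. Strict separation at $t_0$ splits into two regions. Inside $B_{\sqrt{t_0}R}(\bOh)$, both $\cM^i(t_0)$ can be written as normal graphs $\sqrt{t_0}\{\bx+u^i(\bx,t_0)\nu_\Sigma(\bx)\}$ over $\sqrt{t_0}\Sigma_{3R}$, with $\|u^i(\cdot,t_0)\|_{C^3(\Sigma_{3R})}\to 0$ as $t_0\searrow 0$ by the hypothesis that $t^{-1/2}\cM^i(t)\to\Sigma$ smoothly. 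Since $f_{3R,\alpha}$ has a positive lower bound on $\overline{\Sigma_{2R}}$, this forces $|u^1-u^2|<sf_{3R,\alpha}/2$ pointwise there for $t_0$ sufficiently small, placing $\cM^2(t_0)$ strictly between the close barriers. In the far region $\cM^1(t_0)\setminus B_{\sqrt{t_0}R}(\bOh)$, pseudolocality applied at the natural scale $\sim|\bx|$ for $\bx\in M\setminus\{\bOh\}$ (using that $M$ is modelled on a smooth cone, as in the proof of Lemma~\ref{lemm:outer-barrier}) gives an estimate of the form $|\cM^i(t)-M|\lesssim t/|\bx|$, hence $|\cM^1(t_0)-\cM^2(t_0)|\lesssim \sqrt{t_0}/R$ on this region; choosing $R$ sufficiently large relative to $s$ makes this strictly smaller than the far-barrier gap $sh\sqrt{t_0}$, so $\cM^2(t_0)$ lies strictly between the far barriers as well.

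With strict separation at $t_0$ in hand, the fact that $\cM^2$ is a smooth (classical) mean curvature flow and $\Gamma_s^\pm$ are global super/subsolutions to mean curvature flow (welded via the transversal crossing of Lemma~\ref{lemm:subsolns-cross-v2}) means the standard avoidance principle (e.g.\ \cite[Lemma 3.1]{White:topology-weak}, extended across the weld by the Meeks--Yau construction) propagates the sandwich to all $t\in[t_0,T_{\delta,R})$. Sending $t_0\searrow 0$ and using continuity in $t$ of the barriers then extends the sandwich to the full interval $(0,T_{\delta,R})$, completing the argument.

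The step I expect to be the main obstacle is the far-region strict-separation at $t_0$: matching the pseudolocality-driven error $\sim t_0/|\bx|$ at the interface $|\bx|\sim\sqrt{t_0}R$ against the barrier gap $sh\sqrt{t_0}$ forces $R$ to be taken large depending on $s$, which in turn constrains $s_0=s_0(R)$ in Lemma~\ref{lemm:inner-barrier}; verifying that the nested constraints $R\ge R_1$, $\alpha\in(0,\alpha_0(R))$, $\delta\in(0,\delta_1(R,\alpha))$, and $s\in(0,s_0(R,\Sigma))$ can be met simultaneously while still allowing $s$ (and hence the resulting distance bound) to be arbitrarily small is the delicate technical point. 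A secondary but more routine concern is that the Meeks--Yau weld is only $C^0$; the transversal crossing of Lemma~\ref{lemm:subsolns-cross-v2} is what ensures the welded object still acts as a viscosity super/subsolution for the smooth avoidance argument.
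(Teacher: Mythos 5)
Your approach is inverted relative to the paper's logic, and this creates both a circularity and a genuine quantitative gap. In the paper, the barrier sandwich (Proposition \ref{prop:uniqueness-barrier} plus avoidance, then $s\to 0$) is the proof of the \emph{next} result, Proposition \ref{prop:uniqueness}; and that argument \emph{consumes} Lemma \ref{lem:distance-estimate} as its input, precisely to place $\cM^2(t)$ strictly between $\Gamma_s^\pm(t)$ at small initial times. To use the barriers to prove the distance estimate itself, you must establish the initial separation independently — and the direct two-region argument you sketch for ``strict separation at $t_0$'' is, once you run it at every time rather than at one auxiliary time and track quantifiers in $\varepsilon$ rather than in $s$, already a complete proof of the lemma with no barriers needed. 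The paper's actual proof is exactly this, packaged as a three-line compactness argument: if $\bx_i\in\supp\cM^1(t_i)$ with $d(\bx_i,\supp\cM^2(t_i))\geq\kappa\sqrt{t_i}$ and $t_i\to 0$, pass to $\bx_i\to\bx\in M$; if $\bx=\bOh$ the hypothesis that both parabolic rescalings converge smoothly to $\Sigma$ gives a contradiction, and if $\bx\neq\bOh$ short-time smoothness gives that both rescalings $(t_i)^{-1/2}(\cM^j(t_i)-\bx)$ converge to $T_\bx M$, again a contradiction.

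The concrete failure point in your version is the far-region separation, which you correctly flag but do not resolve. Your pseudolocality bound gives $|\cM^1(t_0)-\cM^2(t_0)|\lesssim \sqrt{t_0}/R$ at the interface $|\bx|\sim\sqrt{t_0}R$, and you need this to be smaller than the far-barrier gap $sh\sqrt{t_0}$. Since $h=\max_{\partial\Sigma_R}f_{3R,\alpha}\sim R$, this forces $s\gtrsim R^{-2}$ — i.e.\ for \emph{fixed} $R$ the initial separation fails precisely in the regime $s\searrow 0$ that your final step requires. Taking $R=R(s)\to\infty$ instead is incompatible with the constant hierarchy ($\alpha_0=\alpha_0(R)$, $\delta_0=\delta_0(R,\alpha)$, $s_0=s_0(R,\Sigma)$, with $\mu_{3R}$ decreasing in $R$), so the nested constraints cannot be met while sending $s\to 0$. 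The fix is simply to discard the barrier superstructure for this lemma: for each $\varepsilon>0$ choose $R>C/\varepsilon$ so the far/intermediate region contributes $<\varepsilon\sqrt{t}$ (note $t/|\bx|=o(\sqrt t)$ whenever $|\bx|/\sqrt t\to\infty$), and use the smooth convergence $t^{-1/2}\cM^i(t)\to\Sigma$ on $B_{3R}$ to make the close region contribute $<\varepsilon\sqrt{t}$ for $t$ small. That is the lemma; the barriers belong to the proof of Proposition \ref{prop:uniqueness}.
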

\begin{proof}
Assume that there is $\kappa >0$ and $\bx_i \in \supp \cM^1(t_i)$ with $t_i\to 0$ so that 
\begin{equation}\label{eq:dist-est-cont}
d(\bx_i,\supp\cM^2(t_i)) \geq \kappa \sqrt{t_i}.
\end{equation}
Pass to a subsequence so that $\bx_i\to\bx \in M$. If $\bx=\bOh$ then $(t_i)^{-1/2} \cM^i(t_i)$ both converge to $\Sigma$ in $C^\infty_\textrm{loc}(\RR^{n+1})$. If $\bx\neq \bOh$ then short-time smoothness of $\cM^i(t)$ around $\bx$ gives that\footnote{Note that one can view the $\bx\neq \bOh$ case as the ``same'' as the $\bx=\bOh$ case, since $M$ is modeled on the cone $T_\bx M$ at $\bx\neq \bOh$, which evolves as a static hyperplane under level set flow.} $(t_i)^{-1/2} (\cM^i(t_i)-\bx)$ both converge to $T_\bx M$ in $C^\infty_\textrm{loc}(\RR^{n+1})$. In either case, we see that assuption \eqref{eq:dist-est-cont} cannot hold. This completes the proof.  
 \end{proof}

\begin{prop}\label{prop:uniqueness}
It holds that $\cM^1(t) = \cM^2(t)$ on $[0,T)$.
\end{prop}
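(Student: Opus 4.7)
The strategy is to designate $\cM^1$ as the reference flow and deploy the welded barriers from Proposition \ref{prop:uniqueness-barrier} around it, sandwich $\cM^2(t)$ between $\Gamma^-_s(t)$ and $\Gamma^+_s(t)$ for every small parameter $s\in(0,s_0)$, and then let $s\to 0$. Concretely, fix $\delta>0$ small enough that Proposition \ref{prop:uniqueness-barrier} applies, so that for every $s\in(0,s_0)$ the closed hypersurfaces $t\mapsto\Gamma^\pm_s(t)$ are global super/sub-solutions to mean curvature flow on a common interval $(0,T')$ with $T'=T'(\delta,R,M,\Sigma)>0$ independent of $s$. By construction, $\Gamma^+_s(t)$ (resp.\ $\Gamma^-_s(t)$) lies strictly above (resp.\ below) $\cM^1(t)$ relative to $\nu_{\cM^1(t)}$, with separation bounded below by a positive multiple of $s\sqrt{t}$ on compact subsets.

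The crux of the argument is an \emph{initial trapping} claim: for every $s\in(0,s_0)$ there exists $t_s\in(0,T')$, with $t_s\to 0$ as we shrink the admissible trapping parameter, such that $\cM^2(t_s)$ lies strictly between $\Gamma^-_s(t_s)$ and $\Gamma^+_s(t_s)$. To verify this, split into the two regions where the barriers are defined:
\begin{itemize}
\item \emph{Near region} $B_{2R\sqrt{t_s}}(\bOh)$: by hypothesis, for $i=1,2$, the rescaled flows $t_s^{-1/2}\cM^i(t_s)$ are $C^3$-graphs $u_i(\cdot,t_s)$ over $\Sigma_{2R}$ with $\|u_i(\cdot,t_s)\|_{C^3}\to 0$ as $t_s\searrow 0$. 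Rescaled, $\Gamma^\pm_{\textnormal{close},s}(t_s)/\sqrt{t_s}$ is the graph of $u_1(\cdot,t_s)\pm s f_{3R,\alpha}$ over $\Sigma_{2R}$, and $f_{3R,\alpha}\geq c>0$ on $\Sigma_{2R}$; hence for $t_s$ small enough that $|u_1-u_2|<sc$, the graph of $u_2$ sits strictly between the graphs of $u_1\pm sf_{3R,\alpha}$.
\item \emph{Far region} $\Rb{n+1}\setminus B_{\sqrt{t_s}R}(\bOh)$: here both $\cM^i(t)$ are short-time smooth perturbations of the smooth classical flow emanating from $M\setminus\{\bOh\}$, and Lemma \ref{lem:distance-estimate} gives $\mathrm{dist}(\cM^1(t),\cM^2(t))=o(\sqrt{t})$ uniformly. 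Since the far barrier is the normal graph of $\pm sh\sqrt{t}$ over $\cM^1(t)$ with $h>0$ fixed, for $t_s$ small enough the $o(\sqrt{t_s})$ separation is dominated by the $sh\sqrt{t_s}$ offset.
\end{itemize}
The welding condition from Lemma \ref{lemm:subsolns-cross-v2} ensures that these two descriptions agree on the overlap annulus, so $\cM^2(t_s)$ lies strictly between $\Gamma^-_s(t_s)$ and $\Gamma^+_s(t_s)$ globally.

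Given the initial trapping, since $\cM^2$ is a classical mean curvature flow and $\Gamma^+_s$ (resp.\ $\Gamma^-_s$) is a (possibly piecewise smooth) supersolution (resp.\ subsolution), the standard avoidance principle for classical flows against barriers (cf.\ \cite[Lemma 3.1]{White:topology-weak}) yields $\Gamma^-_s(t)\leq\cM^2(t)\leq\Gamma^+_s(t)$ for all $t\in[t_s,T')$. Since $t_s$ can be chosen arbitrarily small (for fixed $s$), the sandwich holds on $(0,T')$ for every $s\in(0,s_0)$. As $s\searrow 0$, both $\Gamma^\pm_s(t)\to\cM^1(t)$ in Hausdorff distance on compact spacetime subsets with $t>0$, forcing $\cM^2(t)=\cM^1(t)$ on $(0,T')$. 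Finally, standard uniqueness of classical mean curvature flow with smooth initial data propagates this equality to all of $(0,T)$, while the initial condition $\cM^1(0)=\cM^2(0)=\cH^n\lfloor M$ is given. The delicate step is the initial trapping: welding forces one to handle the near region, where the positive lower bound on $f_{3R,\alpha}$ supplies room against the smooth convergence $u_i\to 0$, and the far region, where the $o(\sqrt t)$ from Lemma \ref{lem:distance-estimate} is critical to beat the $sh\sqrt{t}$ barrier offset on the \emph{same} scale.
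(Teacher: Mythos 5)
Your proof is correct and follows essentially the same route as the paper: construct the welded barriers $\Gamma_s^\pm$ over $\cM^1$ via Proposition \ref{prop:uniqueness-barrier}, use Lemma \ref{lem:distance-estimate} to trap $\cM^2(t)$ between them for small $t$, propagate the trapping forward by avoidance, and let $s\to 0$. Your near/far case analysis of the initial trapping simply spells out detail that the paper compresses into a single invocation of Lemma \ref{lem:distance-estimate}.
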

\begin{proof}
By Proposition \ref{prop:uniqueness-barrier} we can construct for all $s\in (0,1)$ and $t\in (0, T_{\delta, R})$ the supersolution $t\mapsto \Gamma_{s}^+(t)$ and subsolution $t\mapsto \Gamma_{s}^-(t)$ over $\cM^1$. Note that for any $s\in (0,s_0)$, we have by Lemma \ref{lem:distance-estimate} that $\cM^2(t)$ lies between $\Gamma_s^+(t)$ and $\Gamma_s^-(t)$ for $t>0$ sufficiently small. This yields for all $s\in (0,s_0)$ and $t\in (0, T_{\delta, R})$ that $\cM^2(t)$ lies between $\Gamma_s^+(t)$ and $\Gamma_s^-(t)$. Since for all $t\in (0, T_{\delta, R})$ we have that $\Gamma_s^+(t), \Gamma_s^-(t) \to \cM^1(t)$ as $s\to 0$ this yields that $\cM^1$ coincides with $\cM^2$ on $(0, T_{\delta, R})$. Thus $\cM^1$ and $\cM^2$ have to coincide (at least as long as they both remain smooth).
\end{proof}

\begin{corollary}\label{coro:non-fat}
If $\cC$ does not fatten under the level set flow, then the inner and outer flows agree for $t \in [0,T]$.
\end{corollary}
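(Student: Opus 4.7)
The plan is to reduce the corollary to Proposition \ref{prop:uniqueness} applied to the outer Brakke flow $\cM$ and the inner Brakke flow $\cM'$ constructed from $M$ via elliptic regularization, after which the uniqueness assertion from Proposition \ref{prop:inner-outermost-flow} upgrades the statement to one about the inner and outer (level set) flows of $M$.

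First I would invoke Theorem \ref{ccmsout}: since $\cC$ is assumed not to fatten, the outer and inner expanders coincide, $\Sigma=\Sigma'$, and $\Gap(\cC)$ is the single smooth expander $\Sigma$. Next, by Theorem \ref{theo:fattening}, the rescalings $\cD_\lambda(\cM)$ and $\cD_\lambda(\cM')$ both converge as Brakke flows to $\cM_\Sigma$ as $\lambda\to\infty$. Combining this with the short-time smoothness provided by Corollary \ref{coro:smoothness}, which gives $T>0$ such that $\cM\lfloor\{t<T\}$ and $\cM'\lfloor\{t<T\}$ are smooth classical flows, I would upgrade the Brakke convergence at $(\bOh,0)$ to smooth convergence of $t^{-1/2}\cM(t)$ and $t^{-1/2}\cM'(t)$ to $\Sigma$ on compact subsets of $\RR^{n+1}$. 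Away from $\bOh$ this is already classical (each flow is smoothly tangent to $M$), and near $\bOh$ it follows from Brakke's regularity theorem applied at scales where density is close to $1$ (which holds by the forward blow-up) together with standard parabolic interior estimates.

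With smooth convergence in hand, the hypotheses of Proposition \ref{prop:uniqueness} are satisfied by the pair $(\cM^1,\cM^2):=(\cM,\cM')$: both are smooth mean curvature flows on $(0,T)$ with $\cM^i(0)=\cH^n\lfloor M$ and both satisfy $t^{-1/2}\cM^i(t)\to \Sigma$ smoothly on compact sets as $t\searrow 0$. Proposition \ref{prop:uniqueness} then yields $\cM(t)=\cM'(t)$ for all $t\in[0,T)$ (shrinking $T$ if necessary so that $T\leq T_{\delta,R}$). By Proposition \ref{prop:inner-outermost-flow}, the spacetime supports of $\cM$ and $\cM'$ coincide with the outer and inner flows $M(t)$, $M'(t)$ of $M$ respectively, so equality of the Brakke flows transfers to equality of the inner and outer flows on $[0,T]$.

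The only real obstacle is the upgrade from weak Brakke convergence to smooth convergence of $t^{-1/2}\cM(t),t^{-1/2}\cM'(t)$ to $\Sigma$; this is what is needed to apply Proposition \ref{prop:uniqueness} rather than a weaker statement. Once that is established, everything else is essentially bookkeeping: Theorem \ref{ccmsout} forces the common blow-up expander, Theorem \ref{theo:fattening} identifies the forward blow-up of each side, Corollary \ref{coro:smoothness} gives classical smoothness on a common short time interval, and Proposition \ref{prop:uniqueness} supplies the pinching barriers $\Gamma_s^\pm$ that force the two smooth flows to coincide as $s\searrow 0$.
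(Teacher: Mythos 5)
Your proposal is correct and follows essentially the same route as the paper: non-fattening of $\cC$ gives $\Sigma=\Sigma'$, Theorem \ref{theo:fattening} shows both outer and inner flows are modeled on $\Sigma$ at $(\bOh,0)$, and Proposition \ref{prop:uniqueness} then forces them to coincide. The paper's three-line proof leaves implicit the two points you spell out — upgrading the Brakke-flow blow-up convergence to smooth convergence of $t^{-1/2}\cM(t)$ via Corollary \ref{coro:smoothness} and local regularity, and transferring equality of the Brakke flows to the inner/outer level set flows via Proposition \ref{prop:inner-outermost-flow} — and both are filled in correctly.
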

\begin{proof}
If $\cC$ does not fatten then $\Sigma=\Sigma'$. Thus, by Theorem \ref{theo:fattening}, the outer and inner flows $\cM,\cM'$ are both modeled on $\Sigma$ near $(\bOh,0)$. The assertion then follows from Proposition \ref{prop:uniqueness}. 
\end{proof}

\begin{appendix} 
\section{Graphs over expanders}\label{sec:graphs}
We consider a smooth embedded hypersurface $M \subset \RR^{n+1}$ and assume that $\nu_M$ is a choice of smooth unit normal vector field to $M$. We define its \emph{shape operator} (or Weingarten map)
\begin{equation}\label{eq:convent.shape.oper}
S_p: T_pM \to T_pM, \qquad \xi \mapsto - D_\xi\nu_\Sigma 
\end{equation}
and its second fundamental form
\begin{equation}\label{eq:convent.sff}
 A: T_pM \times T_pM \to \RR,\qquad (\xi,\zeta) \mapsto A(\xi,\zeta) = S_p(\xi) \cdot \zeta\, .
 \end{equation}
We fix the sign of the \emph{scalar mean curvature} $H$ as follows
$$ \bH = H\, \nu_\Sigma\, ,$$
and thus $H = \tra S  = \tra A $, with the principal curvatures of $M$ being the eigenvalues of $S$. We  consider  
 $u : M\times I\to \RR$ so that
\[
|u||A_M| < \eta < 1
\]
along $M\times I$, where $A$ is the second fundamental form of $\Sigma$. This allows us to define the graph
\[
\Gamma_\tau  : =\{ \bx + u(\bx,\tau) \nu_M(\bx): \bx \in M\}. 
\]
We compute here various geometric quantities associated to $\Gamma_\tau$. The computations follow directly as in \cite[Appendix A]{ccs23}. There the focus was on the backwards rescaled flow, but the computations for the forwards rescaled flow are completely analogous and just amount to changing sign in front of the corresponding terms.

Define
\begin{equation}\label{app:graphical-eqns-defn-v}
v(\bx, \tau) = (1+|(\Id - u S_M)^{-1}(\nabla_M u)|^2)^{\frac 12}. 
\end{equation}
\begin{lemma}[{\cite[Lemma A.1]{ccs23}}]
The upwards pointing normal along $\Gamma_\tau$ is
\begin{equation}\label{eq:app-unit-normal-graph}
\nu_{\Gamma} = v^{-1}(-(\Id - u S_M)^{-1}\nabla_M u + \nu_M).
\end{equation}
In particular
\begin{equation}\label{eq:app-defi-vdotv-v}
v = (\nu_{\Gamma} \cdot \nu_M)^{-1}. 
\end{equation}
\end{lemma}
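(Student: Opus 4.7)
The plan is to verify the formula by direct computation, using the parametrization $F(\bx) = \bx + u(\bx,\tau)\nu_M(\bx)$ of $\Gamma_\tau$, and then to check unit length and orientation.

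First I would compute the differential: for $\xi \in T_\bx M$,
\[
 dF(\xi) = \xi + (\nabla_M u \cdot \xi)\, \nu_M + u\, D_\xi \nu_M = (\Id - u S_M)(\xi) + (\nabla_M u \cdot \xi)\, \nu_M,
\]
using the convention \eqref{eq:convent.shape.oper}. The hypothesis $|u||A_M| < \eta < 1$ ensures that $\Id - u S_M$ is invertible and symmetric on $T_\bx M$, so its inverse is likewise symmetric.

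Next I would show that the ansatz $N := -(\Id - u S_M)^{-1}\nabla_M u + \nu_M$ is orthogonal to $dF(\xi)$ for every $\xi$. The tangential contribution gives
\[
-(\Id - u S_M)^{-1}\nabla_M u \cdot (\Id - u S_M)(\xi) = -\nabla_M u \cdot \xi
\]
by symmetry, while the normal contribution gives $\nu_M \cdot (\nabla_M u \cdot \xi)\nu_M = \nabla_M u \cdot \xi$; these cancel, so $N \perp T\Gamma_\tau$. Its length satisfies
\[
|N|^2 = |(\Id - u S_M)^{-1}\nabla_M u|^2 + 1 = v^2,
\]
by the definition \eqref{app:graphical-eqns-defn-v}, which gives the stated formula \eqref{eq:app-unit-normal-graph} upon normalizing. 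The ``upwards'' orientation is checked by setting $u=0$: one recovers $\nu_\Gamma = \nu_M$.

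Finally, formula \eqref{eq:app-defi-vdotv-v} follows by dotting \eqref{eq:app-unit-normal-graph} with $\nu_M$: the tangential piece drops out and one reads off $\nu_\Gamma \cdot \nu_M = v^{-1}$. There is no real obstacle here; the only small care required is the symmetry of $\Id - u S_M$ (and of its inverse) with respect to the induced metric on $M$, which is what makes the cancellation in the orthogonality computation work.
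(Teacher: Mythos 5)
Your computation is correct and is the standard direct verification (compute $dF(\xi)=(\Id-uS_M)(\xi)+(\nabla_M u\cdot\xi)\nu_M$, check the ansatz is orthogonal using the self-adjointness of $(\Id-uS_M)^{-1}$, normalize, and dot with $\nu_M$); the paper itself gives no proof but cites \cite[Lemma A.1]{ccs23}, where essentially this same argument appears. Nothing is missing.
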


For $\ell \in \{0,1\}$ set
\begin{equation}\label{eq:defn-sigma-x-app}
\sigma_{\ell}(\bx,\tau) : = \sum_{j=0}^\ell \sum_{k=0}^{4-2j} |\partial_\tau^j\nabla^k u(\bx,\tau)|. 
\end{equation}

\begin{lemma}[{\cite[Lemma A.2]{ccs23}}]\label{lemm:expand-H-app}
The mean curvature of $\Gamma_\tau$ at $\bx + u(\bx,\tau) \nu_M(\bx)$ satisfies
\begin{equation}
v(\bx,\tau) H_{\Gamma}(\bx + u(\bx,\tau) \nu_M(\bx),\tau) = H_M(\bx) +  (\Delta_M u + |A_M |^2 u)(\bx) + E^H
\end{equation}
where the error $E^H$ can be decomposed into terms of the form 
\[
E^H = u E^H_1  + E^H_2(\nabla_M u,\nabla_M u) 
\]
where $E^H_1 \in C^\infty(\Sigma)$ and $E^H_2 \in C^\infty(\Sigma ; T^*M \otimes T^*M)$ satisfy the following estimates: 
\[
|\partial_\tau  E_1^H(\bx,\tau)| \leq C_1^H\sigma_1(\bx,\tau),\qquad  \sum_{k=0}^2 |\nabla^k_\Sigma E_1^H(\bx,\tau)|  \leq C_1^H \sigma_0(\bx,\tau)
\]
and\footnote{recall that $E_2$ is a section of $T^*\Sigma\otimes T^*\Sigma$ so e.g., $\nabla_\Sigma E_2$ is a section of $T^*\Sigma\otimes T^*\Sigma \otimes T^*\Sigma$}
\[
|\partial_\tau  E_2^H(\bx,t)| \leq C_2^H (1+\sigma_1(\bx,\tau)),\qquad \sum_{k=0}^2 |\nabla^k E_2^H(\bx,t)|  \leq C_2^H (1+ \sigma_0(\bx,\tau))
\]
where $C_1^H,C_2^H$ depend only on $\eta$ and an upper bound for $\sum_{k=0}^3 |\nabla^k A|(\bx)$. 
\end{lemma}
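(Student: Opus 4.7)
The plan is to prove this identity by a direct parameterization calculation, in the spirit of the analogous computation for the rescaled flow carried out in \cite[Appendix A]{ccs23} (at the level of pure mean curvature, the forward and backward rescalings differ only by a sign in the drift term, so the same algebra applies). Parameterize $\Gamma_\tau$ by $F(\bx,\tau) = \bx + u(\bx,\tau)\nu_M(\bx)$. The first step is to compute, using $D_\xi \nu_M = -S_M(\xi)$,
$$ F_* \xi = (\Id - u S_M)\xi + (\nabla_M u \cdot \xi)\,\nu_M, $$
from which the induced metric reads
$$ g_\Gamma(\xi,\zeta) = g_M((\Id - u S_M)\xi,\,(\Id - u S_M)\zeta) + (\nabla_M u \cdot \xi)(\nabla_M u \cdot \zeta). $$
Together with the formulas \eqref{eq:app-unit-normal-graph} and \eqref{eq:app-defi-vdotv-v} for $\nu_\Gamma$ and $v$, this provides the inputs for the second fundamental form.

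The second step is to differentiate $F_*$ once more and take the component along $\nu_\Gamma$. After tangential/normal decomposition, one obtains
$$ A_\Gamma(F_*\xi, F_*\zeta) = v^{-1}\Big( A_M(\xi,\zeta) - u\, (S_M^2\xi)\cdot\zeta + (\mathrm{Hess}_M u)(\xi,\zeta) + \mathcal{R}(\xi,\zeta)\Big), $$
where $\mathcal{R}$ collects terms at least quadratic in the pair $(u, \nabla_M u)$ (arising from pairing the tangential part of $D_{F_*\xi} F_*\zeta$ with the tangential tilt of $\nu_\Gamma$ in \eqref{eq:app-unit-normal-graph}). Tracing with $g_\Gamma^{-1}$, which admits a power series expansion $g_M^{-1} + O(u) + O(|\nabla_M u|^2)$ as endomorphisms of $T_\bx M$, and multiplying through by $v$, the leading contributions assemble exactly to $H_M + \Delta_M u + |A_M|^2 u$; everything else is absorbed into $E^H$.

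The third step is to organize $E^H = v H_\Gamma - H_M - \Delta_M u - |A_M|^2 u$ into the asserted form $u E_1^H + E_2^H(\nabla_M u, \nabla_M u)$. The key structural observation is that when $u \equiv 0$ one has $F = \Id$ and hence $E^H \equiv 0$; every monomial in $E^H$ must therefore be divisible either by $u$ or by $\nabla_M u$. Next, a term linear in $\nabla_M u$ (with no $u$ factor) would survive at any point where $u$ and $\nabla_M^2 u$ vanish but $\nabla_M u \neq 0$; at such a point both $v = (1 + |\nabla_M u|^2)^{1/2}$ and $A_\Gamma$ contain no linear-in-$\nabla_M u$ corrections (the former is even in $\nabla_M u$, the latter picks up only terms proportional to $A_M$ or $\mathrm{Hess}_M u$), so such a term cannot appear. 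Every remaining monomial is therefore divisible by $u$ or by at least two factors of $\nabla_M u$, which (absorbing any further powers of $\nabla_M u$ into the coefficient) yields the decomposition.

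The main obstacle will be the bookkeeping in the third step: producing explicit tensor-valued expressions for $E_1^H$ and $E_2^H$, depending smoothly on $\bx$, $u$, $\nabla_M u$, in a form that makes the derivative bounds transparent. Once these expressions are in hand, the estimates follow routinely: the bound $|u||A_M| < \eta < 1$ gives uniform control on $(\Id - u S_M)^{-1}$ and $v^{-1}$ together with all their derivatives with respect to $u$ and $\bx$; the hypothesis $\sum_{k \leq 3}|\nabla^k A_M| \leq C$ controls all intrinsic geometric coefficients appearing when one commutes covariant derivatives past $\mathrm{Hess}_M u$; and the stated bounds in terms of $\sigma_0, \sigma_1$ from \eqref{eq:defn-sigma-x-app} follow by counting how many derivatives of $u$ appear in each term of $\nabla_M^k E_j^H$ and $\partial_\tau E_j^H$ (namely at most two spatial and one temporal derivative beyond the baseline factors of $u$ or $\nabla_M u$ already isolated).
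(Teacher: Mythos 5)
Your proposal is correct and follows essentially the same route as the paper, which does not reprove this lemma but imports it verbatim from \cite[Lemma A.2]{ccs23}, where exactly this direct graphical computation (pushforward of the frame, induced metric, normal component of the second derivatives of the graph parameterization, trace against $g_\Gamma^{-1}$, isolation of the linearization $\Delta_M u + |A_M|^2 u$) is carried out. The one caveat is that your step-three test point (where $u=\nabla^2_M u=0$ but $\nabla_M u\neq 0$) does not by itself exclude monomials proportional to $\nabla_M u\otimes\nabla^2_M u$, which would also violate the claimed decomposition; these are in fact absent because the tangential part of the derivative of the pushed-forward frame contains no Hessian terms, a fact already visible in your explicit second-step formula, so the gap is only in the heuristic and not in the computation you propose to carry out.
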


Observe that the expander mean curvature can be written as
\[
\bH - \frac{\bx^\perp}{2} = \left(H - \frac 12  \bx \cdot \nu_\Gamma \right) \nu_\Gamma\, 
\]
and recall the definition of the (expander) Jacobi operator $L$, see \eqref{eq:definiton-Jacobi}. For the following we assume that $M = \Sigma$, where $\Sigma$ is an (open subset of an) expander.

\begin{prop}[{\cite[Corollary A.4]{ccs23}}]\label{prop:expand-rescaled-mcf-app}
We have
\begin{align*}
 v(\bx,\tau) (\partial_{\tau}\bx_{\Gamma} - \bH +  \tfrac 12 \bx_{\Gamma}) \cdot \nu_{\Gamma} & = \partial_{\tau}u  - {\big (\underbrace{\Delta u + \tfrac 12 \bx^T \cdot \nabla u + ( |A|^2 - \tfrac 12) u}_{=L u} \big) }+ E 
\end{align*}
at $\bx_{\Gamma} = \bx + u(\bx,\tau)\nu_\Sigma(\bx)$ for  $E=uE_1 + E_2(\nabla u,\nabla u)$ for $E_1,E_2$ satisfying
\[
|\partial_\tau  E_1(\bx,\tau)| \leq C_1\sigma_1(\bx,\tau), \qquad \sum_{k=0}^2 |\nabla^k E_1(\bx,\tau)|  \leq C_1\sigma_0(\bx,\tau)
\]
and
\[
|\partial_\tau  E_2(\bx,\tau)| \leq C_2(1+\sigma_1(\bx,\tau)),\qquad \sum_{k=0}^2 |\nabla^k E_2(\bx,\tau)|  \leq C_2 (1+ \sigma_0(\bx,\tau))
\]
where $C_1,C_2$ depend only on $\eta$ and an upper bound for $\sum_{k=0}^3 |\nabla^k_\Sigma A|(\bx)$. 

In particular, if $\Gamma_\tau$ is a solution to rescaled mean curvature flow, i.e.,
\[
(\partial_\tau \bx)^\perp = \bH - \tfrac 12\bx^\perp
\]
then 
\begin{equation}\label{eq:error-term-linearize}
\partial_\tau u = Lu + E
\end{equation}
for $E$ as above. 
\end{prop}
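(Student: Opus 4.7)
The plan is to combine the mean curvature expansion from Lemma \ref{lemm:expand-H-app} with direct computations of the remaining two pieces of the rescaled normal speed, namely $\partial_\tau \bx_{\Gamma}\cdot \nu_\Gamma$ and $\bx_\Gamma \cdot \nu_\Gamma$, and then use the expander equation on $\Sigma$ to collapse the ambient terms onto the Jacobi operator $L$.

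First I would compute the time-derivative contribution. Since $\bx_\Gamma(\bx,\tau) = \bx + u(\bx,\tau)\nu_\Sigma(\bx)$ with $\bx$ held fixed, we have $\partial_\tau \bx_\Gamma = (\partial_\tau u)\,\nu_\Sigma$. Pairing against the normal formula \eqref{eq:app-unit-normal-graph} and noting that $\nu_\Sigma$ is orthogonal to the tangential piece $(\Id - uS_\Sigma)^{-1}\nabla_\Sigma u$ yields the clean identity $v\,\partial_\tau \bx_\Gamma \cdot \nu_\Gamma = \partial_\tau u$ with no error at all.

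Next I would compute the position contribution. Splitting $\bx_\Gamma = \bx + u\nu_\Sigma$ and pairing against \eqref{eq:app-unit-normal-graph} gives
\[
v\,\bx_\Gamma \cdot \nu_\Gamma = -\bx^T \cdot (\Id - uS_\Sigma)^{-1}\nabla_\Sigma u + \bx \cdot \nu_\Sigma + u,
\]
where $\bx^T$ denotes the projection of $\bx$ onto $T_\bx \Sigma$ (the normal part of $\bx$ annihilates the tangential piece of $\nu_\Gamma$). Since $|u||A_\Sigma| < \eta < 1$, the Neumann series $(\Id - uS_\Sigma)^{-1} = \Id + uS_\Sigma + u^2 S_\Sigma^2 + \cdots$ converges, so
\[
-\bx^T \cdot (\Id - uS_\Sigma)^{-1}\nabla_\Sigma u = -\bx^T \cdot \nabla_\Sigma u + E'(\nabla_\Sigma u),
\]
where $E'$ is of the form $u\,E_2'(\nabla_\Sigma u)$ with $E_2'$ controlled by an upper bound on $|\bx|\sum_k |\nabla^k A_\Sigma|$.

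Now I would assemble: combining the three pieces together with Lemma \ref{lemm:expand-H-app} gives
\[
v(\partial_\tau \bx_\Gamma - \bH + \tfrac12 \bx_\Gamma)\cdot\nu_\Gamma = \partial_\tau u - H_\Sigma - \Delta_\Sigma u - |A_\Sigma|^2 u + \tfrac12 \bx\cdot\nu_\Sigma - \tfrac12 \bx^T \cdot \nabla_\Sigma u + \tfrac12 u + (E' - E^H).
\]
The expander equation $H_\Sigma = \tfrac12 \bx \cdot \nu_\Sigma$ for $\Sigma$ cancels the $H_\Sigma$ and $\tfrac12 \bx \cdot \nu_\Sigma$ contributions exactly, and the remaining terms reorganize into $\partial_\tau u - Lu + E$ with $L$ as defined in \eqref{eq:definiton-Jacobi}. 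The final assertion \eqref{eq:error-term-linearize} then follows by setting the left-hand side equal to zero, which is precisely the rescaled mean curvature flow equation expressed through the normal speed.

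The main obstacle is bookkeeping the error estimates for $E_1$ and $E_2$, particularly the $\partial_\tau$ estimate. The error inherits contributions both from $E^H$ in Lemma \ref{lemm:expand-H-app} and from the Neumann series tail, each tail term carrying at least one factor of $u$ paired either with $u$ alone (contributing to $uE_1$) or with $\nabla_\Sigma u$ against a power of $S_\Sigma$ (contributing to $E_2(\nabla u,\nabla u)$). The time-derivative bound requires differentiating $(\Id - uS_\Sigma)^{-1}$ via $\partial_\tau (\Id - uS_\Sigma)^{-1} = (\Id - uS_\Sigma)^{-1}(\partial_\tau u)S_\Sigma(\Id - uS_\Sigma)^{-1}$, which is what produces the $\sigma_1$ factor in the stated estimates, while the $\nabla^k$ bounds come from differentiating $S_\Sigma$ at most twice and using the bound on $\sum_{k=0}^3 |\nabla^k A_\Sigma|$.
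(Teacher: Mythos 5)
Your computation is correct and is essentially the argument behind the statement: the paper gives no proof here but imports it from \cite[Appendix A]{ccs23} (modulo the sign changes for the forward rescaling), where the corollary is obtained exactly as you do, by combining the mean-curvature expansion of Lemma \ref{lemm:expand-H-app} with the direct identities $v\,\partial_\tau\bx_\Gamma\cdot\nu_\Gamma=\partial_\tau u$ and $v\,\bx_\Gamma\cdot\nu_\Gamma=-\bx^T\cdot(\Id-uS_\Sigma)^{-1}\nabla_\Sigma u+\bx\cdot\nu_\Sigma+u$, and then cancelling $H_\Sigma$ against $\tfrac12\bx\cdot\nu_\Sigma$ via the expander equation. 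Your side remark that the Neumann tail of the $\bx^T$-term carries a factor of $|\bx|$ not literally covered by the stated dependence of $C_1,C_2$ is fair but harmless, since the proposition is only applied on compact pieces of $\Sigma$ or with $\nabla u\equiv 0$.
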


We also need to understand the linearization of the expander mean curvature of two graphs over $\Sigma$, relative to each other and relative to the base $\Sigma$.

\begin{lemma}[{\cite[Lemma A.7]{ccs23}}] \label{lemm:relative-expander-mean-curvature} For $\delta < \tfrac 12 (\sup_\Omega |A_\Sigma|)^{-1}$ and  $u_i \in C^\infty(\Sigma), i = 0,1$ with 
\begin{equation}\label{eq:C2-bound}
 |u_i(\bx)|+ |\nabla_\Sigma u_i(\bx)| + |\nabla^2_\Sigma u_i(\bx) | + |\nabla^3_\Sigma u_i(\bx) | \leq \delta
\end{equation}
for all $\bx \in \Sigma$. Letting
 $$\Gamma_i:= \{ \bx + u_i(\bx) \nu_\Sigma(\bx): \bx \in \Sigma\}$$
 then denoting $w =u_1-u_0$ and $v_i:= (\nu_{\Gamma_i} \cdot \nu_\Sigma)^{-1}$ there exists $C=C(\sup_{\Sigma} |A_{\Sigma}| + |\nabla A_{\Sigma}|+ |\nabla^2 A_{\Sigma}|)$ such that
 \begin{equation*}
  v_1\Big(H_{\Sigma_1} - \tfrac 12 \bx_{\Sigma_1} \cdot \nu_{\Sigma_1}\Big) - v_0\Big(H_{\Sigma_0}- \tfrac 12\bx_{\Sigma_0}\cdot \nu_{\Sigma_0}\Big)  
  = L_\Sigma w + E^w
 \end{equation*}
 where $\bx_{\Sigma_i} = \bx + u_i(\bx)\nu_\Sigma(\bx)$ and the error term $E$ satisfies
 $$ E^w(\bx) = w(\bx) F(\bx) + \nabla w(\bx) \cdot \bF(\bx) + \nabla^2 w(\bx) \cdot \mathcal{F}(\bx) \, ,$$
 with the estimate
 $$|F| + |\bF| + |\cF| + |\nabla \cF| \leq C\delta$$
 for all $\bx \in \Sigma$.  \end{lemma}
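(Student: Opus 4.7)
The plan is to view the quantity
\[
\Phi(u) \;:=\; v(u)\Bigl(H_{\Sigma(u)} - \tfrac 12\,\bx_{\Sigma(u)}\cdot \nu_{\Sigma(u)}\Bigr)
\]
as a smooth nonlinear second-order operator acting on functions $u:\Sigma\to\RR$ with $|u|\,|A_\Sigma|<\eta<1$, where $\Sigma(u)$ denotes the normal graph of $u$ over $\Sigma$. The graphical formulas of Lemma~\ref{lemm:expand-H-app} exhibit $\Phi$ as a quasilinear operator: its coefficients are smooth functions of $(u,\nabla u)$ through $(\Id - u S_\Sigma)^{-1}$ and $v$, and they act on $\nabla^2 u$ through a perturbation of the induced Laplacian. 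In particular, the Fr\'echet derivative of $\Phi$ at a base point $u$ in direction $w$ is a linear second-order operator
\[
D\Phi|_u[w] \;=\; \alpha(u,\nabla u,\nabla^2 u)\,w + \beta(u,\nabla u,\nabla^2 u)\cdot \nabla w + \gamma(u,\nabla u,\nabla^2 u)\cdot \nabla^2 w,
\]
with scalar $\alpha$, vector $\beta$, and symmetric $(0,2)$-tensor $\gamma$ depending smoothly on their arguments and on the local geometry of $\Sigma$. At $u=0$, the time-independent specialization of Proposition~\ref{prop:expand-rescaled-mcf-app} (i.e.\ $\partial_\tau u \equiv 0$) gives $\Phi(0)=0$ (since $\Sigma$ is an expander) and $D\Phi|_0 = L_\Sigma$.

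Next I would apply the fundamental theorem of calculus along the straight-line homotopy $t \mapsto u_0 + tw$ from $u_0$ to $u_1$:
\[
\Phi(u_1)-\Phi(u_0) \;=\; \int_0^1 D\Phi|_{u_0+tw}[w]\,dt \;=\; L_\Sigma w + \int_0^1 \bigl(D\Phi|_{u_0+tw}-D\Phi|_0\bigr)[w]\,dt.
\]
Reading off the $w$, $\nabla w$, $\nabla^2 w$ components puts the remainder in the required form $E^w = wF + \nabla w\cdot \bF + \nabla^2 w\cdot \cF$ with
\[
F(\bx)=\int_0^1\!\bigl[\alpha(u_0+tw,\nabla(u_0+tw),\nabla^2(u_0+tw))-\alpha(0,0,0)\bigr]\,dt
\]
evaluated at $\bx\in\Sigma$, and analogous integral expressions for $\bF$ and $\cF$. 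Since $\alpha,\beta,\gamma$ are smooth in their arguments and the $C^2$ bound on $u_0,u_1$ controls $|u_0+tw|+|\nabla(u_0+tw)|+|\nabla^2(u_0+tw)|\leq\delta$, a mean-value estimate yields $|F|+|\bF|+|\cF|\leq C\delta$ with $C$ depending on $\eta$ and on the $C^1$ norm of $A_\Sigma$.

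For the $|\nabla\cF|$ bound I would differentiate the integrand $\gamma(u_0+tw,\nabla(u_0+tw),\nabla^2(u_0+tw))-\gamma(0,0,0)$ covariantly along $\Sigma$: the chain rule produces terms proportional to $\nabla^j(u_0+tw)$ for $j\leq 3$ (controlled by $\delta$ thanks to the $C^3$ hypothesis) together with terms involving one additional covariant derivative of the coefficient $\gamma$ in its base-point dependence, which introduces one further derivative of $A_\Sigma$. Combining these estimates yields $|\nabla\cF|\leq C\delta$ with $C$ now depending on $\sup_\Sigma(|A_\Sigma|+|\nabla A_\Sigma|+|\nabla^2 A_\Sigma|)$. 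The main obstacle is purely bookkeeping: one must verify that $\alpha,\beta,\gamma$ are genuinely jointly smooth in $(u,\nabla u,\nabla^2 u)$ on the admissible open set $\{|u|\,|A_\Sigma|<\eta<1\}$, which reduces to inspecting the explicit formulas of Lemma~\ref{lemm:expand-H-app} and verifying that $(\Id-uS_\Sigma)^{-1}$ and $v=(1+|(\Id-uS_\Sigma)^{-1}\nabla u|^2)^{1/2}$ remain smooth under the smallness assumption.
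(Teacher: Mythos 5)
Your interpolation argument --- linearizing the graphical expander-mean-curvature operator $\Phi$ along the straight-line path $u_0+tw$ and writing $F,\bF,\cF$ as integrals of the deviation of the linearized coefficients from their values at $u=0$ (where $D\Phi|_0=L_\Sigma$ by Proposition \ref{prop:expand-rescaled-mcf-app}) --- is correct and is essentially the proof of \cite[Lemma A.7]{ccs23} to which this paper defers, the paper itself only remarking that the shrinker computation carries over with a sign change. The bookkeeping you flag at the end is indeed the whole content: the coefficient of $\nabla^2 w$ depends only on $(u,\nabla u)$ through $(\Id-uS_\Sigma)^{-1}$ and $v$, so the $C^3$ bound \eqref{eq:C2-bound} on the $u_i$ together with two derivatives of $A_\Sigma$ is exactly what is needed for $|\nabla\cF|\leq C\delta$.
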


\end{appendix}

\bibliographystyle{alpha}
\bibliography{UniEvo}
\end{document}